\documentclass[11pt,reqno]{amsart}

\usepackage{amsthm,amssymb}
\usepackage[pagebackref,colorlinks,linkcolor=red,citecolor=blue,urlcolor=blue,hypertexnames=false]{hyperref}
\usepackage{amsrefs}
\usepackage{amscd}

\input{xy}
\xyoption{all}
\newdir{ >}{{}*!/-5pt/@{>}}

\numberwithin{equation}{subsection}

\theoremstyle{plain}
\newtheorem{thm}[equation]{Theorem}
\newtheorem{prop}[equation]{Proposition}
\newtheorem{coro}[equation]{Corollary}
\newtheorem{lem}[equation]{Lemma}

\theoremstyle{definition}
\newtheorem{defi}[equation]{Definition}

\theoremstyle{remark}
\newtheorem{nota}[equation]{Notation}
\newtheorem{rem}[equation]{Remark}

\newtheorem*{ack}{Ackknowledgements}
\theoremstyle{definition}
\newtheorem{exa}[equation]{Example}
\newtheorem{exas}[equation]{Examples}

\DeclareMathOperator{\hofi}{hofi}
\newcommand{\triqui}{\vartriangleleft}
\newcommand{\troqui}{\vartriangleright}
\newcommand{\ass}{\C-\mathrm{Alg}}
\newcommand{\bass}{\mathrm{BAlg}}
\newcommand{\inc}{\mathrm{inc}}
\newcommand{\conj}{\mathrm{conj}}
\renewcommand{\inf}{\mathrm{inf}}
\newcommand{\ninf}{\mathrm{ninf}}
\renewcommand{\top}{\mathrm{top}}

\newcommand{\nil}{\mathrm{nil}}
\newcommand{\dif}{\mathrm{dif}}
\newcommand{\alg}{\mathrm{alg}}

\newcommand*{\colim}{\mathop{\mathrm{colim}}}

\newcommand{\iso}{\overset\cong\longrightarrow}
\newcommand{\ul}{\underline}


\newcommand{\bpf}{\begin{proof}}
\newcommand{\epf}{\end{proof}}
\newcommand{\bprop}{\begin{prop}}
\newcommand{\eprop}{\end{prop}}
\newcommand{\bthm}{\begin{thm}}
\newcommand{\ethm}{\end{thm}}
\newcommand{\brk}{\begin{rem}}
\newcommand{\erk}{\end{rem}}
\newcommand{\bdefi}{\begin{defi}}
\newcommand{\edefi}{\end{defi}}
\newcommand{\blemma}{\begin{lem}}
\newcommand{\elemma}{\end{lem}}
\newcommand{\bclly}{\begin{coro}}
\newcommand{\eclly}{\end{coro}}
\newcommand{\bnota}{\begin{nota}}
\newcommand{\enota}{\end{nota}}

\newcommand{\be}{\begin{enumerate}}
\newcommand{\ee}{\end{enumerate}}
\newcommand{\topdf}{\texorpdfstring}

\newcommand{\comment}[1]{}
\newcommand{\ab}{\mathfrak{Ab}}

\newcommand{\elli}{\ell^\infty}
\newcommand{\ellp}{\ell^p}
\newcommand{\ellq}{\ell^q}

\newcommand{\Ell}{\mathcal{L}}
\newcommand{\Ellp}{\mathcal{L}^p}

\newcommand{\cA}{\mathcal{A}}
\newcommand{\cB}{\mathcal{B}}

\newcommand{\cD}{\mathcal{D}}

\newcommand{\cJ}{\mathcal{J}}
\newcommand{\cK}{\mathcal{K}}
\newcommand{\cI}{\mathcal{I}}

\newcommand{\cM}{\mathcal{M}}

\newcommand{\ran}{\mbox{ran}}

\newcommand{\sran}{\mbox{\tiny{ran}}}
\newcommand{\dom}{\mbox{dom}}
\newcommand{\sdom}{\mbox{\tiny{dom}}}
\newcommand{\diag}{\mathrm{diag}}
\newcommand{\emb}{\mathrm{Emb}}

\newcommand{\aij}{A_{ij}}

\newcommand{\Gamc}{\Gamma(\CC)}

\newcommand{\Gami}{\Gamma^\infty}
\newcommand{\Gamic}{\Gamma^\infty(\CC)}

\newcommand{\im}{\mbox{Im}}

\newcommand{\mspan}{\mbox{span}}

\newcommand{\hotimes}{\hat{\otimes}}
\newcommand{\sotimes}{\overset{\sim}{\otimes}}

\newcommand{\V}{\mathbb{V}}
\newcommand{\CC}{\mathbb{C}}
\newcommand{\C}{\CC}
\newcommand{\Q}{\mathbb{Q}}

\newcommand{\NN}{\mathbb{N}}
\newcommand{\N}{\NN}

\newcommand{\Z}{\mathbb{Z}}

\newcommand{\supp}{\mbox{supp}}
\newcommand{\cl}{\mathcal{B}}

\newcommand{\cP}{\mathcal{P}}

\newcommand{\fH}{\Gamma}
\newcommand{\fA}{\mathfrak{A}}
\newcommand{\fB}{\mathfrak{B}}

\newcommand{\ev}{\mathrm{ev}}

\title{Homotopy invariance through small stabilizations}
\author{Beatriz Abadie}

\author{Guillermo Corti\~nas}
\address{Dep. Matem\'atica-IMAS, FCEyN-UBA\\ Ciudad Universitaria Pab 1\\
1428 Buenos Aires\\ Argentina}
\email{gcorti@dm.uba.ar}\urladdr{http://mate.dm.uba.ar/\~{}gcorti}

\address{Centro de Matem\'atica, Facultad de Ciencias, Universidad de la Rep\'ublica\\
Igu\'a $4225$, 11400 Montevideo, Uruguay}
\email{abadie@cmat.edu.uy}

\thanks{Both authors were supported by MathAmSud network U11MATH-05, partially funded by ANII, Uruguay, and by MINCyT, Argentina. Corti\~nas was partially supported by CONICET and
by grants UBACyT W386, PIP 112-200801-00900 and MTM2007-64704 (FEDER funds).}


\begin{document}

\begin{abstract}
We associate an algebra $\Gami(\fA)$ to each bornological algebra
$\fA$. The algebra $\Gami(\fA)$ contains a two-sided ideal
$I_{S(\fA)}$ for each symmetric ideal $S\triqui\elli$ of bounded
sequences of complex numbers. In the case of $\Gami=\Gami(\C)$,
these are all the two-sided ideals, and $I_S\mapsto J_S=\cB I_S\cB$
gives a bijection between the two-sided ideals of $\Gami$ and those of
$\cB=\cB(\ell^2)$. We prove that Weibel's $K$-theory groups $KH_*(I_{S(\fA)})$
are homotopy invariant for certain ideals $S$ including $c_0$ and $\ell^p$.
Moreover, if either $S=c_0$ and $\fA$ is a local $C^*$-algebra or
$S=\ell^p,\ell^{p\pm}$ and $\fA$ is a local Banach algebra, then
$KH_*(I_{S(\fA)})$ contains $K_*^{\top}(\fA)$ as a direct summand.
Furthermore, we prove that for $S\in\{c_0,\ell^p,\ell^{p\pm}\}$
there is a long exact sequence
\[
\xymatrix{
KH_{n+1}(I_{S(\fA)})\ar[r]&HC_{n-1}(\Gami(\fA):I_{S(\fA)})\ar[d]\\
KH_{n}(I_{S(\fA)})&K_n(\Gami(\fA):I_{S(\fA)})\ar[l]
}
\]

\end{abstract}

\maketitle

\section{Introduction}
\numberwithin{equation}{section}
Let $\ell^2=\ell^2(\N)$ be the Hilbert space of square-summable
sequences of complex numbers and $\cB=\cB(\ell^2)$ the algebra of
bounded operators. Let $\emb$ be the inverse monoid of all partially defined
injections
\[
\N\supset\dom f\overset{f}{\longrightarrow}\N.
\]
Each element $f\in\emb$
defines a partial isometry $U_f\in\cB$; for the canonical Hilbert
basis we have $U_f(e_n)=e_{f(n)}$ if $n\in\dom f$ and $0$ otherwise.
Similarly, each bounded sequence of complex numbers $\alpha\in\elli$
defines an element $\diag(\alpha)\in\cB$ by
$\diag(\alpha)(e_n)=\alpha_ne_n$. The subspace generated by all the
$U_f$ and $\diag(\alpha)$ with $f\in\emb$ and $\alpha\in\elli$ is
the subalgebra
\[
\cB\supset\Gami:=\mspan\{\diag(\alpha)U_f:\alpha\in\elli, f\in\emb\}.
\]
In this article we show that the algebra $\Gami$ has several remarkable properties.
One of them is that the lattice of
two-sided ideals of $\Gami$ is isomorphic to the lattice of two-sided ideals of $\cB$.
A theorem of Calkin (\cite{calk}), as restated by Garling (\cite{garling}), establishes a
one-to-one correspondence between two-sided ideals of $\cB$ and the
ideals of $\elli$ that are symmetric, that is, invariant under the
action of $\emb$. Calkin's correspondence
maps a symmetric ideal $S\triqui\elli$ to the ideal
$J_S$ of those operators whose sequence of singular values belongs
to $S$. Consider the subspace
 \[
\Gami\supset I_S:=\mspan\{\diag(\alpha)U_f:\alpha\in S, f\in\emb\}.
\]
Note that $I_{\elli}=\Gami$; for all symmetric ideals $S$, $I_S\triqui\Gami$ is a two-sided ideal. We prove (see Theorem \ref{calkinforgami})

\begin{thm}\label{intro:calkinforgami}
The map $J\mapsto J\cap\Gami$ is a bijection between the sets of
two-sided ideals of $\cB(\ell^2(\NN))$ and $\Gami$. If
$S\triqui\elli$ is a symmetric ideal, then $J_S\cap \Gami=I_S$.
\end{thm}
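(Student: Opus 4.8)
The plan is to establish the bijection by exhibiting mutually inverse maps and then identifying the image of $J_S$. The key structural input is Calkin's theorem in Garling's form: two-sided ideals of $\cB$ correspond bijectively to symmetric ideals $S\triqui\elli$ via $S\mapsto J_S$, where $J_S$ consists of the compact operators whose singular-value sequence lies in $S$ (together with $\cB$ itself corresponding to $S=\elli$, and the zero ideal to $S=0$). Given this, it suffices to prove two things: first, that for every symmetric ideal $S$ one has $J_S\cap\Gami=I_S$; and second, that every two-sided ideal $\cJ\triqui\Gami$ is of the form $J_S\cap\Gami$ for a (necessarily unique) symmetric ideal $S$, so that $\cJ\mapsto$ (the symmetric ideal it determines) and $S\mapsto I_S$ are inverse to the restriction map. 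Uniqueness of $S$ is immediate since $S$ can be recovered from $J_S$ (hence from $I_S$) by reading off singular values — concretely, $\alpha\in S$ iff $\diag(\alpha)\in I_S$, because $\diag$ of a nonincreasing nonnegative rearrangement of $\alpha$ differs from $\diag(\alpha)$ by a unitary in $\emb$.

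For the identity $J_S\cap\Gami=I_S$, the inclusion $I_S\subseteq J_S\cap\Gami$ is the easy direction: a generator $\diag(\alpha)U_f$ with $\alpha\in S$ is visibly in $\Gami$, and since $U_f$ is a partial isometry the singular values of $\diag(\alpha)U_f$ are those of $\diag(\alpha)$ restricted to $\dom f$, hence dominated by a rearrangement of $\alpha$, so it lies in $J_S$ by symmetry of $S$; linear combinations stay in $J_S$ because $J_S$ is an ideal, in particular a linear subspace. The reverse inclusion $J_S\cap\Gami\subseteq I_S$ is the heart of the matter. An element $x\in\Gami$ is a finite sum $x=\sum_{i=1}^m\diag(\alpha^{(i)})U_{f_i}$ with $\alpha^{(i)}\in\elli$; one wants to show that if moreover $x\in J_S$ then $x$ can be rewritten as such a sum with all coefficient sequences in $S$. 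The natural approach is to analyze the matrix entries of $x$ in the canonical basis: $\langle x e_n, e_k\rangle=\sum_{i:\,f_i(n)=k}\alpha^{(i)}_n$, so each column of $x$ is finitely supported and $\Gami$ consists of "band-like" operators with a uniform bound on the number of nonzero entries per column. One should argue that such an operator in $J_S$ decomposes, after grouping the $f_i$ appropriately (e.g. refining so that for each pair $i\ne j$ either $f_i=f_j$ on a common part of the domain or the graphs are disjoint), into pieces each of which is genuinely of the form $\diag(\beta)U_g$ with $\beta$ a subsequence-rearrangement of the singular-value sequence of $x$, hence in $S$.

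More precisely, I would first reduce to the case where the partial injections $f_i$ have pairwise disjoint graphs: by partitioning $\bigcup_i\{(n,f_i(n))\}\subseteq\NN\times\NN$ according to which subset of $\{1,\dots,m\}$ contributes, one rewrites $x$ as a finite sum $\sum_j\diag(\beta^{(j)})U_{g_j}$ where the $g_j\in\emb$ have disjoint graphs and each $\beta^{(j)}$ is a "merge" of the relevant $\alpha^{(i)}$ (still in $\elli$). For an operator with disjoint-graph presentation, the matrix has at most one nonzero entry per row and per column, so it is a partial isometry times a diagonal in an essentially unique way, and its singular-value sequence is precisely the decreasing rearrangement of $(|\beta^{(j)}_n|)_{j,n}$. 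If $x\in J_S$ this whole family lies (as a single $\elli$-sequence) in $S$; restricting to the $j$-th block shows $\beta^{(j)}\in S$ for each $j$, whence $x=\sum_j\diag(\beta^{(j)})U_{g_j}\in I_S$.

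The main obstacle I anticipate is the disjoint-graph reduction together with the bookkeeping needed to see that the resulting coefficient sequences, viewed collectively, form exactly (a rearrangement of) the singular-value sequence of $x$ — i.e. that no cancellation or interaction between blocks spoils the estimate. Handling $S=\elli$ (the case $I_S=\Gami$, $J_S=\cB$) and the zero ideal must be checked separately as degenerate endpoints of Calkin's correspondence, but these are routine. Once $J_S\cap\Gami=I_S$ is established for all symmetric $S$, surjectivity of the restriction map follows: given $\cJ\triqui\Gami$, let $S=\{\alpha\in\elli:\diag(\alpha)\in\cJ\}$, check $S$ is a symmetric ideal (symmetry because $\cJ$ is stable under conjugation by the $U_f$, which lie in $\Gami$), and verify $\cJ=I_S$ by showing any generator $\diag(\alpha)U_f\in\cJ$ forces $\alpha\in S$ — multiply on the right by $U_f^*\in\Gami$ to get $\diag(\alpha)p\in\cJ$ where $p$ is the support projection, then extend — giving $\cJ\subseteq I_S\subseteq\cJ$.
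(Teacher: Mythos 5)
Your overall architecture (Calkin's theorem plus the identity $J_S\cap\Gami=I_S$ plus surjectivity of $\cJ\mapsto S(\cJ)=\{\alpha:\diag(\alpha)\in\cJ\}$) matches the paper's, but the step you yourself flag as the heart of the matter contains a genuine error. After regrouping $x=\sum_i\diag(\alpha^{(i)})U_{f_i}$ into a sum $\sum_j\diag(\beta^{(j)})U_{g_j}$ whose $g_j$ have pairwise disjoint graphs, you claim that the matrix of $x$ has at most one nonzero entry per row and column, so that its singular values are the decreasing rearrangement of the family $(|\beta^{(j)}_n|)_{j,n}$. That is false: disjointness of the graphs only means the summands occupy disjoint sets of positions, not that the union of these positions meets each row and column at most once (take $g_1(n)=2n$, $g_2(n)=2n+1$: every column then has two nonzero entries). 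A general element of $\Gami$ is a band-type matrix with up to $m$ nonzero entries per row and column, and its singular values are not a rearrangement of its entries, so the conclusion $\beta^{(j)}\in S$ does not follow. Nor can this be repaired by simply passing to submatrices or generalized diagonals of $x$: an arbitrary symmetric ideal need not be closed under submajorization, so cutting an operator in $J_S$ down to a prescribed zero-pattern can leave $J_S$ unless the cut-down is realized by finitely many two-sided multiplications by elements of $\Gami$ (or of $\cB$). The same gap reappears in your surjectivity step, where you argue on ``generators $\diag(\alpha)U_f\in\cJ$'': a priori the monomials occurring in a presentation of an element of $\cJ$ need not individually belong to $\cJ$.

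This missing ingredient is exactly what the paper's technical core supplies. Lemmas \ref{elusive} and \ref{bigsum} show that any matrix $A$ with $N(A)<\infty$ is a finite sum of matrices $A_l$ with $N(A_l)\le 1$ which moreover lie in $\Gamma A\Gamma$, i.e.\ are finite sums $\sum_j P_jAQ_j$ with $P_j,Q_j$ in Karoubi's cone (diagonal projections and permutations). Because they are sums of two-sided multiples of $A$, these pieces stay in any two-sided ideal of $\cB$ containing $A$ (for instance $J_S$) and in any two-sided ideal of $\Gami$ containing $A$ (Remark \ref{inid}); by Proposition \ref{Nisone} each piece is of the form $\diag(\beta)U_g$, and then Lemma \ref{poldecgami} (the partial isometry in the polar decomposition lies in $\Gami$) together with Calkin's theorem forces $\beta\in S$, hence membership in $I_S$. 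With this in hand the paper's proof of the theorem is short: $I=I_{S(I)}$ for every ideal $I\triqui\Gami$, $S=S(I_S)$ via $S\subset S(I_S)\subset S(J_S)\subset S$, and then $J_S\cap\Gami=I_S$ and $\langle J_S\cap\Gami\rangle=J_S$ follow formally. So your plan is right in outline, but without a substitute for the $\Gamma A\Gamma$-decomposition the two crucial inclusions, $J_S\cap\Gami\subseteq I_S$ and $\cJ\subseteq I_{S(\cJ)}$, remain unproved.
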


More generally, we define for any bornological algebra $\fA$ (in
particular for a Banach algebra $\fA$) an algebra $\Gami(\fA)$. The
algebra $\Gami (\fA)$ contains an ideal $I_{S(\fA)}$ for any
symmetric ideal $S\triqui\elli$, and $S\mapsto I_{S(\fA)}$ is a
lattice homomorphism. Thus the smallest nonzero 
$I_{S(\fA)}$ occurs when $S$ is the symmetric ideal
$c_f\triqui\elli$ of finitely supported sequences; we get
\[
I_{c_f(\fA)}=M_\infty \fA=\bigcup_nM_n\fA.
\]
Hence the inclusion $\fA\to M_\infty \fA$ into the upper left corner
gives a stability homomorphism
\begin{gather*}
\iota_S:\fA\to I_{c_f(\fA)}\subset I_{S(\fA)}.
\end{gather*}
If $\fA$ is unital then $\iota_{c_f}$ induces an isomorphism in
algebraic $K$-theory, by matrix stability. At the other extreme, 
$I_{\elli(\fA)}=\Gami(\fA)$ is a ring with infinite sums in the
sense of \cite{wag} (see Proposition \ref{prop:gaminfisuri}); this permits the Eilenberg swindle and we have
\[
K_*(\Gami(\fA))=0.
\]
For $c_f\subsetneq S\subsetneq \elli$, the $K$-theory of
$I_{S(\fA)}$ is more interesting. We study it for
\begin{equation}\label{eq:lists}
S\in\{c_0, \ell^{p-},\ellq, \ell^{q+} \quad(p\le\infty, q<\infty)\}.
\end{equation}
Here $c_0$ is the ideal of sequences vanishing at infinity,
$\ellq$ consists of the $q$-summable sequences, and
\[
\ell^{p-}=\bigcup_{r<p}\ell^r,\ \ \ell^{q+}=\bigcap_{s>q}\ell^s.
\]
 Let $\bass$ be the category of bornological
algebras. We consider several variants of $K$-theory. We write $K$
for algebraic $K$-theory, $KH$ for Weibel's homotopy algebraic
$K$-theory and $K^{\top}$ for topological $K$-theory. The following
result follows from Theorem \ref{thm:kh0}.

\begin{thm}\label{thm:kh0intro}
\item[i)] The functor $\bass\to \ab$, $\fA\mapsto KH_*(I_{c_0(\fA)})$
is invariant under continuous homotopy.

\item[ii)] If $\fA$ is a local $C^*$-algebra and $n\ge 0$, then
there is a natural split monomorphism
\[
\xymatrix{K^{\top}_n(\fA)\ar@{ >-}[r]& KH_n(I_{c_0(\fA)})}.
\]
\item[iii)] If $n\le 0$, then the comparison map
\begin{equation}\label{intro:compa}
K_n(I_{c_0(\fA)})\to KH_n(I_{c_0(\fA)})
\end{equation}
is an isomorphism for every $\fA\in\bass$. 
\end{thm}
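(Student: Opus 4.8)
The plan is to treat the three parts by reducing everything to a single structural input: the ideal $I_{c_0(\fA)}$ is built from the symmetric ideal $c_0\triqui\elli$, and $c_0$ is the closure of $c_f$, so $I_{c_0(\fA)}$ should be an increasing union (or a suitable colimit) of infinite matrix algebras of $\fA$, possibly completed. Concretely I would first establish that $I_{c_0(\fA)}$ is a \emph{stable} ring in an appropriate weak sense — for instance that the inclusion $M_\infty\fA=I_{c_f(\fA)}\hookrightarrow I_{c_0(\fA)}$ is, after applying $KH$, an isomorphism, using that $c_0$ is generated as a symmetric ideal by approximations in $c_f$ together with the known matrix-stability of $KH$. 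This is the technical heart and I expect it to be the main obstacle: one has to control how $\diag(\alpha)U_f$ with $\alpha\in c_0$ is approximated by finitely supported truncations and show that the induced maps on $KH$ are compatible, i.e. that $KH_*(I_{c_0(\fA)})\cong\colim KH_*(M_n\fA)$ in a way functorial in $\fA$. Given that, part (i) follows: $KH$ is homotopy invariant on all rings (this is Weibel's defining property of $KH$), and a filtered colimit of homotopy-invariant functors is homotopy invariant, so $\fA\mapsto KH_*(I_{c_0(\fA)})$ is invariant under continuous (indeed polynomial, hence continuous) homotopy.

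For part (ii), with $\fA$ a local $C^*$-algebra, I would identify $I_{c_0(\fA)}$ with (a dense local subalgebra of) $\cK\otimes\fA$, where $\cK=\cK(\ell^2)$ is the algebra of compact operators: indeed $J_{c_0}=\cK$ under Calkin's correspondence, and $I_{c_0}=J_{c_0}\cap\Gami$ by Theorem~\ref{intro:calkinforgami}, so $I_{c_0(\fA)}$ is the ``$\Gami$-version'' of $\cK\otimes\fA$. The idea is then to produce the split monomorphism $K^{\top}_n(\fA)\hookrightarrow KH_n(I_{c_0(\fA)})$ by comparing with topological $K$-theory of $\cK\otimes\fA$, using $C^*$-stability $K^{\top}_*(\cK\otimes\fA)\cong K^{\top}_*(\fA)$, together with a comparison map $KH_*(I_{c_0(\fA)})\to K^{\top}_*(\cK\otimes\fA)$ and the stabilization $\iota_{c_0}\colon\fA\to I_{c_0(\fA)}$. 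The composite $K^{\top}_n(\fA)\to KH_n(I_{c_0(\fA)})\to K^{\top}_n(\cK\otimes\fA)\cong K^{\top}_n(\fA)$ should be the identity (or an isomorphism), which forces the first arrow to be a split mono; the retraction is given by the comparison-to-topological map. Making the comparison map well-defined and checking the composite is the identity on $\pi_0$-type invariants and then on all $K^{\top}_n$ via homotopy invariance (available by part (i)) is the work here.

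For part (iii), in nonpositive degrees $n\le 0$ the comparison $K_n(R)\to KH_n(R)$ is known to be an isomorphism for any ring $R$ that is $K_{-1}$-regular (equivalently $K_n$-regular for all $n\le 0$), since then the Bass--Thomason--Weibel spectral sequence degenerates appropriately and $KH_n=K_n$ below degree $1$. So the plan is to show $I_{c_0(\fA)}$ is $K_{-1}$-regular for every $\fA\in\bass$. Again I would reduce to $M_\infty\fA$: $K_{-n}(R[t_1,\dots,t_k])\cong K_{-n}(R)$ for $R=M_\infty\fB$ follows because negative $K$-theory commutes with the filtered colimit over $n$, and $K_{-n}(M_n\fB[\underline t])$ can be handled by the cofinality/resolution arguments standard for negative $K$-theory of matrix rings; alternatively one invokes that $M_\infty\fA$ has no unit but is an idempotent (even $H$-unital) ring for which negative $K$-theory is regular. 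Combined with the colimit identification $KH_*(I_{c_0(\fA)})\cong KH_*(M_\infty\fA)$ from the first step and the analogous statement for $K_n$, $n\le 0$, we get that \eqref{intro:compa} is an isomorphism. The subtle point throughout is the passage between $I_{c_0(\fA)}$ and $M_\infty\fA$ at the level of the relevant $K$-theoretic invariants; once that dictionary is in place, (i)--(iii) are formal consequences of standard properties of $K$, $KH$, and $K^{\top}$.
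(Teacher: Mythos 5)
The central structural claim in your plan --- that the inclusion $M_\infty\fA=I_{c_f(\fA)}\hookrightarrow I_{c_0(\fA)}$ becomes an isomorphism after applying $KH$, so that $KH_*(I_{c_0(\fA)})\cong\colim_n KH_*(M_n\fA)\cong KH_*(\fA)$ --- is not just unproven but false, and it contradicts part (ii) of the very theorem you are proving. Indeed, if it held, then for $\fA=\C$ part (ii) would make $K^{\top}_2(\C)=\Z$ a direct summand of $KH_2(I_{c_0(\C)})\cong KH_2(\C)=K_2(\C)$, which is impossible since $K_2(\C)$ is uniquely divisible. The whole point of $I_{c_0(\fA)}$ is that it is a genuinely ``smaller'' stabilization whose $K$-theoretic behaviour differs from that of $M_\infty\fA$; its completion is $\cK$, not $M_\infty\C$. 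Relatedly, your deduction of (i) conflates two notions of homotopy: $KH$ is invariant under \emph{polynomial} homotopies by construction, and no colimit argument upgrades this to invariance under \emph{continuous} homotopies of the functor $\fA\mapsto KH_*(I_{c_0(\fA)})$. That upgrade is exactly the hard content here: the paper proves it (Theorem \ref{thm:htpy}) by exhibiting $I_{c_0(\fA)}\cong c_0(\fA)\#_\cP\Gamma$ as a crossed product, running a quasi-homomorphism argument with the isometries of the Cohn ring sitting inside $\Gamma$, and using an $M_2$-stability statement for $\Gamma(\N\sqcup\N)$ to see that the relevant map is injective; this applies to any split-exact $M_2$-stable functor, in particular to $KH$ and to $K_0$.

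Your part (iii) inherits the same problem: you propose to reduce $K_0$- (or $K_{-1}$-) regularity of $I_{c_0(\fA)}$ to that of $M_\infty\fA$, but $M_\infty\fA$ is $K_0$-equivalent to $\fA$ by matrix stability, and a general bornological algebra (e.g.\ a singular affine coordinate ring with the fine bornology) is not $K_0$-regular; moreover $K_n(I_{c_0(\fA)})$ for $n\le 0$ is not $K_n(M_\infty\fA)$. The paper instead proves $K_0$-regularity of $I_{c_0(\fA)}$ directly, using $I_{c_0(\fA)}[\ul{t}]=(c_0(\fA)[\ul{t}])\#_\cP\Gamma$, the identification $c_0(C([0,1],\fA))=C([0,1],c_0(\fA))$, Rosenberg's rescaling homotopy $f(s,\ul{t})\mapsto f(s,s\ul{t})$, and the homotopy invariance theorem applied to $K_0$; then Weibel's criterion for $K_0$-regular rings gives $KH_n=K_n$ for $n\le 0$. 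Your sketch of part (ii) is the closest to the paper's argument (a retraction through $\fA\sotimes\cK$ using Suslin--Wodzicki and the isomorphism $KH_*(\fA\sotimes\cK)\cong K^{\top}_*(\fA\sotimes\cK)$), but even there the construction of the map $K^{\top}_n(\fA)\to KH_n(I_{c_0(\fA)})$ for $n\ge 1$ requires the continuous homotopy invariance and the identification $KH_n=KV_n$ (via simplicial algebras of continuous functions and $\pi_nBGL$), i.e.\ precisely the ingredients your versions of (i) and (iii) fail to supply.
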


The results above should be compared with Karoubi's conjecture
(Suslin-Wodzicki's theorem \cite{sw1}*{Theorem 10.9}) that for a $C^*$-algebra
$\fA$, the comparison map
\[
K_*(\fA\sotimes\cK)\to K^{\top}_*(\fA\sotimes\cK)\cong
K^{\top}_*(\fA)
\]
is an isomorphism. Hence we may think of $\fA\to I_{c_0(\fA)}$ as a
smaller version of the stabilization $\fA\mapsto\fA\sotimes\cK$
whose homotopy algebraic $K$-theory is continuously homotopy invariant and contains $K_*^{\top}(\fA)$
as a direct summand. Next let
$p\ge 1$ and consider the Schatten ideal $\Ellp\triqui\cB$. Notice that $\Ellp$
is the ideal corresponding to $\ellp$ under Calkin's correspondence. We have
\[
\Ellp=J_{\ellp}.
\]
Recall from \cite{cot}*{Theorem 6.2.1} that if $\fA$ is a locally convex algebra and
$\fA\hotimes\Ellp$ is the projective tensor product then
\[
KH_*(\fA\hotimes\Ell^1)\iso KH_*(\fA\hotimes\Ellp)\iso
K^{\top}_*(\fA\hotimes\Ellp).
\]
In the present article (Theorem \ref{thm:kh}) 
we prove the following analogue of the latter
result.

\begin{thm}\label{thm:khintro}
Let $S$ be one of $\ell^p$, $\ell^{p+}$ ($0<p<\infty$) or
$\ell^{p-}$ ($0<p\le\infty$).
\item[i)] The functor $\bass\to \ab$, $\fA\mapsto KH_*(I_{\ell^1(\fA)})$
is invariant under H\"older-continuous homotopies and we have
$KH_*(I_{S(\fA)})=KH_*(I_{\ell^1(\fA)})$ for all $S$ as above.
\item[ii)] If $\fA$ is a local Banach algebra and $n\ge 0$, then
there is a natural split monomorphism
\[
\xymatrix{K^{\top}_n(\fA)\ar@{ >-}[r]& KH_n(I_{\ell^1(\fA)})}.
\]
\item[iii)] If $n\le 0$, then the comparison map
\begin{equation}\label{intro:compa2}
K_n(I_{S(\fA)})\to KH_n(I_{S(\fA)})
\end{equation}
is an isomorphism for every $\fA\in\bass$. 
\end{thm}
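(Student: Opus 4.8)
The plan is to establish i), ii) and iii) along the same lines as the $c_0$-case of Theorem~\ref{thm:kh0}, the two genuinely new points being that the Schatten-type ideals $\ell^p,\ell^{p-},\ell^{p+}$ are, like $c_0$, $H$-unital, so that $I_{S(\fA)}\triqui\Gami(\fA)$ satisfies $K$-, $KH$- and $HC$-excision; and that the homotopies one must absorb are only H\"older-continuous. The structural inputs I would use throughout are: $\Gami(\fA)$ is a sum ring (Proposition~\ref{prop:gaminfisuri}), whence $K_*(\Gami(\fA))=KH_*(\Gami(\fA))=0$; $I_{S(\fA)}$ is $H$-unital for every $\fA\in\bass$ and $S$ in the list; and $S\mapsto I_{S(\fA)}$ is a lattice homomorphism commuting with directed unions, with $I_{c_f(\fA)}=M_\infty\fA$.

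For i), I would first reduce the equality $KH_*(I_{S(\fA)})=KH_*(I_{\ell^1(\fA)})$ to the case $S=\ell^r$, $0<r<\infty$: since $I_{\ell^{p-}(\fA)}=\colim_{r<p}I_{\ell^r(\fA)}$ and $KH$ commutes with filtered colimits, and since $\ell^q\subseteq\ell^{q+}\subseteq\ell^s$ for all $s>q$ squeezes $I_{\ell^{q+}(\fA)}$ between ideals with the same $KH$, it suffices to know that $KH_*(I_{\ell^r(\fA)})$ is independent of $r$ with isomorphic transition maps. For this I would prove that $\fA\mapsto KH_*(I_{\ell^r(\fA)})$ is invariant under H\"older-continuous homotopies: $KH$ is polynomial-homotopy invariant and $I_{\ell^r(\fA[t])}=I_{\ell^r(\fA)}[t]$, so by the usual reduction it is enough to contract an arbitrary H\"older path inside $I_{\ell^r(\fA)}$, and the partial injections of $\emb$ together with the decay of $\ell^r$-sequences let one assemble such a contraction from a partition of $[0,1]$ and the reparametrizations $t\mapsto t^{1/m}$; it is precisely the $\ell^r$-summability of the coefficient sequences thus produced that forces the regularity to be H\"older rather than merely continuous and that makes the outcome independent of $r$. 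This contraction, and the verification that it stays inside the ideal, is the main obstacle; granting it, every $KH_*(I_{S(\fA)})$ with $S$ as in the statement equals $KH_*(I_{\ell^1(\fA)})$, which is then H\"older-homotopy invariant.

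For ii), write $G(\fA)=KH_*(I_{\ell^1(\fA)})$. By i) this functor is H\"older-homotopy invariant; it is excisive because $KH$ is and $I_{\ell^1(-)}$ preserves extensions, $M_\infty$-stable by excision and matrix invariance, and hence $\Ell^1$-stable. I would then argue as in the proof of part ii) of Theorem~\ref{thm:kh0} (the $C^*$-case): by the universal property of topological $K$-theory among H\"older-homotopy invariant, excisive, $\Ell^1$-stable functors on local Banach algebras, the canonical map $K_n(\fA)\to KH_n(I_{\ell^1(\fA)})$ (induced by $\fA\to M_\infty\fA\to I_{\ell^1(\fA)}$) factors through a natural transformation $s\colon K^{\top}_n(\fA)\to KH_n(I_{\ell^1(\fA)})$ for $n\ge0$, while the dense inclusion $I_{\ell^1(\fA)}\hookrightarrow\fA\hotimes\Ell^1$ and the Suslin--Wodzicki isomorphism $KH_*(\fA\hotimes\Ell^1)\iso K^{\top}_*(\fA)$ of \cite{cot}*{Theorem 6.2.1} provide a natural retraction $r\colon KH_n(I_{\ell^1(\fA)})\to K^{\top}_n(\fA)$. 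Since $\fA\to\fA\hotimes\Ell^1$ induces the identity on $K^{\top}_n$, the natural endomorphism $r\circ s$ of $K^{\top}_n$ becomes the identity after composition with $K_n(\fA)\to K^{\top}_n(\fA)$, and rigidity of $K^{\top}$ then forces $r\circ s=\mathrm{id}$; thus $s$ is the desired natural split monomorphism. The hypothesis $n\ge0$ enters only through the factorization through $K^{\top}$.

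For iii), apply excision to the extension $0\to I_{S(\fA)}\to\Gami(\fA)\to\Gami(\fA)/I_{S(\fA)}\to0$: $H$-unitality of $I_{S(\fA)}$ gives $K_n(I_{S(\fA)})\cong K_n(\Gami(\fA):I_{S(\fA)})$ and $KH_n(I_{S(\fA)})\cong KH_n(\Gami(\fA):I_{S(\fA)})$. The difference between relative $K$- and relative $KH$-theory is governed, in characteristic zero, by relative cyclic homology (Goodwillie's theorem; in the relative form this is the long exact sequence displayed in the abstract), whose groups $HC_{n-1}(\Gami(\fA):I_{S(\fA)})$ and $HC_{n-2}(\Gami(\fA):I_{S(\fA)})$ both vanish for $n\le 0$ because cyclic homology lives in nonnegative degrees. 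Hence $K_n(\Gami(\fA):I_{S(\fA)})\to KH_n(I_{S(\fA)})$ is an isomorphism for $n\le 0$, and composing with the excision isomorphism shows that the comparison map $K_n(I_{S(\fA)})\to KH_n(I_{S(\fA)})$ is an isomorphism for all $n\le 0$ and all $\fA\in\bass$.
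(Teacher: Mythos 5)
There is a genuine gap at the heart of your part i): the H\"older homotopy invariance of $\fA\mapsto KH_*(I_{\ell^p(\fA)})$ is exactly the main technical theorem needed here (Theorem \ref{thm:htpy}), and your proposal defers it ("this contraction \dots is the main obstacle; granting it\dots"). Moreover the mechanism you sketch --- contracting a H\"older path inside $I_{\ell^r(\fA)}$ by reparametrizations $t\mapsto t^{1/m}$ --- is not how the invariance can be obtained: $I_{\ell^p(\fB)}$ is not contractible in any useful sense, and what the paper actually does is run a quasi-homomorphism argument (the dyadic-subdivision maps $\phi_\pm,\phi^2_0,\phi^2_-$ of \cite{cmr}*{Lemma 3.26}, where H\"older regularity is what puts the difference sequences in $\ell^p$) inside crossed products with the Cohn ring $C_2$ (Lemma \ref{lem:c2}), using split-exactness and $M_2$-stability of $F$, and then upgrades this to $I_{S(\fB)}=S(\fB)\#_\cP\Gamma$ by a stability result (Lemma \ref{lem:gamastable} and Proposition \ref{prop:semigami}, i.e.\ $\Gamma(\N\sqcup\N)\cong M_2\Gamma$) which makes the relevant comparison map split injective. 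None of this is supplied by your outline, so the theorem is not proved.

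Parts ii) and iii) also rest on steps that do not hold as stated. For ii), there is no off-the-shelf "universal property of topological $K$-theory among H\"older-homotopy invariant, excisive, $\Ell^1$-stable functors" on bornological algebras that you can cite; the paper instead produces the map $K^{\top}_n(\fA)\to KH_n(I_{\ell^1(\fA)})$ concretely, by first proving $I_{\ell^1(\fA)}$ is $K_0$-regular (Rosenberg's rescaling trick through $C^\infty([0,1],\ell^1(\fA))$, using Theorem \ref{thm:htpy} for $K_0$), hence $KH_n=KV_n$ for $n\ge1$ and $KH_n=K_n$ for $n\le0$ by \cite{kh}*{Proposition 1.5}, and then using the simplicial algebra $\Delta^{\dif}\fA$ to map $K^{\top}_n(\fA)=\pi_nBGL(\Delta^{\dif}\fA)$ into $KV_n(I_{\ell^1(\fA)})$. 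Your argument that $r\circ s=\mathrm{id}$ is also a non sequitur: knowing $r\circ s$ agrees with the identity after precomposition with $K_n(\fA)\to K^{\top}_n(\fA)$ proves nothing, since that comparison map is far from surjective; the paper instead checks directly that the composite $K^{\top}_n(\fA)\to KH_n(I_{\ell^1(\fA)})\to KH_n(\Ell^1\hotimes\fA)\cong K^{\top}_n(\Ell^1\hotimes\fA)$ is induced by $a\mapsto a\hotimes E_{1,1}$, which is a $K^{\top}$-isomorphism. For iii), your excision step requires $I_{S(\fA)}$ to be $H$-unital, which is neither proved in the paper nor true for arbitrary $\fA\in\bass$ (take $\fA$ with zero multiplication), and your appeal to the long exact sequence of the abstract is an appeal to Theorem \ref{thm:kseq}, a separate result whose proof itself uses the same homotopy-invariance machinery ($K^{\inf}$-regularity); the paper's route to iii) is the much more elementary one already mentioned: $K_0$-regularity plus Weibel's theorem give $K_n(I_{\ell^1(\fA)})\cong KH_n(I_{\ell^1(\fA)})$ for $n\le0$, and the case of general $S$ follows because $K_n$ ($n\le0$), like $KH$, is excisive, nilinvariant and compatible with filtering colimits, so that $I_{S(\fA)}$ and $I_{\ell^1(\fA)}$ have the same nonpositive $K$-theory.
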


Both these theorems rely on a homotopy invariance theorem (Theorem \ref{thm:htpy})
which we think is of independent interest. The theorem says that if $F:\ass\to\ab$ is
an $M_2$-stable, split exact functor and $S\in\{c_0,\ellp\}$,  then the functor
\[
\bass\to\ab,\ \ \fA\mapsto F(I_{S(\fA)})
\]
is homotopy invariant. For $S=c_0$ it is continuous homotopy
invariant, while for $S=\ellp$ it is invariant under H\"older
continuous homotopies, with H\"older exponent depending on $p$. For
$F=KH_*$ we have $KH_*(I_{\ellp(\fA)})=KH_*(I_{\ell^1(\fA)})$, and so
it is invariant under arbitrary H\"older continuous homotopies.
Furthermore, we have the following general result (see Theorem
\ref{thm:kseq}) about the comparison map $K\to KH$. Its proof uses
the homotopy invariance theorem mentioned above applied to
infinitesimal $K$-theory.

\begin{thm}\label{thm:introkseq}
Let $\fA$ be a bornological algebra and let $S$ be $c_0$,
$\ell^p$, $\ell^{p+}$ ($0<p<\infty$) or $\ell^{p-}$
($0<p\le\infty$). Then there are long exact sequences ($n\in\Z$)
\begin{equation}\label{introseq:abs}
\xymatrix{
KH_{n+1}(I_{S(\fA)})\ar[r]&HC_{n-1}(I_{S(\fA)})\ar[d]\\
KH_{n}(I_{S(\fA)})&K_n(I_{S(\fA)})\ar[l]}
\end{equation}
and
\begin{equation}\label{introseq:rel}
\xymatrix{
KH_{n+1}(I_{S(\fA)})\ar[r]&HC_{n-1}(\Gami(\fA):I_{S(\fA)})\ar[d]\\
KH_{n}(I_{S(\fA)})&K_n(\Gami(\fA):I_{S(\fA)})\ar[l] }
\end{equation}
\end{thm}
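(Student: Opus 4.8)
The plan is to deduce Theorem \ref{thm:introkseq} from the general machinery already in place, principally the homotopy invariance theorem (Theorem \ref{thm:htpy}) and the standard relationship between $K$, $KH$, and cyclic homology. Recall that for any ring $R$ there is a homotopy fiber sequence
\[
K^{\inf}(R)\to K(R)\to KH(R)
\]
expressing the failure of $K$ to be homotopy invariant through infinitesimal $K$-theory $K^{\inf}$, and that Goodwillie's theorem identifies the relative term: for a nilpotent (or more generally for the pair given by a two-sided ideal in the relevant completed sense) one has $K^{\inf}_*\cong HC_{*-1}$ rationally, and in the generality needed here $K^{\inf}$ of an ideal agrees with the shifted cyclic homology $HC_{*-1}$. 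Concretely, the content of the theorem is that applying $K^{\inf}$ (equivalently the fiber of $K\to KH$) to the functor $\fA\mapsto I_{S(\fA)}$ produces something computed by cyclic homology, and the long exact sequences \eqref{introseq:abs} and \eqref{introseq:rel} are just the long exact sequences of the corresponding fiber sequences once we make this identification.

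First I would recall that $K^{\inf}$ is an $M_2$-stable, split exact functor on $\ass$ (this is one of its defining/basic properties, used already in the discussion preceding Theorem \ref{thm:introkseq}). Hence Theorem \ref{thm:htpy} applies: the functor $\bass\to\ab$, $\fA\mapsto K^{\inf}_*(I_{S(\fA)})$ is homotopy invariant (continuously for $S=c_0$, H\"older-continuously for $S=\ell^p$, and by the reduction $KH_*(I_{\ell^p(\fA)})=KH_*(I_{\ell^1(\fA)})$ and the analogous statement for the variants $\ell^{p\pm}$, for all $S$ in the stated list). The key consequence I want to extract is that on the ideal $I_{S(\fA)}$ the comparison fiber $K^{\inf}$ can be replaced by cyclic homology: by Goodwillie-type arguments (the version for $H$-unital ideals, or via the fact that $K^{\inf}$ is homotopy invariant and nilinvariant so that it is represented by the relevant cyclic complex), one gets a natural isomorphism $K^{\inf}_n(I_{S(\fA)})\cong HC_{n-1}(I_{S(\fA)})$, and similarly in the relative case $K^{\inf}_n(\Gami(\fA):I_{S(\fA)})\cong HC_{n-1}(\Gami(\fA):I_{S(\fA)})$. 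Then the fiber sequence $K^{\inf}(I_{S(\fA)})\to K(I_{S(\fA)})\to KH(I_{S(\fA)})$ yields \eqref{introseq:abs} upon taking homotopy groups, and the relative analogue (the fiber sequence for the pair $\Gami(\fA)\supset I_{S(\fA)}$, using excision for $KH$ and the relative comparison map) yields \eqref{introseq:rel}.

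The main obstacle, and the step requiring the most care, is the identification of $K^{\inf}$ of these ideals with cyclic homology. One cannot invoke Goodwillie's theorem naively because $I_{S(\fA)}$ is not a nilpotent ideal; the correct tool is the combination of (a) $H$-unitality / excision properties, so that relative $K$-theory, relative $HC$, and relative $K^{\inf}$ for the pair $(\Gami(\fA), I_{S(\fA)})$ are genuinely intrinsic to $I_{S(\fA)}$, together with (b) the homotopy invariance just established, which forces the fiber of $K\to KH$ on $I_{S(\fA)}$ to coincide with the universal homotopy-invariant, nilinvariant, $M_2$-stable, excisive theory mapping out of $K$ — and that theory is known to be (shifted) cyclic homology. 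So the real work is in verifying the hypotheses (excision / $H$-unitality of $I_{S(\fA)}$, which should follow from the structural results on $\Gami(\fA)$ in the earlier sections, e.g.\ that $I_{S(\fA)}$ is a filtered colimit of matrix algebras over $\fA$ in a suitable sense) and then assembling the fiber sequences. Once those are in hand the long exact sequences are formal: they are the homotopy long exact sequences of the two fiber sequences, with the cyclic homology terms substituted in via the isomorphism above.

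I would organize the write-up as: (1) recall the fiber sequence $K^{\inf}\to K\to KH$ and the relative version for a pair; (2) verify that $I_{S(\fA)}$ satisfies excision so the relative groups are intrinsic; (3) apply Theorem \ref{thm:htpy} to $K^{\inf}$ to get homotopy invariance of $\fA\mapsto K^{\inf}_*(I_{S(\fA)})$ and conclude the identification $K^{\inf}_*(I_{S(\fA)})\cong HC_{*-1}(I_{S(\fA)})$ (and its relative analogue); (4) splice into the long exact sequences. I expect step (2)–(3) to carry essentially all the weight, with (1) and (4) being bookkeeping.
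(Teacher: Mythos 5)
Your overall skeleton (take the fiber of $K\to KH$, identify it on $I_{S(\fA)}$ with shifted cyclic homology, then splice the long exact sequences, using excision for the relative case) is the paper's strategy, but the step carrying all the weight is not proved, and the justifications you offer for it are not correct. First, a notational but consequential confusion: the fiber of $K\to KH$ is not infinitesimal $K$-theory; in the paper $K^{\nil}=\hofi(K\to KH)$, while $K^{\inf}$ (the fiber of the Chern character to negative cyclic homology) is a different theory, and it enters the argument in an essential way. Second, the identification $K^{\nil}_n(I_{S(\fA)})\cong HC_{n-1}(I_{S(\fA)})$ does not follow from Goodwillie's theorem (the ideals $I_{S(\fA)}$ are not nilpotent and no nilpotent extension is in sight), nor from your universality claim: the universal excisive, $M_2$-stable, nilinvariant, homotopy invariant theory under $K$ is $KH$ itself, not shifted $HC$ (indeed $HC$ is not homotopy invariant), so "homotopy invariance of the fiber forces it to be $HC[-1]$" is a non sequitur. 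The actual mechanism in the paper is: there is a natural map $\nu:K^{\nil}(A)\to HC(A)[-1]$ (\cite{friendly}); its fiber $K^{\ninf}=\hofi(\nu)$ is excisive, $M_2$-stable, nilinvariant and commutes with filtering colimits (\cite{corel}); and by \cite{cot}*{Proposition 3.1.4}, $K^{\ninf}_*(A)=0$ whenever $A$ is $K^{\inf}$-regular. So the theorem reduces to proving that $I_{S(\fA)}$ is $K^{\inf}$-regular (after reducing $\ell^p,\ell^{p\pm}$ to $\ell^1$ via nilinvariance and filtering colimits of $K^{\ninf}$ -- a reduction you also omit).

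Moreover, even granting this reduction, Theorem \ref{thm:htpy} alone does not give $K^{\inf}$-regularity: that theorem yields invariance under continuous (resp.\ H\"older) homotopies, whereas regularity concerns polynomial variables. The bridge is the rescaling trick used in the proofs of Theorems \ref{thm:kh} and \ref{thm:kh0} (borrowed from \cite{roshan}): one uses the identifications $I_{\ell^1(\fA)}[\ul t]=(\ell^1(\fA)[\ul t])\#_\cP\Gamma$ and $\ell^1(C^\infty([0,1],\fA))=C^\infty([0,1],\ell^1(\fA))$ (resp.\ $c_0(C([0,1],\fA))=C([0,1],c_0(\fA))$) and the homomorphism $\phi(f)(s,\ul t)=f(s,s\ul t)$ to deduce from Theorem \ref{thm:htpy} that $\epsilon$ and the identity agree after applying the functor; applying this with $F=K^{\inf}$, which is excisive and $M_2$-stable by \cite{kabi}, gives $K^{\inf}$-regularity of $I_{S(\fA)}$ for $S\in\{c_0,\ell^1\}$. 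Finally, your worry about $H$-unitality of $I_{S(\fA)}$ is a red herring: the relative sequence \eqref{introseq:rel} only uses that $KH$ and $K^{\ninf}$ are excisive as functors on all $\Q$-algebras, so $K^{\ninf}(\Gami(\fA):I_{S(\fA)})\simeq K^{\ninf}(I_{S(\fA)})=0$ and $KH(\Gami(\fA):I_{S(\fA)})\simeq KH(I_{S(\fA)})$; no excision property of the ideal itself is needed. As written, your proposal would not compile into a proof without supplying the $\nu$/$K^{\ninf}$ formalism, the vanishing criterion via $K^{\inf}$-regularity, and the polynomial-to-smooth homotopy argument.
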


It is shown in the companion paper \cite{whatever} that $HC_*(\Gami(\fA):I_{S(\fA)})=0$ when either $S=c_0$ and $\fA$ is a C*-algebra or $S=\elli$ and $\fA$ is a unital Banach algebra. Therefore, the comparison  map $K_*(I_{S(\fA)})\longrightarrow KH_*(I_{S(\fA)})$ is an isomorphism in these cases. In addition, the groups $HC_n(\Gami:I_S)$ are computed in \cite{whatever} for $S\in\{\ell^p,\ell^{p\pm}\}$, and the map $HC_n(\Gami:I_S)\longrightarrow HC_n(\cB:J_S)$ is shown to be an isomorphism for those values of $n$ for which $HC_n(\cB:J_S)$ was computed by Wodzicki
(\cite{wodk}).

The rest of this paper is organized as follows. In Section
\ref{sec:prelis} we establish some notation about sequence spaces,
the inverse monoid $\emb$ and the partial isometries $U_f$. The
algebra $\Gami(\fA)$ and the ideals $I_{S(\fA)}$ are introduced in
Section \ref{sec:gamis}. In this section we also recall the
definition of Karoubi's cone $\Gamma(R)$ which is $R$-linearly
generated by the $U_f$ ($f\in\emb$). Proposition \ref{gamma}
identifies $I_{S(\fA)}$ with a ring formed by certain $\N\times\N$
matrices with coefficients in $\fA$. The two-sided
ideals of $\Gami$ are studied in Section \ref{sec:ideals}; Theorem
\ref{intro:calkinforgami} is contained in Theorem
\ref{calkinforgami}. We prove in Section \ref{sec:misce} that if $\fA$ is unital, then $\Gami(\fA)$ is a ring
with infinite sums in the sense of Wagoner (Proposition
\ref{prop:gaminfisuri}). In Section \ref{sec:crossp} we show that
$I_{S(\fA)}$ can be written as a crossed product of
$\Gamma=\Gamma(\Z)$ and $S(\fA)$, by using the conjugation action of
$\emb$ in $S(\fA)$ via the partial isometries $U_f$ (Proposition
\ref{prop:cpg}). Section \ref{sec:hit} deals with the
homotopy invariance theorem mentioned above, proved in Theorem
\ref{thm:htpy}. Applications to $K$-theory are given in Section
\ref{sec:K}; see Theorems \ref{thm:kh}, \ref{thm:kh0} and
\ref{thm:kseq}. 
\begin{ack}
Most of the research for this paper was carried out during visits of
B. Abadie to the Universidad de Buenos Aires and of G. Corti\~nas to
the Universidad de la Rep\'ublica. We are thankful to these
institutions for their hospitality. G. Corti\~nas wishes to thank
his colleague Daniel Carando for useful
discussions about topological tensor products and Ruy Exel for many
useful discussions and for patiently explaining his paper \cite{ruy}.
\end{ack}

\section{Preliminaries}\label{sec:prelis}

\subsection{Sequence ideals}

\numberwithin{equation}{subsection}

Throughout this paper we work in the setting of bornological spaces
and bornological algebras; a quick introduction to the subject is
given in \cite{cmr}*{Chapter 2}. Recall a (complete, convex)
bornological vector space over the field $\C$ of complex numbers is
a filtering union $\V=\cup_D\V_D$ of Banach spaces, indexed by the
disks of $\V$ such that the inclusions $\V_D\subset \V_{D'}$ are
bounded.
A subset of $\V$ is \emph{bounded} if it is a
bounded subset of some $\V_D$. A sequence $\N\to \V$ is
\emph{bounded} if its image is a bounded subset of $\V$. We write
$\elli(\N,\V)$ or simply $\elli(\V)$ for the bornological vector
space of bounded sequences where $X\subset\elli(\V)$ is bounded if
$\bigcup_{x\in X}x(\N)$ is. We consider the following closed
bornological subspace
\begin{equation}
\label{c0}
\elli(\V)\supset c_0(\V)=\{\alpha:\lim_n\alpha_n=0\}
\end{equation}
We also consider the subspace ($p>0$)
\[
c_0(\V)\supset \ellp(\V)=\{\alpha:\N\to \V:(\exists \mbox{ a disk } D\subset \V)\sum_n||\alpha_n||_D^p<\infty\}
\]
If $p\ge 1$, we equip $\ellp(\V)$ with the following bornology: we
say that a subset $S\subset \ellp(\V)$ is bounded if there exist a
disk $D$ and a constant $C$ such that $\sum_n||\alpha_n||_D^p<C$ for
all $\alpha\in S$. Notice that the inclusion $\ellp(\V)\to
\elli(\V)$ is bounded for $p\ge 1$. Recall a bornological algebra is
a bornological vector space $\fA$ with an associative bounded
multiplication. If $\fA$ is a bornological algebra, then pointwise
multiplication makes $\elli(\fA)$ into a bornological algebra,
$c_0(\fA)\triqui\elli(\fA)$ is a closed bornological ideal, and
$\ellp(\fA)\triqui\elli(\fA)$ is an algebraic ideal for all $p>0$. '

\begin{nota}\label{nota:noC}
When $\fA$ is $\C$, we shall omit it from our notation. Thus we shall write $\elli$, $\ell^p$, $c_0$, etc, for $\elli(\C)$, $\ell^p(\C)$, $c_0(\C)$, etc.
\end{nota}

The space $\cB(\ell^2(\V))$ of bounded operators $\ell^2(\V)\to \ell^2(\V)$ on a bornological vector space $\V$ is a bornological algebra with the uniform bornology (\cite{cmr}*{Def. 2.4}). If $\fA$ is a bornological algebra, then
\begin{equation}\label{defdiag}
\diag:\elli(\fA)\to \cB(\ell^2(\fA)),\quad \diag(\alpha)(\xi)=(\alpha_n\xi_n)_{n\ge 1}.
\end{equation}
is a bounded representation. It is faithful if and only if the
left annihilator of $\fA$ is trivial:
\[
\mathrm{ann}(\fA)=\{a\in\fA: a\cdot b=0\quad (\forall b\in\fA)\}=0,
\]
This happens, for instance, when $\fA$ is unital.

\subsection {The monoid \texorpdfstring{$\emb$}{emb}.}\label{subsec:emb}

We begin by recalling some definitions from \cite{kenetal2}. We
denote by $\emb$ the set of injective functions
\[\text{$\emb$}=\{f:A \rightarrowtail \NN: A\subset \NN\}.\]
Note that $\emb$ is a monoid for the composition law:
\begin{equation}
\label{prodemb} f g:\dom(g)\cap g^{-1}(\dom
(f))\rightarrow \NN,\ (f g)(n)=f(g(n)).
\end{equation}
In \eqref{prodemb} and elsewhere, we shall omit the composition sign
$\circ$, except when strictly necessary to avoid
confusion.
The monoid $\emb$ is
\emph{pointed}, i.e. it has a zero element, namely, the empty
function $\emptyset\to \N$. The antipode map $^\dagger:
\text{$\emb$} \rightarrow \text{$\emb$}$ is defined by
\[\dom(f^\dagger)=\ran(f),\  f^\dagger(n)=f^{-1}(n).\]
If $A\subset \N$, we write $P_A$ for the inclusion $A\hookrightarrow \N$.
It is easily checked that
\begin{equation}
\label{dagproj}
f^\dagger f =P_{\sdom f},\ f f^\dagger=P_{\sran f},
\end{equation}
for any $f\in$ $\emb$. Observe that $f^\dagger$ is characterized as
the unique element of $\emb$ which satisfies simultaneously
\begin{gather*}
f f^\dagger f=f\mbox{ and }
f^\dagger f
f^\dagger=f^\dagger.
\end{gather*}
Thus the monoid $\emb$ together with its antipode is a pointed
\emph{inverse monoid} that is, a pointed \emph{inverse semigroup}
with identity element. Note that $\emb$ is the object usually
denoted $\cI(\N)$ in the literature on semigroups (see
\cite{ruy}*{Def. 4.2}, for instance).

If $\V$ is a bornological vector space, the monoid $\emb$ acts on
$\elli(\V)$  via:
\begin{equation}
\label{actionofemb}
f_*(\alpha)_n= \begin{cases}
                    \alpha_{f^{\dagger}(n)}&\text{ if $n\in \ran (f)$ }\\
                         0& \text{otherwise}   .      \end{cases}
\end{equation}

The subspaces $c_0(\V)$ and $\ell^p(\V)$ defined in \ref{c0} are
\emph{symmetric}, i.e. they are invariant under the action of
$\emb$. Indeed, this follows from the fact that $c_0$ and $\ellp$
are symmetric, and that if $D$ is a bounded disk and the image of
$\alpha$ is contained in $\V_D$, then the following sequences of real numbers are identical
\[
||f_*(\alpha)||_D=f_*(||\alpha||_D).
\]
More generally, if $S\subset \elli$ is any symmetric
subspace, then
\[
S(\V):=\{\alpha\in\elli(\V):(\exists D)\, \alpha(\N)\subset \V_D\text{
and }||\alpha||_D\in S\}
\]
is symmetric. We
denote by $U$ the representation of $\emb$ by partial isometries on
$\ell^2(\V)$:
\begin{equation}
\label{defu}
U_f(\xi)_m=\begin{cases} \xi_n & \text{if $f(n)=m$}\\
                             0& \text{if $m\notin\ran(f)$}\end{cases}\qquad (\xi\in\ell^2(\V)).
\end{equation}

Straightforward computations show that
\begin{equation}\label{eqpropus}
U_{fg}=U_fU_g. 
\end{equation}

Observe that $U_f$ is a partial isometry whose initial and final space are, respectively, the closed subspaces
\[
\mspan\{v:\supp(v)\subset\dom(f)\}\mbox{ and } \mspan\{v:\supp(v)\subset \ran (f)\}.
\]
This follows from \eqref{dagproj}, \eqref{eqpropus}, and from the fact that if $A\subset \N$, then
\[
U_{P_{A}}(v)_n=\begin{cases}v_n&\text{if $n\in A$}\\
                              0 &\text{otherwise.}\end{cases}
\]

\brk\label{rk:s(x)} We will often work with sequences indexed by
infinite countable sets other than $\N$. A bijection $u:\N\to X$ gives rise
to a bounded isomorphism $\alpha\mapsto\alpha u$ between
the bornological vector space $\elli(X,\V)$ of bounded maps from $X$ to the bornological
space $\V$ and the space $\ell^\infty(\V)=\ell^\infty(\N,\V)$. If
$S\subset \ell^\infty$ is a symmetric subspace, we define
$S(X,\V)=\{s u^{-1}:s\in S(\V)\}$. Because $S$ is symmetric by assumption, this definition does
not depend on the choice of $u$. \erk

\begin{nota}\label{nota:noX}
Let $S\subset\elli$ be a symmetric subspace, $X$ an infinite countable set and $\V$ a bornological vector space. We use the following abbreviated notation: $S=S(\N,\C)$, $S(X)=S(X,\C)$ and $S(\V)=S(\N,\V)$.
\end{nota}
\section{The algebras \texorpdfstring{$\Gami(\fA)$}{Gami(fA)} and  \texorpdfstring{$\Gamma(R)$}{Gamma(R)}}\label{sec:gamis}
\setcounter{equation}{0} \numberwithin{equation}{section}

Throughout this section, $\fA$ will be a fixed bornological algebra,
which, except in Definition \ref{defi:gaminuni}, will be assumed
unital. It follows straightforwardly from equations \eqref{defdiag},
\eqref{actionofemb}, and \eqref{defu} that
\begin{equation}
\label{eqcov}
\begin{array}{lcr}
 \diag(f_*(\alpha))U_f=U_f\diag(\alpha) & \text{ and }&
U_f\diag(\alpha)U_{f^\dagger}= \diag(f_*(\alpha)),
\end{array}
\end{equation}
where $\alpha\in\elli(\fA)$ and $f\in\emb$. Set
\begin{equation}
\label{defgamma} \Gami(\fA)=\mspan\{\diag(\alpha)U_f:
\alpha\in\elli(\fA),\ \ f\in \text{$\emb$}\}.
\end{equation}
Notice that, by equations (\ref{eqpropus}) and (\ref{eqcov}),
$\Gami(\fA)$ is a subalgebra of the algebra $\cl(\ell^2(\fA))$.
 For each symmetric ideal $S\triqui\elli$, we write $I_{S(\fA)}$ for the ideal of $\Gami(\fA)$ generated by  $\diag(S(\fA))$. Because $S$ is invariant under the action of $\emb$, then by equations (\ref{eqcov}) we have
\begin{equation}\label{def:is}
I_{S(\fA)}=\mspan\{\diag(\alpha)U_f: \alpha\in S(\fA), f\in \text{$\emb$}\}.
\end{equation}
Note that $\Gami(\fA)=I_{\ell^\infty(\fA)}$. If $X$ is any infinite
countable set, we may also consider the subalgebra
$\Gami(X,\fA)\subset \cB(\ell^2(X,\fA))$ spanned by
$\diag(\elli(X,\fA))$ and $U_{\emb(X)}$. Thus
$\Gami(\fA)=\Gami(\N,\fA)$. In keeping with our notational
conventions \ref{nota:noC} and \ref{nota:noX}, we write
$\Gami=\Gamic$ and $\Gami(X)=\Gami(X,\C)$.

\begin{nota}
Since $\fA$ is assumed to be unital, every sequence $a=\{a_n\}$ in
$\ell^2(\fA)$ can be written uniquely as  $a=\sum_n a_ne_n$,  where
$e_n\in\ell^2(\fA)$ is defined by $(e_n)_i=\delta_{n,i}$. Notice
that the elements of $\Gami(\fA)$ are  $\fA$-linear operators on the
right $\fA$-module $\ell^2(\fA)$. As usual, we identify an
$\fA$-linear operator $A\in \cl(\ell^2(\fA))$ with the infinite
matrix $(A_{ij})_{i,j\in\N}$ with entries in $\fA$ defined by
\[Ae_n=\sum_kA_{kn}e_k.\]
We denote by $E_{ij}$ the matrix
$(E_{ij})_{kl}=\delta_{ik}\delta_{jl}$. Given a matrix
$A=(A_{ij})_{i,j\in\N}$ with entries in $\fA$, and $i,j\in \NN$, we
set:
\begin{gather*}
J_i(A)=\{j: A_{ij}\neq 0\}, I_j(A)=\{i: A_{ij}\neq 0\},\\
 r_i(A)=\# J_i(A), c_j(A):=\#I_j(A),\\
r(A):=\max _i r_i(A),\quad c(A):=\max _i c_i(A),\\
N(A):=\max\{r(A), c(A)\},
\end{gather*}
where $r_i(A),c_j(A),N(A)\in \NN\cup \{\infty\}$. If $R$ is a ring, we write
$\Gamma(R)$  for \emph{Karoubi's cone}
\begin{equation}\label{def:gama}
\Gamma(R)=\{A\in R^{\N\times\N}: N(A)<\infty\mbox{ and }
\{A_{i,j}:i,j\in\N\}\mbox{ is finite }\}.
\end{equation}
It was shown in (\cite{biva}*{Lemma 4.7.1}) that $\Gamma(R)$ is
isomorphic to $R\otimes\Gamma(\Z)$, for any ring $R$. We
shall write
\[
\Gamma=\Gamma(\Z).
\]
Observe that definition \eqref{def:gama} extends to matrices indexed
by any countable infinite set $X$; if $f:\N\to X$ is a bijection,
$\Gamma(X,R)\subset R^{X\times X}$ is the image of $\Gamma(R)$ under
the map $A\mapsto U_f AU_{f^{-1}}$. Thus $\Gamma(R)=\Gamma(\N,R)$;
we shall write $\Gamma(X)=\Gamma(X,\Z)$.
\end{nota}

The following lemmas will be useful in obtaining characterizations
of $\Gami(\fA)$, $I_{S(\fA)}$ and $\Gamma(R)$ as rings of matrices
acting on $\ell^2(\fA)$ and  $R^{(\N)}$, respectively. If $A\in
R^{\N\times\N}$ is such that $N(A)<\infty$, we write
$\Gamma(R)A\Gamma(R)$ to denote the set
\[
\Gamma(R)A\Gamma(R):=\{\sum_{j=1}^nP_jAQ_j:P_j, Q_j\in \Gamma(R) \mbox{ for all }j=1,\dots,n\mbox{ and } n\in\NN\}.
\]

\begin{lem}\label{elusive}

Let $R$ be a unital ring, $A=(A_{ij})_{i,j\in\NN}\in R^{\N\times\N}$ a matrix such that $N(A)<\infty$ and $r(A)>1$. Then
\be
\item $A=A_1+A_2+\cdots +A_k,$ where   $A_i\in \Gamma(R)A\Gamma(R)$, $r(A_i)< r(A)$
and $c(A_i)\leq c(A)$ for all $i=1,\dots,k$.
\item  If in addition $R$ is a unital bornological algebra and $S\triqui\elli$ is a symmetric ideal  such
that $\{A_{ij}\}\in S(\N\times\N,R)$, then $\{(A_l)_{ij}\}\in S(\N\times\N,R)$, for all $l=1,\dots, k$.
\ee
\end{lem}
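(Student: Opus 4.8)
The plan is to prove both parts simultaneously by an explicit combinatorial decomposition of the matrix $A$ according to the "column classes" of its support. Since $N(A)<\infty$, every row of $A$ has at most $r(A)$ nonzero entries; the idea is to split the index set of columns so that, in each piece $A_i$, every row meets at most one of the relevant columns in a controlled way, thereby forcing $r(A_i)<r(A)$.

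Here is the construction I would carry out. For each row index $i$ with $r_i(A)=r(A)$ (the "full" rows), list the elements of $J_i(A)=\{j:A_{ij}\neq 0\}$; since $r(A)>1$ there are at least two. The goal is to produce a finite coloring $\chi\colon\N\to\{1,\dots,k\}$ of the column indices such that for every full row $i$, the set $J_i(A)$ is not monochromatic — equivalently, $J_i(A)$ meets at least two color classes — while keeping $k$ bounded by something depending only on $N(A)$. Given such a coloring, set $A_\ell := A\cdot D_\ell$ where $D_\ell=\sum_{j:\chi(j)=\ell}E_{jj}$ is the diagonal projection onto the columns of color $\ell$. Each $D_\ell\in\Gamma(R)$ (it is a $0/1$ diagonal matrix, so $N(D_\ell)\le 1$ and its entries lie in the finite set $\{0,1\}\subset R$), hence $A_\ell=A D_\ell\in\Gamma(R)A\Gamma(R)$ using, say, the identity matrix on the left. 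Clearly $A=\sum_\ell A_\ell$, $c(A_\ell)\le c(A)$ since we only deleted columns, and $r_i(A_\ell)\le r_i(A)$ for every $i$; moreover for a full row $i$ we have $r_i(A_\ell)<r_i(A)=r(A)$ because $J_i(A)$ is not contained in color class $\ell$. For non-full rows $r_i(A)<r(A)$ already, so $r(A_\ell)<r(A)$ for all $\ell$, proving (1).

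The existence of the coloring is the crux. I would get it by a greedy/compactness argument or by a direct construction: since $N(A)\le N$, each column $j$ "interacts" (shares a full row) with at most $N(N-1)$ other columns, so the graph $G$ on column indices whose edges join columns that both appear in some common full row has degree bounded by $N(N-1)$; it therefore admits a proper vertex coloring with $k\le N(N-1)+1$ colors. A proper coloring of $G$ makes every edge bichromatic, hence every full row's column set $J_i(A)$ (which has size $\ge 2$ and spans a clique, or at least an edge) is non-monochromatic. Actually one only needs that $J_i(A)$ is not monochromatic, which a proper coloring of $G$ guarantees since any two vertices of $J_i(A)$ are adjacent in $G$. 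The bound $k\le N(A)(N(A)-1)+1$ depends only on $N(A)$, as required implicitly by the induction in which this lemma is used. I expect the main obstacle to be purely bookkeeping: verifying that $D_\ell$ and the products stay inside Karoubi's cone (finiteness of the entry set is automatic since we add no new entries, only $0$'s and $1$'s), and making the greedy coloring argument clean without circular dependence on $A$'s size.

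For part (2), this is now immediate from the construction. Each $A_\ell$ is obtained from $A$ by replacing some entries by $0$: concretely $(A_\ell)_{ij}=A_{ij}$ if $\chi(j)=\ell$ and $0$ otherwise. Fix a disk $D\subset R$ with $\{A_{ij}\}\subset R_D$ and $\|A_{\bullet\bullet}\|_D\in S$ (reindexing $\N\times\N$ by $\N$ via any bijection, as in Remark \ref{rk:s(x)} and Notation \ref{nota:noX}). Then the matrix of norms $(\|(A_\ell)_{ij}\|_D)$ is obtained from $(\|A_{ij}\|_D)\in S$ by setting some coordinates to $0$; since $S\triqui\elli$ is an \emph{ideal} it is in particular closed under pointwise multiplication by the $0/1$ sequence $\chi^{-1}(\ell)$, so $\|(A_\ell)_{\bullet\bullet}\|_D\in S$, and of course $\{(A_\ell)_{ij}\}\subset R_D$ still. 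Hence $\{(A_\ell)_{ij}\}\in S(\N\times\N,R)$ for all $\ell$, which is exactly the claim once we note $k$ of the previous paragraph equals the $k$ here. No further work is needed beyond observing that the symmetric ideal property gives exactly the "coordinate-projection" stability used.
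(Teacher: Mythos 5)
Your proof is correct, and it takes a genuinely different route from the paper's. The paper proves part (1) by a sequence of normalizations (keeping only the rows with exactly $r(A)$ nonzero entries, splitting them into at most $c(A)$ groups according to the position $h_i(1)$ of the first nonzero entry, reordering rows by a permutation $U_f$) followed by an induction on the auxiliary quantity $M_A=\max_j\#\{i: A_{ih_j(1)}\neq 0\}$, peeling off at each step a matrix $B$ with $M_B=1$; accordingly, its part (2) is justified by the fact that the summands arise from $A$ by multiplication by bounded sequences \emph{and} by permutations of terms, so the symmetry of $S$ is invoked. You instead split the \emph{columns}: the graph on column indices joining two columns that meet a common full row has degree at most $c(A)(r(A)-1)$, a greedy coloring of this countable bounded-degree graph (no compactness needed) gives a proper coloring with $k\le c(A)(r(A)-1)+1$ colors, each $J_i(A)$ of a full row is a clique and hence meets each color class at most once, and $A_\ell=AD_\ell$ with $D_\ell\in\Gamma(R)$ a diagonal $0/1$ idempotent does the job. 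This buys several things: an explicit bound on $k$ in terms of $N(A)$, summands obtained by right multiplication by diagonal idempotents only (so in each full row you even get $r_i(A_\ell)\le 1$, not just $<r(A)$), and a part (2) that uses only the ideal property of $S\triqui\elli$ (multiplication by $0/1$ sequences), with no appeal to symmetry. Both arguments are complete; yours is arguably cleaner, while the paper's normalization-and-induction scheme produces summands adapted to the polar-decomposition and crossed-product bookkeeping used later, but nothing downstream (Lemma \ref{bigsum}, Proposition \ref{gamma}) requires more than the statement of the lemma, so your decomposition would serve equally well.
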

\bpf
(1) We first establish some notation and make some reductions. Let
\begin{gather*}
 r=r(A)\\
I=\{i\in \NN: \text{ the $i^{th}$ row of $A$ has $r$ nonzero
entries}\}.
\end{gather*}
For $i\in I$, let
\[
h_i(1)<h_i(2)<\dots<h_i(r)
\]
be the columns where the nonzero entries of row $i$ occur. Let $A_r$
denote the matrix obtained from $A$ upon multiplying by zero those
rows that have less than $r$ nonzero entries. Then
$A_r\in\Gamma(R)A\Gamma(R)$, and
\[ r(A_r)= r,\ r(A-A_r)<r, \  c(A_r)\leq c(A),\mbox{ and }c(A-A_r)\leq c(A)   .\]
Thus it suffices to prove (1) for $A_r$. Hence we may assume that $A=A_r$, that is, that all nonzero rows of $A$ have exactly $r$ nonzero entries. Furthermore, since there are at most $c(A)$ nonzero entries in each
column of $A$,  the set $I$ can be written as a disjoint union $I=I_1\sqcup
I_2\sqcup\cdots\sqcup I_s$ with $s\leq c(A)$ and such that each $I_t$ ($1\le t\le s$) satisfies the following property:
$$i\neq j\in I_t\Rightarrow h_i(1)\neq h_j(1).$$
Proceeding as above we see that we may assume that $s=1$. Notice
that if $A'$ is obtained from $A$ by permuting its rows, then
$A'=U_fA$ for some bijection $f:\NN\rightarrow\NN$. Therefore, $
\Gamma(R)A\Gamma(R)=\Gamma(R)A'\Gamma(R)$, $r(A')=r(A)$, and
$c(A')=c(A)$, so we may assume that $A=A'$.  Thus we will assume
that the rows of $A$ are ordered so that if $i,j\in I$, then
$h_i(1)<h_j(1)$ if and only if $i<j$.

Thus, it only remains to show (1) for matrices $A$ such that
for $I$ and $h_i$ as above:
\begin{align}
\text{a) } &\mbox{All nonzero rows of $A$ have exactly $r$ nonzero entries.}\label{nonzero}\\
\text{b) } &i<j\iff h_i(1)<h_j(1)\mbox{ for all }i,j\in
I.\label{hinc}
\end{align}
We shall proceed by induction on
\[M_A= \max_{j\in I} \# \{i\in I: A_{ih_j(1)}\neq 0\}.\]
Notice that the right-hand side of the equation above is bounded by
$c(A)$, so $M_A\in\NN$. First assume that $M_A=1$. Then for all $i,j\in I$ we have that  $A_{ih_j(1)}\neq 0$ if and only if $i=j$. Set
\[A_1=\sum_{i\in I}A_{ih_i(1)}E_{ih_i(1)}=\big(\sum_{i\in I}E_{ii}\big)A\big(\sum_{j\in I}E_{h_j(1)h_j(1)}\big)\in\Gamma(R)A\Gamma(R).\]
Then
\[
r(A_1)< r,\   r(A-A_1) < r, \    c(A_1)\leq c(A),\mbox{ and } c(A-A_1)\leq c(A),
\]
so the statement in (1) holds for $A$. Assume now that $M_A>1$ and
that (1) holds for matrices $B$ satisfying \ref{nonzero} and
\ref{hinc}, and such that $M_B<M_A$. Let
\[
i_1:=\min I,\   K_1:=\{j\in I: A_{i_1h_j(1)\neq 0}\}.
\]
For $ n\geq 1$  such that $ \bigcup_{j=1}^{n-1}K_j\neq I$, let
\[
i_n:=\min I\setminus\bigcup_{j=1}^{n-1}K_j, \mbox{ and }
K_n:=\{j\in  I\setminus\bigcup_{l=1}^{n-1}K_l: A_{i_nh_j(1)} \neq 0\}.
\]
Let
\[\cJ=\begin{cases} \{1,2,\dots, n\},\text{ if }   \bigcup_{j=1}^{n}K_j=I.\\
                     \NN,\text{ otherwise.}\end{cases}    \]

We claim that
\begin{equation}\label{inc}
\text{a) }i_{n}>i_{n-1}\ \forall n\in \cJ \quad \text{ and
}\quad\text{  b) }\quad I=\bigcup_{j\in \cJ}K_j.
\end{equation}
In fact a) follows from the inequality
\[i_{n}=\min I\setminus \bigcup_1^{n-1}K_j \geq \min I\setminus  \bigcup_1^{n-2}K_j=i_{n-1}\]
and the fact that $i_n\neq i_{n-1}$ because $i_{n}\not\in K_{n-1}$
and $i_{n-1}\in K_{n-1}$. It is clear that b) holds when $\cJ$ is
finite. Assume now that $\cJ$  infinite. If $k\in I$, then either
$k\in \{i_n:n\in\cJ\}\subset \bigcup K_j$ or, by a), there exists
$n\in\cJ$ such that
\[k< i_n=\min I\setminus \bigcup_1^{n-1}K_j.\]
This implies that $k\in \bigcup_1^{n-1}K_j$. Thus b) holds also when
$\cJ$ is infinite, and both claims are proven. Now set
\[B:=\sum_{n\in \cJ,j\in \NN}A_{i_nj}E_{i_nj}=\big(\sum_{n\in \cJ}E_{i_ni_n}\big)A\in\Gamma(R)A\Gamma(R).\]
Notice that $B$ is obtained from $A$ by multiplying by zero the
$i^{th}$ row whenever $i\not\in\{i_n:n\in\cJ\}$. Therefore $B$
satisfies \ref{nonzero} and \ref{hinc}, $r(B)=r$, and $c(B)\leq
c(A)$. We next show that $M_B=1$. We begin by noting that  $B_{i_mi_n(1)}\neq 0$ implies that
$A_{i_mi_n(1)}\neq 0$. Then $i_n(1)\geq i_m(1)$, which implies by
\ref{hinc} that $i_n\geq i_m$, which in turn implies, by part a) of
equation (\ref{inc}), that $n\geq m$. Now, if $n>m$ we would have
\[i_n\not\in  \bigcup_1^{n-1}K_j \supseteq\bigcup_1^{m}K_j. \]
Then $i_n\not \in K_m$ and $i_n\not \in \bigcup_1^{m-1}K_j$, which
implies that $A _{i_mi_n(1)}=0$, a contradiction. Thus $n=m$ and $M_B=1$, as claimed.
Set $C=A-B$; we have $r(C)=r$ and  $c(C)\leq c(A)$.
 Notice that $C$ is obtained from $A$ upon multiplying by zero the $i_n^{th}$ row for all $n\in\cJ$. Besides,
 the $i^{th}$ row of $C$  is nonzero if and  only if $i\in I_C:=I\setminus\{i_n:n\in \cJ\}$, and in that case it is equal to the $i^{th}$ row of $A$. Therefore, $C$ satisfies \ref{nonzero} and \ref{hinc}.
We next prove that $M_C<M_A$, which will conclude the proof of part
(1).  If $i, j\in I_C$, then $A_{ih_j(1)} = 0$ implies that
$C_{ih_j(1)} = 0$. On the other hand,  by part b) of equation
(\ref{inc}), we can choose $n\in \cJ$  such that  $j\in K_n$. Then
$A_{i_nh_j(1)}\neq0$,  whereas $C_{i_nh_j(1)}=0$. It follows that
$M_C\leq M_A-1$. This concludes the proof of part (1). Part (2)
holds because for $l=1,\dots,k$, $\{(A_l)_{ij}\}$  is obtained upon
multiplication of $\{A_{ij}\}$ by  bounded sequences and by
permutations of  terms. \epf

\blemma \label{bigsum} Let $A=(\aij)_{i,j\in \NN}$ be a matrix with
entries in a unital ring $R$ such that $N(A)<\infty $. Then \be
\item $A=A_1+A_2+\cdots+ A_k,$  where $A_i\in \Gamma(R)A\Gamma(R)$, and
 $N(A_i)\leq 1$,  for all $i=1,\dots,k$.
\item  If in addition $R$ is a bornological algebra and $S\triqui\elli$ is a symmetric ideal such that $\{A_{ij}\}\in S(\N\times\N, R)$, then $\{(A_l)_{ij}\}\in S(\N\times\N,R)$, for all $l=1,\dots, k$.
\ee

\bpf
 Use Lemma \ref{elusive} and proceed by induction on $r(A)$  to write
\[A=\sum_1^k B_i,\ \text{ where } r(B_i)=1,\ c(B_i)\leq c(A),\ \text{ and } B_i\in
 \Gamma(R)A\Gamma(R).\]

Next apply the same procedure to each transpose matrix $B_i^t$
to get the decomposition in (1). The second statement follows from
the second part of Lemma \ref{elusive}. \epf

\elemma

\bprop \label{Nisone} Let $A=(\aij)_{i,j\in \NN}$ be a matrix with
entries in a ring $R$. Then $N(A)\leq 1$ if and only if
$A=\diag(\alpha)U_f$, where $f\in\emb$ and $\alpha\in R^\N$ are
defined as follows:
\[
f(j)=i\iff A_{ij}\neq 0 \quad \alpha(i)=\begin{cases}
A_{ij}, & \text{ if } i=f(j)\\
0, & \text{otherwise.}
\end{cases}
\]
\eprop \bpf

For $f$ and $\alpha$ as in the proposition, the $n^{th}$ column of
$A$ is
\begin{align*}
(\diag(\alpha)U_f)(e_n)=&\begin{cases}\alpha(n)e_{f(n)},&\text{ if } n\in\dom(f)\\ 0,& \text{otherwise.}    \end{cases}\\
 =&\begin{cases}A_{f(n)n}e_{f(n)},&\text{ if } n\in \dom(f)\\ 0,& \text{otherwise.}    \end{cases}
\end{align*}
\epf
\comment{
The following proposition is well known. We
include a proof since we have not been able to find one in the
literature.

\begin{prop}\label{lem:gamni}
Let $R$ be a ring. Then the set
\[
\{\diag(\alpha) U_f: f\in\emb\mbox{ and }\{\alpha_n:n\in\N\}\subset R\mbox{ finite }\}
\]
generates $\Gamma(R)$ as an abelian group.

\end{prop}
\bpf Let $X_R\subset\Gamma(R)$ be the set of elements of the
proposition. By \cite{biva}*{Lemma 4.7.1}, the map
$\phi:\Gamma\Z\otimes R\to \Gamma(R)$, $\phi(A\otimes
x)_{i,j}=A_{i,j}x$ is an isomorphism. This map sends the subgroup
generated by the elements $x\otimes r$ ($x\in X_\Z$, $r\in R$) onto
the subgroup generated by $X_R$. Thus it suffices to prove the
Proposition for $R=\Z$. Let $A\in\Gamma(\Z)$. By Lemma \ref{bigsum}
and Proposition \ref{Nisone}, $A=\sum_kA_k$, where $\Gamma\owns
A_k=\diag(\alpha^k)U_{f_k}$, for some $\Z$-valued sequence
$\alpha^k$. Besides, $A_k\in\Gamma(\Z)$ implies that the set
$\{\alpha^k_n:n\in\NN\}$ is finite. \epf

\begin{coro}
Let $\fA$ be a unital bornological algebra. Then  Karoubi's cone
$\Gamma(\fA)$ is a subalgebra of $\Gami(\fA)$.
\end{coro}
}
\bprop \label{gamma} Let $\fA$ be a unital bornological algebra,
$S\triqui\elli$ a symmetric ideal, and $I_{S(\fA)}\triqui\Gami(\fA)$
the ideal defined in equation \eqref{def:is}. Then
\begin{equation}\label{eq:mis}
I_{S(\fA)}=\{A=(\aij)_{i,j\in \NN}: \{\aij\}\in S(\N\times\N)\mbox{ and }
N(A)<\infty\}.
\end{equation}
\eprop

\bpf Let $D_S$ denote the set on the right hand side of equation
(\ref{eq:mis}). By Lemma \ref{bigsum} and Proposition \ref{Nisone},
a matrix $A$ belongs to $D_S$ if and only if $A=\sum A_k$, with
$A_k=\diag(\alpha_k)U_{f_k}\in D_S$. Further, we may choose $\alpha_k$ and $f_k$
such that $\supp(\alpha_k)=\ran(f_k)$. Under these conditions, $A_k\in D_S$ if
and only if $\alpha^k\in S$. This shows that
$A\in D_S$ if and only  $A\in I_S$.\epf

\begin{coro}
Let $\fA$ be a unital bornological algebra. Then  Karoubi's cone
$\Gamma(\fA)$ is a subalgebra of $\Gami(\fA)$.
\end{coro}

\begin{defi}\label{defi:gaminuni}
If $\fA$ is a not necessarily unital bornological algebra, and
$S\triqui\elli$ is a symmetric ideal, $I_{S(\fA)}$ is defined by
\eqref{eq:mis}.
\end{defi}

\begin{exa}\label{exa:minf}
Let
\[
c_f=\{\alpha\in\elli: \supp(\alpha)\text{ is finite }\}.
\]
Then
\begin{align*}
I_{c_f(\fA)}=M_\infty(\fA)=\{A:\exists n\in \NN \mbox{ such that } A_{ij}=0 \  \mbox{if either $i>n$ or $j>n$ } \}.
\end{align*}
We shall write $M_\infty=M_\infty\Z$.
\end{exa}

\brk\label{inid} {\rm Let $\fA$ be a unital bornological algebra, $I\triqui\Gami(\fA)$ a two-sided ideal and $T\in I$.
Then by Lemma
\ref{bigsum} and Remark \ref{Nisone}, we can write
\begin{equation}\label{laT}
T=\sum_{i=1}^n\diag(\alpha^i)U_{f_i}\text{ with } \diag(\alpha^i)U_{f_i}\in I,
\end{equation}
where $f_i\in\emb$ and $\alpha^i\in\elli(\fA)$. Similarly, if $R$ is a unital ring and $T\in I\triqui\Gamma(R)$, then we can also
write $T$ as in $\eqref{laT}$ but now with $\alpha^i$ such that the set
$\{\alpha^i_n:n\in \NN\}\subset R $ is finite.}
\erk

\section{The two-sided ideals of \topdf{$\Gami$}{Gami(C)} and those of \topdf{$\cB(\ell^2(\N))$}{B(H)}}\label{sec:ideals}

Calkin's theorem in \cite{calk}*{Theorem 1.6}), as restated by Garling in \cite{garling}*{Theorem ~1}, establishes  a bijective
correspondence between the set of proper two-sided ideals of
$\cB=\cB(\ell^2)$ and the set of proper symmetric ideals of $\elli$. Calkin defined this correspondence in
terms of the sequence of singular values of a compact operator. It
can also be described as follows: an ideal $J\triqui\cB$ is mapped
to the symmetric ideal
\begin{equation}\label{S(J)}
S(J)=\{\alpha\in\elli: \diag(\alpha)\in J\}.
\end{equation}
The inverse correspondence maps a symmetric ideal $S$ in $\elli$ to
the two-sided ideal
\begin{equation}\label{JS}
\cB\triangleright J_S=\langle\diag(\alpha):\alpha\in S\rangle
\end{equation}
We refer the reader to \cite{simon}*{Theorem 2.5} for further details.
Recall that, by another result of Calkin \cite{calk}*{Theorem 1.4},
the Calkin algebra $\cB/\cK$ is simple. On the other hand, it is
easily checked that $c_0\triqui\elli$ is maximal among proper
symmetric ideals. Thus, by mapping $\elli$ to $\cB$ we extend the
correspondence above to a bijection between the family of symmetric ideals of
$\elli$ and that of two-sided ideals of $\cB$. In Theorem
\ref{calkinforgami} below we show that Calkin's correspondence
carries over to ideals in $\Gami$. We will make use of the following
lemma.

\begin{lem} \label{poldecgami}  Let $\alpha\in\elli$, $f\in\emb$ and let $I\triqui\Gami$ a two-sided ideal.
Consider the operator
\[
T=\diag(\alpha)U_f.
\]
Then
\[
T\in I \iff |T|\in I.
\]
\end{lem}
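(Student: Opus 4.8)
The plan is to analyze the polar decomposition of $T=\diag(\alpha)U_f$ explicitly. Since $N(T)\le 1$ (indeed $T$ has at most one nonzero entry in each row and column), the operator $|T|=(T^*T)^{1/2}$ should again be of the form $\diag(\beta)U_g$ for an explicit $\beta\in\elli$ and $g\in\emb$, and moreover $g$ will be a projection $P_A$ (i.e. $U_g$ is a diagonal projection). Concretely, $T^*T=U_{f^\dagger}\diag(\bar\alpha)\diag(\alpha)U_f=U_{f^\dagger}\diag(|\alpha|^2)U_f=\diag(f^\dagger_*(|\alpha|^2))$ by \eqref{eqcov}, so $|T|=\diag(f^\dagger_*(|\alpha|))=\diag(\beta)$ with $\beta=f^\dagger_*(|\alpha|)\in\elli$ (note $\beta\in\elli$ since $|\alpha|\in\elli$ and $\elli$ is symmetric). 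So in fact $|T|=\diag(\beta)$ is already diagonal; the task reduces to showing $\diag(\alpha)U_f\in I\iff\diag(\beta)\in I$.

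First I would record the two defining identities relating $T$ and $|T|$ inside $\Gami$: the polar decomposition $T=U_f|T|$ — more precisely one checks $\diag(\alpha)U_f=U_f\diag(f^\dagger_*(|\alpha|))\cdot W$ where $W=\diag(\omega)$ is the diagonal of phases $\omega_i=\alpha_i/|\alpha_i|$ (set to $0$ where $\alpha_i=0$), so $T=U_f W|T|$ with $W,U_f\in\Gami$ — and conversely $|T|=U_{f^\dagger}(W^{-1}_{\mathrm{partial}})T$, where the reverse multiplier is again a diagonal of phases supported on $\ran(f)$, hence lies in $\Gami$. Here I must be slightly careful: $W$ is not invertible, but $W^*W=P_{\supp\alpha}$ and $W$ acts as the identity phase on $\supp\alpha$, which is exactly the support of the relevant projections, so $U_{f^\dagger}W^*T = U_{f^\dagger}W^*\diag(\alpha)U_f = U_{f^\dagger}\diag(|\alpha|)U_f = \diag(f^\dagger_*(|\alpha|)) = |T|$. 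Both $\omega$ and $\bar\omega$ are bounded sequences, so $\diag(\omega),\diag(\bar\omega)\in\Gami$.

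Given these two identities, the lemma is immediate from the fact that $I$ is a two-sided ideal: if $T\in I$ then $|T|=U_{f^\dagger}\diag(\bar\omega)T\in I$ since $U_{f^\dagger},\diag(\bar\omega)\in\Gami$; conversely if $|T|\in I$ then $T=U_f\diag(\omega)|T|\in I$ since $U_f,\diag(\omega)\in\Gami$. I would present this as a short two-line deduction after establishing the identities.

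The only real obstacle is bookkeeping around supports and the non-invertibility of the phase multiplier — making sure the partial isometry $U_f$, the diagonal phase $\diag(\omega)$, and the diagonal $|T|=\diag(f^\dagger_*(|\alpha|))$ have compatible supports so that the products genuinely recover $T$ and $|T|$ on the nose (not merely up to a projection). This is handled cleanly by using \eqref{dagproj} and \eqref{eqcov}: $U_fU_{f^\dagger}=P_{\ran f}$ kills exactly the part of $\ran f$ outside the image, which already carries no mass from $\diag(\alpha)U_f$, and $f^\dagger_*(|\alpha|)$ is supported in $\ran f$ as well. I would also remark that the argument shows more: the same computation identifies $|T|$ with an explicit diagonal operator $\diag(f^\dagger_*(|\alpha|))$, which will be convenient in the proof of Theorem \ref{calkinforgami}.
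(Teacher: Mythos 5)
Your overall strategy is exactly the paper's: compute $T^*T=\diag(f^\dagger_*(|\alpha|^2))$ via \eqref{eqcov}, identify $|T|=\diag(f^\dagger_*(|\alpha|))$, factor $T$ and $|T|$ through each other by elements of $\Gami$, and invoke the two-sided ideal property. The direction $T\in I\Rightarrow|T|\in I$ is correct as you wrote it: $|T|=U_{f^\dagger}\diag(\bar\omega)T$ does hold, since $\diag(\bar\omega)\diag(\alpha)=\diag(|\alpha|)$ and then \eqref{eqcov} applies; this is precisely $|T|=V^*T$ for the partial isometry $V=\diag(\omega)U_f$.

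However, the factorization you use for the converse is false as stated. You claim $T=U_f\diag(\omega)|T|$ with $\omega_i=\alpha_i/|\alpha_i|$, but by \eqref{eqcov} we have $U_f\diag(\omega)=\diag(f_*(\omega))U_f$, so the $(f(n),n)$ entry of $U_f\diag(\omega)|T|$ is $(\alpha_n/|\alpha_n|)\,|\alpha_{f(n)}|$, whereas the corresponding entry of $T$ is $\alpha_{f(n)}$; these differ unless the phase of $\alpha$ is constant along $f$ (e.g. take $f(n)=n+1$, $\alpha_1=1$, $\alpha_2=-1$: then $T e_1=-e_2$ but $U_f\diag(\omega)|T|e_1=+e_2$). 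The phase diagonal must sit on the \emph{left} of $U_f$: with $V=\diag(\omega)U_f\in\Gami$ one has $T=V|T|$ (equivalently $T=U_f\diag(f^\dagger_*(\omega))|T|$, with the phases pulled back along $f$), and then $|T|\in I\Rightarrow T=V|T|\in I$. With this one-line correction your argument coincides with the paper's proof, which takes $V=\diag(\nu_\alpha)U_f$, $\nu_\alpha$ the phase sequence of $\alpha$, and uses $T=V|T|$, $|T|=V^*T$.
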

\bpf We have
\[T^*T=U^*_f\ \diag(|\alpha|^2)\ U_f=\diag(f^\dagger_*(|\alpha|^2))=\diag(|f^\dagger_*(\alpha)|^2). \]
Therefore, $|T|=\diag(|f^\dagger_*(\alpha)|)$, and the polar decomposition of $T $ is $T=V|T|$, where
\[V=\diag(\nu_{\alpha})U_f,\] for
\begin{equation}\label{eq:nualfa}
 \nu_{\alpha}(n)=
\begin{cases}
0, &\text{if $\alpha(n)=0$}\\
\frac{\alpha(n)}{|\alpha(n)|}, &\text{otherwise.}
\end{cases}
\end{equation}

It is now clear that $V\in \Gami$. Thus $T\in I$ if and only if
$|T|\in I$, since $\Gami$ is a $*$-algebra and $|T|=V^*T$. \epf

\begin{thm}
\label{calkinforgami}
\item[i)] The map $S\mapsto I_S$ is a bijection between the set of symmetric ideals of $\elli$ and the set of two-sided ideals of $\Gami$. Its inverse maps an ideal $I\triqui \Gami$ to the symmetric ideal $S(I)$ defined as in \eqref{S(J)}.
\item[ii)] The map $J\mapsto J\cap\Gami$ is a bijection between the sets of two-sided ideals of
$\cB$ and those of $\Gami$. Its inverse maps an ideal $I\triqui\Gami$ to
the two-sided ideal of $\cB$ it generates.
\item[iii)] If $S\triqui\elli$ is a symmetric ideal, then $J_S\cap \Gami=I_S$.

\end{thm}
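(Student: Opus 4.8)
The three statements are tightly linked, and the plan is to prove (i) first, then deduce (iii), and finally (ii). For (i), I would set up the two maps $S \mapsto I_S$ and $I \mapsto S(I)$ and show they are mutually inverse and inclusion-preserving. That $S(I_S) = S$ is essentially immediate from the definitions: $\diag(\alpha) \in I_S$ forces $\alpha \in S$ by Proposition \ref{gamma} (looking at the diagonal entries), and conversely $\alpha \in S$ clearly gives $\diag(\alpha) \in I_S$. The substantial direction is $I_{S(I)} = I$, i.e. that every two-sided ideal $I \triqui \Gami$ is generated by its diagonal part. Here is where Remark \ref{inid} and Lemma \ref{poldecgami} do the work: given $T \in I$, write $T = \sum_{i=1}^n \diag(\alpha^i) U_{f_i}$ with each summand $\diag(\alpha^i) U_{f_i} \in I$; by Lemma \ref{poldecgami} each $|T_i| = \diag(|f_i^\dagger{}_*(\alpha^i)|) \in I$, so the sequence $|f_i^\dagger{}_*(\alpha^i)|$ lies in $S(I)$, hence so does $\alpha^i$ (it differs from $f^\dagger_*$ of it only by the action of $\emb$, and $S(I)$ is symmetric — one should check $S(I)$ really is a symmetric ideal of $\elli$, which follows from \eqref{eqcov}). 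Therefore $\diag(\alpha^i) \in I_{S(I)}$ and $\diag(\alpha^i)U_{f_i} \in I_{S(I)}$, giving $T \in I_{S(I)}$; the reverse inclusion $I_{S(I)} \subseteq I$ is clear since $\diag(S(I)) \subseteq I$ by definition of $S(I)$ and $I$ is an ideal. Monotonicity of both maps is transparent, so (i) follows.

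For (iii), I would argue $J_S \cap \Gami = I_S$ by double inclusion. The inclusion $I_S \subseteq J_S \cap \Gami$ is immediate: $I_S \subseteq \Gami$ by construction, and $\diag(S) \subseteq J_S$ by \eqref{JS}, so the ideal $I_S$ of $\Gami$ generated by $\diag(S(\fA))$ lands in $J_S$. For the reverse inclusion, take $T \in J_S \cap \Gami$ and again write $T = \sum_i \diag(\alpha^i) U_{f_i}$ with each summand in $J_S \cap \Gami$ (using Remark \ref{inid} applied in $\Gami$, noting that the decomposition there has each piece in the ideal of $\Gami$ generated by $T$, hence in $J_S \cap \Gami$). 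By Lemma \ref{poldecgami} — whose polar-decomposition computation is valid in $\cB$ as well — each $|\,\diag(\alpha^i)U_{f_i}\,| = \diag(|f_i^\dagger{}_*(\alpha^i)|)$ lies in $J_S$, so $|f_i^\dagger{}_*(\alpha^i)| \in S(J_S) = S$ by Calkin's theorem, whence $\alpha^i \in S$ by symmetry of $S$. Thus each summand lies in $I_S$, and $T \in I_S$.

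For (ii), I would first record that $S(J \cap \Gami) = S(J)$ for any two-sided ideal $J \triqui \cB$: indeed $\diag(\alpha) \in \Gami$ always, so $\diag(\alpha) \in J$ iff $\diag(\alpha) \in J \cap \Gami$. Now compose bijections: $J \mapsto S(J)$ is Calkin's bijection (between two-sided ideals of $\cB$ and symmetric ideals of $\elli$, extended to include $\elli \leftrightarrow \cB$ as noted before the lemma), $S \mapsto I_S$ is the bijection of (i), and the composite $J \mapsto S(J) \mapsto I_{S(J)}$ equals $J \mapsto J \cap \Gami$ because $I_{S(J)} = J_{S(J)} \cap \Gami$ by (iii) and $J_{S(J)} = J$ by Calkin. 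Being a composite of bijections, $J \mapsto J \cap \Gami$ is a bijection; and chasing through, its inverse sends $I \triqui \Gami$ to $J_{S(I)}$, which is the two-sided ideal of $\cB$ generated by $\diag(S(I))$, and since $I = I_{S(I)}$ is generated as a $\Gami$-ideal by $\diag(S(I))$, one checks $J_{S(I)}$ is also the two-sided ideal of $\cB$ generated by $I$ itself (every generator $\diag(\alpha)U_f$ of $I$ lies in $J_{S(I)}$, and conversely $\diag(S(I)) \subseteq I$).

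The main obstacle is the single recurring point: controlling the "off-diagonal" part. Concretely, one must be sure that in the decomposition $T = \sum_i \diag(\alpha^i)U_{f_i}$ provided by Remark \ref{inid}, each individual summand genuinely lies in the ideal under consideration (not merely their sum) — this is exactly what Remark \ref{inid} asserts, via Lemma \ref{bigsum}, so it is available — and then that Lemma \ref{poldecgami} correctly strips off the partial isometry $U_{f_i}$ to leave a diagonal operator in the ideal, reducing everything to a statement purely about symmetric ideals of $\elli$, where Calkin's theorem applies. Everything else is bookkeeping with the two inverse maps.
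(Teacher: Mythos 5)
Your proposal is correct and follows essentially the same route as the paper: both rest on the decomposition of Remark \ref{inid}, the polar-decomposition Lemma \ref{poldecgami}, and Calkin's theorem, proving i), then iii), then ii) by composing with Calkin's correspondence. The only cosmetic differences are that you check $S(I_S)=S$ directly from Proposition \ref{gamma} where the paper uses the sandwich $S\subset S(I_S)\subset S(J_S)=S$, and you prove iii) by a direct double inclusion rather than deducing it from i); these are reorganizations of the same ingredients, not a different argument.
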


\bpf Let $I\triqui\Gami$; write $S=S(I)$. It is clear that
$I_S\subseteq I$. On the other hand, if $T=\diag(\alpha)U_f\in I$,
for some $\alpha\in \elli$ and $f\in  \emb$, then, by Lemma
\ref{poldecgami},
\[
\diag(f^\dagger_*(|\alpha|))=|T|\in I_S.
\]
Hence $T\in I_S$, again by Lemma \ref{poldecgami}. In view of Remark \ref{inid}, this implies that $I=I_S$. We have shown that $I_{S(I)}=I$. Let now $S\triqui\elli$ be a symmetric ideal. Then
\[
S\subset S(I_S)\subset S(J_S)\subset S,
\]
the last inclusion being due to Calkin's theorem. It follows that $S=S(I_S)$, completing the proof of part i).
Next, since the ideal $\langle I_S\rangle\triqui\cB(\ell^2)$ generated by $I_S$ is also generated by $\diag(S)$ we have $\langle I_S\rangle=J_S$, by Calkin's theorem. Now, again by Calkin's theorem,
\[
S\subset S(J_S\cap \Gami)\subset S(J_S)=S.
\]
Thus $J_S\cap\Gami=I_S$, by part i). We have proven part iii) and also shown that $\langle I_S\rangle \cap \Gami=I_S$. Moreover, by parts i) and
iii) we have
\[
\diag(\elli)\cap J_S=\diag(\elli)\cap J_S\cap \Gami=\diag(\elli)\cap I_S=\diag(S).
\]
It follows that $\langle J_S\cap \Gami\rangle=J_S$, which ends the proof.
\epf

It follows from Proposition \ref{gamma}, Example \ref{exa:minf} and Theorem \ref{calkinforgami} that
\[
I\cap\Gamc=M_\infty(\C)
\]
for every proper ideal $I\triqui\Gami$. The next proposition shows that in fact $M_\infty(\C)$ is the only proper ideal of $\Gamc$.

\begin{prop}
\label{idinminf}
Let $k$ be a field. Then $M_\infty(k)$ is the only proper two-sided ideal of $\Gamma(k)$.
\end{prop}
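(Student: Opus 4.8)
The plan is to show that any nonzero two-sided ideal $I\triqui\Gamma(k)$ must contain $M_\infty(k)$, and conversely that $M_\infty(k)$ is proper and is an ideal; the latter is immediate from Example \ref{exa:minf} (with $\fA=k$) together with the fact that $I_{c_f(k)}=M_\infty(k)$ is the ideal attached to the symmetric ideal $c_f\triqui\elli$, which is proper since $c_f\subsetneq\elli$. So the content is the inclusion $M_\infty(k)\subseteq I$ for every nonzero ideal $I$. First I would pick a nonzero $T\in I$ and, using Remark \ref{inid} (applied to the unital ring $R=k$), write $T=\sum_{i=1}^n\diag(\alpha^i)U_{f_i}$ with each summand $\diag(\alpha^i)U_{f_i}\in I$ and each set $\{\alpha^i_m:m\in\N\}\subset k$ finite; discarding zero terms, I may assume some summand $\diag(\alpha)U_f\in I$ is nonzero, so $\alpha_m\neq 0$ for some $m$ in the support of $\alpha$ (equivalently $m\in\ran f$).

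The key step is then to extract a single matrix unit $E_{11}$ (up to a unit of $k$) from $\diag(\alpha)U_f$ by multiplying on left and right by suitable elements of $M_\infty(k)\subseteq\Gamma(k)$ — in fact by permutation-type operators $U_g$ and coordinate projections, which all lie in $\Gamma(k)$. Concretely, if $n_0=f^{-1}(m)$ so that the $(m,n_0)$ entry of $\diag(\alpha)U_f$ is $\alpha_m\neq 0$, then $E_{1m}\,\big(\diag(\alpha)U_f\big)\,E_{n_01}=\alpha_m E_{11}\in I$, and since $\alpha_m$ is a unit in the field $k$, $E_{11}\in I$. Once $E_{11}\in I$, I conjugate by the permutations $U_g$ (transpositions) inside $\Gamma(k)$ to get every $E_{ii}\in I$, and multiply by matrix units to get every $E_{ij}\in I$; since every element of $M_\infty(k)$ is a finite $k$-linear combination of the $E_{ij}$, this yields $M_\infty(k)\subseteq I$.

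To finish, I must rule out $I=\Gamma(k)$, i.e. show $M_\infty(k)\neq\Gamma(k)$ and, more to the point, that no proper ideal is strictly larger than $M_\infty(k)$. For the first, the identity matrix is in $\Gamma(k)$ (it has $N=1$) but not in $M_\infty(k)$. For the second: suppose $I\supsetneq M_\infty(k)$ and take $T\in I\setminus M_\infty(k)$; decomposing $T$ as above, at least one summand $\diag(\alpha)U_f\in I$ fails to lie in $M_\infty(k)$, which forces $\ran(f)$ (equivalently $\supp(\alpha)$, after the normalization in the proof of Proposition \ref{gamma}) to be infinite — here I use that $\{\alpha_m\}$ is a finite subset of $k$, so infinitely many of the $\alpha_m$ equal some fixed nonzero $c\in k$. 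By composing $U_f$ with a bijection $U_g\in\Gamma(k)$ rearranging that infinite subset of $\ran(f)$ onto all of $\N$ and multiplying by $\diag(c^{-1}\cdot\mathbf 1)$, I produce the identity operator (or a projection onto an infinite coordinate set cofinite in $\N$, then the identity after a further such move) inside $I$, so $I=\Gamma(k)$.

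The main obstacle I anticipate is the bookkeeping in the last paragraph: one must be careful that the bijection used to "spread out" an infinite support is realized by an element of $\Gamma(k)$ — it is, since any $U_g$ with $g$ a bijection of $\N$ has $N(U_g)=1$ and constant-valued entry set $\{1\}$ — and that after multiplying by the constant diagonal $\diag(c^{-1}\mathbf 1)$ one genuinely lands on a projection with cofinite support, whence a further conjugation/multiplication gives $\id\in I$. Everything else (matrix-unit juggling, invoking Remark \ref{inid}) is routine, and no bornological subtleties intervene because $\Gamma(k)$ for a field $k$ is purely algebraic.
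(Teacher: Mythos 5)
Your proposal is correct and follows essentially the same route as the paper's proof: extracting matrix units $E_{ij}=(A_{i_0j_0})^{-1}E_{ii_0}AE_{j_0j}$ to get $M_\infty(k)\subseteq I$, then using Remark \ref{inid} and the finiteness of $\im(\alpha)$ to scale a summand $\diag(\alpha)U_f\in I\setminus M_\infty(k)$ down to some $U_{f'}$ with infinite domain and range, and finally producing $1\in I$. The only point to tighten is that last step: one multiplies on \emph{both} sides, $1=U_hU_{f'}U_g$, with $h:\ran(f')\to\N$ and $g:\N\to\dom(f')$ bijections viewed as elements of $\emb$ (not permutations of $\N$); these lie in $\Gamma(k)$ for exactly the reason you give ($N=1$ and entries in $\{0,1\}$), which is precisely the paper's concluding move.
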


\bpf It is well known and easy to check that
$M_\infty(R)\triqui \Gamma(R)$ for any ring $R$. Let $I\ne 0$ be a
two-sided ideal of $\Gamma(k)$, and let $A\neq 0,\ A\in I$. If $i_0$
and $j_0$ are such that $A_{i_0j_0}\neq 0$, then
\begin{equation}
\label{eij}
E_{ij}=(A_{i_0j_0}) ^{-1}E_{ii_0}AE_{j_0j}\in I  \quad\forall i,j
\end{equation}
This shows that $M_\infty(k)\subseteq I$. Assume that the inclusion
is strict. Let $A\in I\setminus M_\infty(k)$. By Remark (\ref{inid}),
we may assume that $A=\diag(\alpha)U_f$ for $f\in\emb$ and
$\alpha\in k^\N$, where $\im (\alpha)= \{\alpha_n:n\in\NN\}$ is
finite and $\supp(\alpha)=\dom f\subset\N$ is infinite. Because $k$
is a field, we can multiply $A$ on the left by a diagonal matrix in
$\Gamma(k)$ to conclude that $U_f\in I$. But since $\ran(f)$ is
infinite, there are bijections $g:\N\to\dom(f)$ and $h:\ran(f)\to\N$
such that $hfg=1$. Hence $I$ must contain $1=U_hU_fU_g$. \epf

\section{\topdf{$\Gami$}{gami} as an infinite sum ring}
\label{sec:misce}
We begin this section by recalling some definitions from \cite{wag}
and \cite{biva}.
 A {\em sum ring} $(R,x_0,x_1,y_0,y_1)$ consists of a unital ring $R$ and elements $x_0,x_1,y_0,$ and $y_1\in R$ satisfying:
\begin{align}
\label{sumring}
y_0x_0=y_1x_1=1\\
x_0y_0+x_1y_1=1. \notag
\end{align}
If $R$ is a sum ring, the map
\begin{equation}
\label{defoplus} \oplus: R\times R\longrightarrow R, \text{ defined
by } r\oplus s=x_0ry_0+x_1sy_1,
\end{equation}
is a unital ring homomorphism. An {\em infinite sum ring} consists
of a sum ring $R$ equipped with a unital ring homomorphism
\begin{equation}
\label{defphi} \Phi:R \longrightarrow R \text{ such that }
r\oplus\Phi(r)=\Phi(r).
\end{equation}

The notion of infinite sum ring was introduced by Wagoner in \cite{wag}. He showed that if $R$ is unital, then the following
is an infinite sum ring:
\[
\Gamma^W(R):=\{A\in R^{\N\times\N}: A\cdot M_\infty R\subset
M_\infty R\supset M_\infty R\cdot A\}.
\]
We may regard $\Gamma^W(R)$ as a multiplier algebra of $M_\infty R$. One checks that a matrix $A\in\Gamma^W(R)$ if and only if every row and every column of $A$ has finite support. Let
\begin{equation}\label{map:fi}
f_i:\N\to\N,\ \ f_i(n)=2n-i \quad (i=0,1)
\end{equation}
The elements $x_i=U_{f^\dagger_i}$, $y_i=U_{f_i}$ satisfy conditions \eqref{sumring}. The homomorphism $\Phi$ is defined by
\begin{equation}\label{eq:elfinfi}
\Phi(A)=\sum_{k=0}^\infty x_1^kx_0Ay_0y_1^k=\sum_{k,i,j}^\infty
A_{ij}E_{2^{k+1}i+2^k-1,2^{k+1}j+2^k-1}.
\end{equation}
This map is well-defined because $(k,i)\mapsto 2^{k+1}i+2^k-1$ is
one-to-one; Wagoner showed in \cite{wag}*{pp 355} that it satisfies
\eqref{defphi}. Observe that the $x_i's$ and $y_i's$ are elements of
$\Gamma(R)$. It is not hard to check, and noticed in
\cite{biva}*{4.8.2}, that $\Phi(\Gamma(R))\subset\Gamma(R)$, whence
$\Gamma(R)$ is an infinite sum ring too. Now we remark that if $\fA$
is a bornological algebra, then
\[
\Gamma(\fA)\subset\Gami(\fA)\subset\Gamma^W(\fA).
\]
Furthermore, $\Phi$ also sends $\Gami(\fA)$ to itself. Thus if $\fA$ is unital, then $\Gami(\fA)$ is an infinite sum ring. We record this in the following
proposition.

\begin{prop}\label{prop:gaminfisuri}
Let $\fA$ be a unital bornological algebra, and
let $f_i$ be as in \eqref{map:fi} and $\Phi$ as in \eqref{eq:elfinfi}
Then \mbox{$(\Gami(\fA),
U_{f^{\dagger}_0},U_{f^{\dagger}_1},U_{f_0},U_{f_1},\Phi)$} is an
infinite sum ring.
\end{prop}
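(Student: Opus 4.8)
The plan is to verify the infinite sum ring axioms directly, leveraging the facts already assembled in this section. First I would check that the five pieces of data are well-defined elements of $\Gami(\fA)$: each $U_{f_i}$ and $U_{f_i^\dagger}$ lies in $\Gami(\fA)$ since $f_i\in\emb$, and $\Phi$ restricts to an endomorphism of $\Gami(\fA)$ — this last point is the one genuinely new claim here. The inclusion $\Gamma(\fA)\subset\Gami(\fA)\subset\Gamma^W(\fA)$ has already been recorded, so for the sum ring relations \eqref{sumring} it suffices to recall that these hold in $\Gamma^W(\fA)$ by Wagoner's construction; concretely $U_{f_i}U_{f_i^\dagger}=U_{f_if_i^\dagger}=P_{\ran f_i}$ and $U_{f_i^\dagger}U_{f_i}=P_{\dom f_i}=P_\N=1$ by \eqref{dagproj} and \eqref{eqpropus}, since each $f_i$ is injective and defined on all of $\N$; and $P_{\ran f_0}+P_{\ran f_1}=1$ because the even and odd numbers partition $\N$ (after the shift $n\mapsto 2n-i$). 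All of these are identities between elements of $\Gami(\fA)$, so they hold there.

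Next I would address $\Phi$. The formula \eqref{eq:elfinfi} exhibits $\Phi(A)=\sum_{k\ge 0}x_1^kx_0Ay_0y_1^k$ with $x_i=U_{f_i^\dagger}$, $y_i=U_{f_i}$; since the supports of the blocks $E_{2^{k+1}i+2^k-1,\,2^{k+1}j+2^k-1}$ are pairwise disjoint as $k$ varies, the infinite sum makes sense entrywise and defines an honest $\N\times\N$ matrix over $\fA$. The content is that if $A=\diag(\alpha)U_f$ with $\alpha\in\elli(\fA)$ and $f\in\emb$, then $\Phi(A)$ again has the form $\diag(\beta)U_g$ for suitable $\beta\in\elli(\fA)$, $g\in\emb$ — indeed by Proposition \ref{Nisone} it is enough to check $N(\Phi(A))\le 1$, and conjugating the single nonzero entry in each row/column of $A$ by the reindexing $(k,i)\mapsto 2^{k+1}i+2^k-1$ keeps at most one nonzero entry in each row and column; the relevant sequence $\beta$ has the same (finite) set of values as $\alpha$, hence is bounded, so $\beta\in\elli(\fA)$. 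By linearity and Lemma \ref{bigsum} together with Remark \ref{inid}, $\Phi$ sends every element of $\Gami(\fA)$ into $\Gami(\fA)$. That $\Phi$ is a unital ring homomorphism and that it satisfies $r\oplus\Phi(r)=\Phi(r)$ is exactly what Wagoner proved in \cite{wag}*{p.~355} for $\Gamma^W(\fA)$, and since $\Gami(\fA)\subset\Gamma^W(\fA)$ is a subring containing all the data, the same computation proves it in $\Gami(\fA)$.

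The main obstacle, such as it is, is purely bookkeeping: confirming that $\Phi$ does not leave $\Gami(\fA)$, i.e. that applying the reindexing to a matrix with bounded-sequence data produces matrix data whose entries still form a bounded sequence indexed by $\N\times\N$. This is immediate from the observation that $\Phi$ only permutes and relabels the entries of $A$ (each entry $A_{ij}$ reappears as $\Phi(A)_{2^{k+1}i+2^k-1,\,2^{k+1}j+2^k-1}$ for every $k$, and all other entries are zero), so $\{\Phi(A)_{ij}\}$ has the same set of nonzero values as $\{A_{ij}\}$; boundedness and the $N(\cdot)<\infty$ condition are preserved. Once this is in place, every axiom in \eqref{sumring}, \eqref{defoplus}, \eqref{defphi} is inherited verbatim from $\Gamma^W(\fA)$, and the proposition follows.

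\begin{proof}
By \eqref{dagproj} and \eqref{eqpropus}, for $i=0,1$ we have
$U_{f_i^\dagger}U_{f_i}=U_{f_i^\dagger f_i}=P_{\dom f_i}=P_\N=1$,
since each $f_i\colon\N\to\N$ is injective and everywhere defined; and
$U_{f_i}U_{f_i^\dagger}=P_{\ran f_i}$. Since the images of $f_0$ and $f_1$ are the odd and even positive integers respectively, $P_{\ran f_0}+P_{\ran f_1}=1$. Thus $x_i=U_{f_i^\dagger}$, $y_i=U_{f_i}$ satisfy the relations \eqref{sumring} inside $\Gami(\fA)$.

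It remains to check that $\Phi(\Gami(\fA))\subset\Gami(\fA)$; the identities that $\Phi$ is a unital ring homomorphism satisfying $r\oplus\Phi(r)=\Phi(r)$ then hold in $\Gami(\fA)$ because they hold in $\Gamma^W(\fA)\supset\Gami(\fA)$ by \cite{wag}*{p.~355}, all the relevant elements lying in $\Gami(\fA)$. By \eqref{eq:elfinfi}, for a matrix $A$ with entries in $\fA$,
\[
\Phi(A)=\sum_{k,i,j}A_{ij}E_{2^{k+1}i+2^k-1,\,2^{k+1}j+2^k-1}.
\]
Since $(k,i)\mapsto 2^{k+1}i+2^k-1$ is injective, the nonzero entries of $\Phi(A)$ are exactly relabellings of those of $A$; in particular $\{\Phi(A)_{ij}\}$ has the same set of values as $\{A_{ij}\}$, and $r_i(\Phi(A))\le r(A)$, $c_j(\Phi(A))\le c(A)$ for all $i,j$. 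Hence if $A=\diag(\alpha)U_f\in\Gami(\fA)$, then $N(\Phi(A))\le 1$ and, by Proposition \ref{Nisone}, $\Phi(A)=\diag(\beta)U_g$ for some $g\in\emb$ and some $\beta\in\fA^\N$ whose set of values is contained in $\{\alpha_n:n\in\N\}$, hence $\beta\in\elli(\fA)$; thus $\Phi(A)\in\Gami(\fA)$. For general $A\in\Gami(\fA)$, Lemma \ref{bigsum} and Remark \ref{inid} write $A$ as a finite sum of such $\diag(\alpha)U_f$, and $\Phi$ is linear, so $\Phi(A)\in\Gami(\fA)$. This completes the proof.
\end{proof}
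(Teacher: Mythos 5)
Your proof is correct and takes essentially the same route as the paper, which likewise rests on Wagoner's verification for $\Gamma^W(\fA)$ together with the inclusions $\Gamma(\fA)\subset\Gami(\fA)\subset\Gamma^W(\fA)$, and simply asserts the one point you check in detail, namely that $\Phi$ maps $\Gami(\fA)$ into itself. The only (harmless) slip is that $\ran f_0$ consists of the even integers and $\ran f_1$ of the odd ones, not the other way around; the identity $P_{\ran f_0}+P_{\ran f_1}=1$ is of course unaffected.
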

\begin{coro}\label{coro:hgami0}
Let $F:\ass\to\ab$ be a functor. Assume that the restriction of $F$
to unital $\C$-algebras is split-exact and $M_2$-stable. Then
$F(\Gami(\fA))=0$ for any unital bornological algebra $\fA$. If
furthermore $F$ is split exact on all $\C$-algebras, then
$F(\Gami(\fA))=0$ for any, not necessarily unital bornological
algebra $\fA$.
\end{coro}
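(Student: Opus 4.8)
The plan is to deduce Corollary \ref{coro:hgami0} from Proposition \ref{prop:gaminfisuri} via the classical Eilenberg swindle for infinite sum rings, following Wagoner and \cite{biva}. First I would recall the key consequence of the infinite sum ring structure: if $(R,x_0,x_1,y_0,y_1,\Phi)$ is an infinite sum ring, then the endomorphism $\Phi$ satisfies $\id_R \oplus \Phi \cong \Phi$ as ring homomorphisms $R\to R$, where $\oplus$ is the homomorphism of \eqref{defoplus}. For any functor $F$ that is split exact and $M_2$-stable on unital $\C$-algebras, one has the standard fact that $F(\phi\oplus\psi)=F(\phi)+F(\psi)$ for ring homomorphisms $\phi,\psi\colon A\to R$ with $R$ a sum ring (this is the additivity-on-a-sum-ring lemma; $M_2$-stability gives that $A\oplus A\to A$, $\diag$, induces a map splitting the two coordinate inclusions appropriately, and split exactness turns the ambient direct-sum decomposition into a sum on $F$). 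Applying this with $\phi=\id_{\Gami(\fA)}$ and $\psi=\Phi$ yields $F(\id)+F(\Phi)=F(\Phi)$ in $\End(F(\Gami(\fA)))$, hence $F(\id_{\Gami(\fA)})=0$, i.e. the identity of $F(\Gami(\fA))$ is the zero map, forcing $F(\Gami(\fA))=0$.

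For the unital case this is all that is needed, since Proposition \ref{prop:gaminfisuri} supplies the infinite sum ring structure on $\Gami(\fA)$ with the relevant $x_i=U_{f_i^\dagger}$, $y_i=U_{f_i}$ and $\Phi$ as in \eqref{eq:elfinfi}. So I would state: by Proposition \ref{prop:gaminfisuri}, $\Gami(\fA)$ is an infinite sum ring; by the swindle argument above, $F(\Gami(\fA))=0$.

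For the non-unital case, let $\tilde\fA=\fA\oplus\C$ be the unitalization. There is a short exact sequence of bornological algebras $0\to\Gami(\fA)\to\Gami(\tilde\fA)\to\Gami(\C)\to 0$, coming from applying $\Gami(-)$ (equivalently, using the matrix description of Definition \ref{defi:gaminuni}, $\Gami(\fA)=I_{\elli(\fA)}$, which is functorial in $\fA$ and carries the obvious exact sequence) to $0\to\fA\to\tilde\fA\to\C\to 0$. This sequence is split by the unital inclusion $\C\to\tilde\fA$, which induces a split $\Gami(\C)\to\Gami(\tilde\fA)$; hence it is a split exact sequence of $\C$-algebras. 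If $F$ is split exact on all $\C$-algebras, it sends this to a split short exact sequence $0\to F(\Gami(\fA))\to F(\Gami(\tilde\fA))\to F(\Gami(\C))\to 0$. Both $\tilde\fA$ and $\C$ are unital, so by the unital case $F(\Gami(\tilde\fA))=0=F(\Gami(\C))$, whence $F(\Gami(\fA))=0$.

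The main obstacle, or really the only point requiring care, is the additivity lemma $F(\phi\oplus\psi)=F(\phi)+F(\psi)$ on a sum ring: one must check that $M_2$-stability together with split exactness genuinely delivers it, by factoring $\phi\oplus\psi$ through $M_2(R)$ or through the pullback defining split exactness, exactly as in \cite{wag} and \cite{biva}*{\S4}; I would simply cite that argument rather than reproduce it. A secondary bookkeeping point is the functoriality and exactness of $\fA\mapsto\Gami(\fA)=I_{\elli(\fA)}$ needed to produce the split exact sequence of $\C$-algebras in the non-unital step, which is immediate from the matrix model in Proposition \ref{gamma} and Definition \ref{defi:gaminuni}.
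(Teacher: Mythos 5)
Your proposal is correct and follows essentially the same route as the paper: the paper's proof is just the citation of Proposition \ref{prop:gaminfisuri} together with \cite{friendly}*{Proposition 2.3.1}, the latter being exactly the Eilenberg-swindle fact you spell out, that a split-exact, $M_2$-stable functor satisfies $F(\phi\oplus\psi)=F(\phi)+F(\psi)$ on a sum ring and hence vanishes on any infinite sum ring. Your unitalization step, using the split exact sequence $0\to\Gami(\fA)\to\Gami(\tilde\fA)\to\Gami(\C)\to 0$ and split-exactness of $F$ on all $\C$-algebras, is the intended (implicit) reduction for the non-unital case.
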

\bpf Immediate from Proposition \ref{prop:gaminfisuri} and
\cite{friendly}*{Proposition 2.3.1}.\epf

\begin{exas}\label{exas:infinitesums}
Both Weibel's
homotopy algebraic $K$-theory \cite{kh} and periodic cyclic homology
\cite{cq} are $M_2$-stable and excisive on all $\Q$-algebras. Hence if $\fA$
is a bornological algebra, then
\[
KH_*(\Gami(\fA))=HP_*(\Gami(\fA))=0.
\]
Algebraic $K$-theory groups $K_n$ are split exact and $M_2$- stable
for $n\le 0$; the same is true of Karoubi-Villamayor $K$-groups
$KV_m$ for $m\ge 1$ (\cite{kv}*{Th\'eor\`eme 4.5}). Hence,
\[
K_n(\Gami(\fA))=KV_m(\Gami(\fA))=0\qquad (n\le 0, m\ge 1),
\]
again for all $\fA$. For positive $n$, the groups $K_n$ are still
split exact and $M_2$-stable on unital rings. The same is true of
both the Hochschild and cyclic homology groups $HH_n$ and $HC_n$ for $n\ge 0$; moreover these groups vanish for
$n\le -1$. Hence we have
\[
K_{n+1}(\Gami(\fA))=HH_n(\Gami(\fA))=HC_n(\Gami(\fA))=0 \qquad (n\ge
0)
\]
for any unital bornological algebra $\fA$.
\end{exas}

\section{The algebra \texorpdfstring{$\Gami(\fA)$}{Gami(fA)} as a crossed
product}\label{sec:crossp}
\numberwithin{equation}{section}
Let $2^\NN$ denote  the submonoid of idempotent elements of $\emb$
\[
2^\NN=\{p:p\in\emb\ \ p^2=p\} \subset \emb.
\]
 Note that if $p\in 2^\N$, then for $A=\ran(p)=\dom(p)$, we have
$U_p=\diag(\chi_A)$, the diagonal matrix on the sequence
\[
(\chi_{A})_n=\left\{\begin{matrix} 1 & n\in A\\ 0 & n\notin
A.\end{matrix}\right.
\]
We will often identify $p$, $U_p=\diag(\chi_A)$, and $\chi_A$.
Notice that
\begin{equation}\label{eq:f*p}
f_*(p)f=fp.
\end{equation}
The subgroup of $\Gamma$ generated by the image of $2^\N$ under $f\mapsto U_f$ is the subring
\[
\cP=\mspan\{U_p:p\in 2^\NN\}\subset\Gamma.
\]
We also consider the monoid rings $\Z[2^\NN]$ and $\Z[\emb]$, and
the two-sided ideals
\begin{gather}
I=\langle\{\chi_{A\sqcup B}-\chi_A-\chi_B:A,B\subset\NN,\ \ A\cap B=\emptyset\}\rangle\triqui \Z[2^\NN],\label{eq:I}\\
J=\langle\{\chi_{A\sqcup B}-\chi_A-\chi_B:A,B\subset\NN,\ \ A\cap
B=\emptyset\}\rangle\triqui \Z[\emb].\label{eq:J}
\end{gather}

Observe that $I$ and $J$ contain the element
\[
\chi_{A\cup B}-\chi_A-\chi_B-\chi_{A\cap B}
\]
for any pair of not necessarily disjoint subsets $A,B\subset \N$.

\begin{lem}\label{lem:presfh}
\item{i)} $\cP=\Z[2^\NN]/I$.
\item{ii)}$\fH=\Z[\emb]/J$
\item{iii)}If $\fA$ is a unital bornological algebra, then $\ell^\infty(\fA)\otimes_\cP\fH\cong\Gami(\fA)$ as $\cP$-bimodules.
\end{lem}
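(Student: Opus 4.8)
The plan is to prove the three assertions in turn, with (iii) being the substantive one. For (i), the ring $\cP=\mspan\{U_p:p\in 2^\NN\}$ is by construction a quotient of the monoid ring $\Z[2^\NN]$ via the $\Z$-linear extension of $p\mapsto U_p$, which is a ring homomorphism because $U_{pq}=U_pU_q$ for idempotents $p,q$ (a special case of \eqref{eqpropus}). The relations $\chi_{A\sqcup B}-\chi_A-\chi_B$ lie in the kernel, since $\diag(\chi_{A\sqcup B})=\diag(\chi_A)+\diag(\chi_B)$ when $A\cap B=\emptyset$; so $I$ is contained in the kernel. For the reverse inclusion, I would observe that modulo $I$ every element of $\Z[2^\NN]$ can be brought to a $\Z$-linear combination $\sum n_i\chi_{A_i}$ with the $A_i$ pairwise disjoint (using the displayed identity $\chi_{A\cup B}-\chi_A-\chi_B-\chi_{A\cap B}\in I$ to split overlaps), and such a combination maps to $0$ in $\cP$ only if all $n_i=0$, because the $\diag(\chi_{A_i})$ are linearly independent operators when the $A_i$ are disjoint and nonempty. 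This identifies $\cP$ with $\Z[2^\NN]/I$.

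For (ii), the same strategy applies: $f\mapsto U_f$ extends $\Z$-linearly to a surjective ring homomorphism $\Z[\emb]\to\fH=\Gamma$ by \eqref{eqpropus}, and $J$ is visibly in the kernel. For the reverse inclusion I would use Lemma \ref{bigsum} together with Proposition \ref{Nisone} to write any element of $\Gamma$ as a finite sum $\sum\diag(\alpha^i)U_{f_i}$ with $\alpha^i$ finitely-valued; then, absorbing the (integer) diagonal coefficients using the relations in $J$ — concretely, $\diag(\alpha)U_f$ with $\alpha$ supported on $\ran(f)$ and taking a fixed integer value $n$ on a set $A$ is congruent mod $J$ to $n\cdot U_{fP_{f^\dagger(A)}}$, and one decomposes $\dom f$ into the finitely many level sets of $\alpha\circ f$ — one reduces every element of $\Z[\emb]$ modulo $J$ to a $\Z$-linear combination of elements $U_g$ with the domains pairwise disjoint, which maps to $0$ only if it is already $0$. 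So $\fH\cong\Z[\emb]/J$.

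For (iii) the map is the expected one: $\elli(\fA)\otimes_\cP\fH\to\Gami(\fA)$, sending $\alpha\otimes U_f\mapsto\diag(\alpha)U_f$. I must check (a) this is well-defined as a map out of the balanced tensor product over $\cP$, i.e. that $\diag(\alpha)\diag(\chi_A)U_f=\diag(\alpha\chi_A)U_f$, which is immediate; (b) it is a homomorphism of $\cP$-bimodules, where the left $\cP$-action is via $\diag$-multiplication and the covariance relation \eqref{eqcov} $U_f\diag(\alpha)=\diag(f_*(\alpha))U_f$ is used to identify the bimodule structures; (c) surjectivity, which is \eqref{def:is}/\eqref{defgamma}; and (d) injectivity. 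For injectivity I would normalize a general element of the tensor product: every element of $\fH=\Gamma$ is, by Lemma \ref{bigsum} and Proposition \ref{Nisone}, a sum of terms $U_g$, and using the left $\cP$-action one can move range-projections across the tensor sign, so a general tensor is $\sum_i \alpha^i\otimes U_{f_i}$; then, by splitting the $f_i$ into pieces, arrange the $\ran(f_i)$ to be pairwise disjoint, at which point $\sum_i\diag(\alpha^i)U_{f_i}=0$ forces each $\diag(\alpha^i)U_{f_i}=0$, hence each $\alpha^i$ is supported off $\dom(f_i)=\ran(f_i^\dagger)$ wait — supported appropriately so that the tensor vanishes.

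The main obstacle I anticipate is bookkeeping in (iii)(d): making precise the normal form for elements of $\elli(\fA)\otimes_\cP\fH$ with pairwise-disjoint ranges and checking that the reductions used are legitimate moves over $\cP$ (as opposed to over $\fH$, which would be too strong). The identity \eqref{eq:f*p}, $f_*(p)f=fp$, is exactly what makes the range-projections behave well under the $\cP$-action and should be the technical key; everything else is a routine unwinding of \eqref{eqcov} and \eqref{eqpropus}. One can also shortcut parts of (i) and (ii) by noting that $\cP$ and $\fH$ are free $\Z$-modules on the disjoint-support normal forms, and that $\Z[2^\NN]/I$, $\Z[\emb]/J$ have the same normal forms by the splitting relations, so the surjections are isomorphisms by a rank/basis count.
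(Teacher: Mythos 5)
Your part (i) is fine and is essentially the paper's argument: there disjointification works because distinct subsets of $\N$ can be atomized into pairwise disjoint pieces using the relations in $I$. The genuine gap is in your injectivity arguments for (ii) and (iii). The normal form you rely on --- every element of $\Z[\emb]$ congruent mod $J$ to a $\Z$-linear combination of elements of $\emb$ with pairwise disjoint domains (and, in (iii), a sum of elementary tensors with pairwise disjoint ranges) --- is not attainable. Take two distinct bijections $f,g:\N\to\N$. Since $J\subseteq\ker(\Z[\emb]\to\fH)$, anything congruent to $f-g$ mod $J$ has image $U_f-U_g$, which has some column with two nonzero entries; but any combination $\sum_i n_ih_i$ with pairwise disjoint domains maps to a matrix with at most one nonzero entry per column. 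So $f-g$ admits no such normal form, and indeed none of the moves you list (splitting a domain into level sets, absorbing integer coefficients, moving projections across $\otimes_\cP$) can ever separate two distinct functions having the same domain. The same obstruction, with ranges in place of domains, defeats the disjoint-range normal form in (iii): if $f\neq g$ are bijections, pieces of $f$ and pieces of $g$ must overlap in range, which is exactly where your write-up trails off (``wait --- supported appropriately'').

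This missing case --- distinct $f_i$ sharing a common domain --- is precisely what the paper's proof is built to handle. After reducing (as you do) to a combination $\sum_i\lambda_if_i$ with all domains equal, one evaluates the vanishing image at each basis vector $e_j$, obtaining $\sum_i\lambda_ie_{f_i(j)}=0$; for each $j$ this partitions the index set $\{1,\dots,n\}$ according to coincidence of the values $f_i(j)$, with the coefficients summing to zero on each block. Only finitely many partitions occur, so $\N$ splits into finitely many sets on which the partition is constant; multiplying on the right by the corresponding characteristic functions (a legitimate move mod $J$) and then separating blocks reduces to the case $f_1=\dots=f_n$, where the element is visibly zero in $\Z[\emb]$. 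The same scheme with coefficients $\alpha^{(i)}_j\in\fA$ in place of integers gives the injectivity in (iii). Your proposal needs this (or an equivalent) mechanism; without it, parts (ii) and (iii) are not proved. Your well-definedness, bimodule and surjectivity checks in (iii), and the surjectivity in (ii) via Lemma \ref{bigsum} and Proposition \ref{Nisone}, are fine.
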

\bpf
It is clear that there are natural surjective ring homomorphisms
\begin{gather*}
\pi_1:\Z[2^\NN]/I\to \cP\text{ and}\\
\pi_2:\Z[\emb]/J\to \fH,
\end{gather*}
and a natural surjective $\cP$-bimodule homomorphism
\[\pi_3:\elli\otimes_\cP\fH\to\Gami.\]
Let $\xi=\sum_{j=1}^n \lambda_j\chi_{A_j}\in\Z[2^\NN]$ represent an element $\in\ker\pi_1$; for each subset $F\subset\{1,\dots,n\}$, let
$A_F=\bigcap_{j\in F}A_j\cap\bigcap_{j\notin F}A_j^c$. From $\pi_1(\xi)_{|A_F}=0$ we get
\[
A_F\ne\emptyset\Rightarrow\sum_{j\in F}\lambda_j=0.
\]
Next note that $\bigcup_{i=1}^nA_i=\sqcup_{F} A_F$; hence, modulo
$I$, we have
\begin{align*}
\xi\equiv &\sum_F\sum_{j=1}^n\lambda_j\chi_{A_j\cap A_F}\\
      = & \sum_F(\sum_{j\in F}\lambda_j)\chi_{A_F}=0.
\end{align*}
This proves i). In order to prove ii) we have to show that $\ker(\pi_2)=0$. Let $\xi=\sum_{j=1}^n\lambda_jf_j\in\Z[\emb]$ be a representative of an element in $\ker(\pi_2)$. Let $A_i=\dom f_i$, and let $A_F$ be as above; then $\xi\equiv \sum_F\xi\chi_{A_F}$. Hence we may assume that the $A_i$ are disjoint. Furthermore, upon replacing $\xi$ by $\xi\chi_{A_i}$, and elminating zero elements of $\emb$, we may assume that $A_1=\dots=A_n$. For each $j\in \NN$, we have
\begin{equation}\label{sum=0}
\sum_{i=1}^n\lambda_ie_{f_i(j)}=0.
\end{equation}
Let $K=\{f_i(j):i=1,\dots,n\}$; for each $k\in K$, let
$D_k=\{i:f_i(j)=k\}$. Then $D(j):=\{D_k\}_{k\in K}$ is a partition
of $\{1,\dots,n\}$, and $\sum_{i\in D_k}\lambda_i=0$. There is a
finite set $\cD$ of partitions arising in this way, since the number
of all partitions of $\{1,\dots,n\}$ is finite. For each $D\in\cD$,
let $J_D=\{j\in\NN: D(j)=D\}$. Then $\sqcup_{D\in\cD}J_D=\NN$, and
$\xi\equiv\sum_D \xi\cdot\chi_D$. Hence, upon replacing $\xi$ by
$\xi\chi_D$ if necessary, we may assume that $\cD$ has only one
element $D=\{D_1,\dots,D_r\}$. But $\xi\equiv\sum_i\chi_{D_i}\xi$,
so we further reduce to the case when $r=1$. This means that
$f_1=\dots=f_n$ and, by \eqref{sum=0}, $\sum_i\lambda_if_i$ is the
zero element of $\Z[\emb]$. We have proved ii). To prove iii) we
must show that $\pi_3$ is injective. Let
$\xi=\sum_{i=1}^n\alpha^{(i)}\otimes U_{f_i}\in \ker \pi_3$. Because
\[
\alpha\otimes U_f= \alpha \chi_{\supp(\alpha)\cap\ran{f}}\otimes
\chi_{\supp(\alpha)\cap\ran{f}}U_f\in \elli(\fA)\otimes_\cP\fH,
\]
we may assume that $\supp(\alpha_i)=\ran(f_i)$ ($i=1,\dots,n$).
Proceeding as above, we may assume that $\dom f_1=\dots=\dom f_n$. For each $j\in \NN$, we have
\begin{equation}\label{sum=01}
\sum_{i=1}^n\alpha^{(i)}_je_{f_i(j)}=0.
\end{equation}
Proceeding as above again, we may reduce to the case $f_1=\dots=f_n$. By \eqref{sum=01}, we have
$\sum_{i=1}^n\alpha^{(i)}=0$. Thus
\[
\xi=\sum_{i=1}^n\alpha^{(i)}\otimes U_{f_i}=(\sum_{i=1}^n\alpha^{(i)})\otimes U_{f_1}=0.
\]

\epf

\begin{rem}\label{rem:tight}
Given any monoid $M$, a representation of $M$ is
the same thing as module over the monoid ring $\Z[M]$. In view of
Lemma \ref{lem:presfh}, the modules over $\cP$ and
$\Gamma$ correspond to those representations of the inverse monoids
$2^\N$ and $\emb$ which are tight in the sense of Exel
(see \cite{ruy}*{Def. 13.1 and Prop. 11.9}).
\end{rem}

\begin{rem}\label{rem:gamar}
It was proved in \cite{biva}*{Lemma 4.7.1} that the map
\[
\psi:\Gamma\otimes R\to \Gamma(R),\ \ \psi(A\otimes x)_{i,j}=A_{ij}x 
\]
is an isomorphism. It follows from this that $\Gamma$ is flat as an abelian
group. Therefore the map $J\otimes R\to \Z[\emb]\otimes R$ is injective. 
Thus, by Lemma \ref{lem:presfh}, 
\[
\Gamma(R)=\Z[\emb]\otimes R/J\otimes R=R[\emb]/JR.
\]
Next observe that the inclusion $\cP\subset \Gamma$ is a split injection. Indeed the map 
$$\Gamma\to\cP,\ \ U_f\mapsto P_{\dom f}$$ 
is a left inverse.
It follows that if $R$ is any ring then the map $\psi:\cP\otimes R\to \cP(R):=\psi(\cP\otimes R)$ is an isomorphism. Thus using Lemma \ref{lem:presfh} and a similar argument as that given above for the case of $\Gamma$, one can show that
\[
\cP(R)=R[2^\N]/IR.
\] 
\end{rem}

Because $\emb$ is a monoid, if $\cA$ is a ring on which $\emb$
acts by ring endomorphisms we can form the \emph{crossed product}
$\cA\#\emb$. As an abelian group, $\cA\#\emb=\cA\otimes\Z[\emb]$
with multiplication given by
\begin{equation}\label{eq:cpform}
(a\# f)(b\# g)=af_*(b)\# fg.
\end{equation}
Here $\#=\otimes$ and $f_*(b)$ denotes the action of $f$ on $\emb$.
Now assume that the $\emb$-ring $\cA$ is also a $\cP$-algebra,
that is, it is a ring and a $\cP$-bimodule, and these operations
are compatible in the sense that
\[
(ap)b=a(pb)\ \ (a,b\in \cA,\ \ p\in\cP).
\]
Further assume that $\cA$ is central as a $\cP$-bimodule,
i.e. $pa=ap$ ($a\in \cA$, $p\in\cP$), and that
\[
pa=p_*(a)\qquad (p\in 2^\N).
\]
Under all these conditions, we say that $\cA$ is an
\emph{$\emb$-bundle} (cf. \cite{busex}*{Def. 2.10}). For
$J\triqui\Z[\emb]$ as in \eqref{eq:J}, we have
\begin{gather*}
\cA\#\emb\troqui \cA\#J=\mspan\{r\# j:r\in \cA, j\in J\}\mbox{ and }\\
\cA\#\emb\troqui L=\mspan\{rp\#h-r\#ph:r\in \cA, p\in\cP, h\in
\emb\}.
\end{gather*}
Set
\begin{equation}\label{eq:crossdef}
\cA\#_\cP\Gamma=\cA\#\emb/(L+\cA\#J).
\end{equation}
Thus, $\cA\#_\cP\Gamma=\cA\otimes_\cP \Gamma$ as left
$\cP$-modules, and the product is that induced by \eqref{eq:cpform};
we have
\begin{equation}\label{eqprodcp}
(a\# U_f)(b\# U_g)=a f_*(b)\# U_{fg}\in \cA\#_\cP\Gamma.
\end{equation}

\begin{prop}\label{prop:cpg}
Let $\fA$ be a bornological algebra. The map
\begin{equation}\label{map:cp}
\elli(\fA)\#_\cP\fH\to \Gami(\fA),\quad \alpha\# U_f\mapsto \diag(\alpha) U_f
\end{equation}
is an isomorphism of $\cP$-algebras. If $S\triqui\elli$ is a
symmetric ideal, then \eqref{map:cp} sends $S(\fA)\#_\cP\fH$
isomorphically onto $I_{S(\fA)}\triqui\Gami(\fA)$.
\end{prop}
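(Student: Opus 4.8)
The plan is to verify that the assignment $\alpha\#U_f\mapsto\diag(\alpha)U_f$ is a well-defined bornological-algebra homomorphism, and then to exhibit an explicit inverse using the matrix description of $\Gami(\fA)$ from Proposition \ref{gamma}. First I would recall that by definition $\elli(\fA)\#_\cP\fH=\elli(\fA)\#\emb/(L+\elli(\fA)\#J)$, so to define the map out of the quotient it suffices to check that the obvious map $\elli(\fA)\#\emb\to\Gami(\fA)$, $\alpha\#f\mapsto\diag(\alpha)U_f$, kills both $L$ and $\elli(\fA)\#J$. That it is multiplicative for the crossed-product formula \eqref{eq:cpform} is exactly the first covariance identity in \eqref{eqcov}: $\diag(\alpha)U_f\diag(\beta)U_g=\diag(\alpha)\diag(f_*(\beta))U_fU_g=\diag(\alpha f_*(\beta))U_{fg}$, using \eqref{eqpropus}. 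For the relation $L$, note that for $p\in 2^\N$ corresponding to $A\subset\N$ we have $U_p=\diag(\chi_A)$, so $\diag(\alpha)U_p U_h=\diag(\alpha\chi_A)U_h$ coincides with the image of $\alpha p\#h$, while the image of $\alpha\#ph$ is $\diag(\alpha)U_{ph}=\diag(\alpha)\diag(\chi_A)U_h$ by \eqref{eqpropus} again; these agree. For $\elli(\fA)\#J$, the generators $\chi_{A\sqcup B}-\chi_A-\chi_B$ map to $\diag(\chi_{A\sqcup B})-\diag(\chi_A)-\diag(\chi_B)=0$ since $\chi_{A\sqcup B}=\chi_A+\chi_B$ for disjoint $A,B$, and an arbitrary element of $\elli(\fA)\#J$ is an $\elli(\fA)$-combination of such, so it also maps to $0$ after multiplying by $\diag$ of the coefficient. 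Hence \eqref{map:cp} is a well-defined homomorphism of $\cP$-algebras; boundedness is inherited from boundedness of $\diag$ and of the $U_f$.

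Next I would address surjectivity and injectivity. Surjectivity is immediate from \eqref{def:is} (with $S=\elli$, i.e.\ \eqref{defgamma}): $\Gami(\fA)$ is by definition spanned by the elements $\diag(\alpha)U_f$. For injectivity I would use the combinatorial normal-form argument already carried out in Lemma \ref{lem:presfh}(iii): there it is shown that $\pi_3:\elli\otimes_\cP\fH\to\Gami$ is injective, and literally the same reduction — first absorbing $\chi_{\supp\alpha\cap\ran f}$ so that $\supp\alpha_i=\ran f_i$, then reducing to a common domain, then using that $(k,i)\mapsto$ the relevant partition is finite to reduce to a single $f$, where \eqref{sum=01} forces $\sum_i\alpha^{(i)}=0$ — works verbatim over an arbitrary unital $\fA$ in place of $\C$, because it never uses anything about the coefficient ring beyond that it is an abelian group (indeed the statement of Lemma \ref{lem:presfh}(iii) is already for a general unital $\fA$). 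So the cleanest route is simply to observe that \eqref{map:cp} is the map whose injectivity is Lemma \ref{lem:presfh}(iii). Alternatively, one can build the inverse directly: given $T\in\Gami(\fA)$, write $T$ in the normal form of Remark \ref{inid}, $T=\sum_i\diag(\alpha^i)U_{f_i}$ with $\supp\alpha^i=\ran f_i$, and send it to $\sum_i\alpha^i\#U_{f_i}$; the content of Lemma \ref{lem:presfh}(iii) is exactly that this is independent of the chosen representation, hence well-defined, and it is visibly a two-sided inverse.

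For the statement about symmetric ideals, I would argue as follows. By Proposition \ref{gamma}, $I_{S(\fA)}$ consists of those matrices $A$ with $N(A)<\infty$ and $\{A_{ij}\}\in S(\N\times\N)$; by \eqref{def:is} it is spanned by the $\diag(\alpha)U_f$ with $\alpha\in S(\fA)$. On the other side, $S(\fA)\#_\cP\fH$ is by definition the image of $S(\fA)\otimes_\cP\fH$ in $\elli(\fA)\otimes_\cP\fH$, i.e.\ the $\cP$-submodule spanned by the $\alpha\#U_f$ with $\alpha\in S(\fA)$. The isomorphism \eqref{map:cp} carries the spanning set $\{\alpha\#U_f:\alpha\in S(\fA)\}$ bijectively onto the spanning set $\{\diag(\alpha)U_f:\alpha\in S(\fA)\}$, hence carries the span onto the span; since $S(\fA)$ is symmetric this span is $I_{S(\fA)}$ by \eqref{def:is}. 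The only point that needs a word is that $S(\fA)\#_\cP\fH$ really is a subspace of $\elli(\fA)\#_\cP\fH$ and not merely a quotient of $S(\fA)\otimes_\cP\fH$ — but this follows because the normal form of Remark \ref{inid} can be taken with $\alpha^i\in S(\fA)$ precisely when $T\in I_{S(\fA)}$ (Proposition \ref{gamma}), so the injectivity already established restricts.

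I expect the only real subtlety to be the injectivity, and the honest thing is that it has already been proved: the main obstacle is recognizing that Lemma \ref{lem:presfh}(iii) is stated and proved for a general unital $\fA$ and that \eqref{map:cp} is, after unwinding definitions of $\#_\cP$ via \eqref{eq:crossdef} and \eqref{eqprodcp}, literally the map $\pi_3$ of that lemma (now bornological and $\cP$-algebraic rather than just $\cP$-bimodule), so no new combinatorics is needed. Everything else — well-definedness, multiplicativity, boundedness, surjectivity, and the $S$-ideal statement — is a matter of matching spanning sets and invoking \eqref{eqcov}, \eqref{eqpropus}, and Proposition \ref{gamma}.
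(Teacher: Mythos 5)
Your treatment of the unital case is correct and is essentially the paper's own argument: the paper also just observes that \eqref{map:cp} is the map of Lemma \ref{lem:presfh}(iii), hence bijective, and that multiplicativity follows from \eqref{eqcov} and \eqref{eqprodcp}; your extra checks (killing $L$ and $\elli(\fA)\#J$, and the identification of the image of $S(\fA)\#_\cP\fH$ with $I_{S(\fA)}$ via \eqref{def:is} and Proposition \ref{gamma}) are fine and only make explicit what the paper leaves to the reader.

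The genuine gap is that the proposition is stated for an \emph{arbitrary} bornological algebra $\fA$, and your proof covers only unital $\fA$. Every tool you invoke --- Lemma \ref{lem:presfh}(iii), Remark \ref{inid}, Proposition \ref{gamma}, the spanning descriptions \eqref{defgamma} and \eqref{def:is}, Lemma \ref{bigsum} --- is stated and proved under a unitality hypothesis, and you say so yourself (``works verbatim over an arbitrary \emph{unital} $\fA$''). This is not a cosmetic restriction: when $\fA$ has no unit, $\Gami(\fA)$ and $I_{S(\fA)}$ are \emph{defined} by the matrix formula \eqref{eq:mis} (Definition \ref{defi:gaminuni}), not as spans of operators $\diag(\alpha)U_f$ on $\ell^2(\fA)$ (indeed $\diag$ need not be faithful if $\mathrm{ann}(\fA)\neq 0$), so your surjectivity ``immediate from \eqref{defgamma}'' and your appeal to the normal form of Remark \ref{inid} are not available as stated. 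The paper closes this case by unitalization: the sequence $0\to S(\fA)\to S(\tilde{\fA})\to S\to 0$ is split as a sequence of $\cP$-modules (split by the inclusion $\C\subset\tilde{\fA}$), hence stays exact after applying $-\#_\cP\fH$, and combining with the unital case one gets $S(\fA)\#_\cP\fH\iso\ker\bigl(I_{S(\tilde{\fA})}\to I_S\bigr)=I_{S(\fA)}$, the last equality being clear from the entrywise description \eqref{eq:mis}. To complete your proof you should either add this unitalization step, or reprove the unital-coefficient lemmas you rely on for non-unital $\fA$ with the matrix algebras of Definition \ref{defi:gaminuni} as targets --- which is exactly the extra work the paper's splitting argument is designed to avoid. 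Note also that the first assertion (the case $S=\elli$, since $\Gami(\fA)=I_{\elli(\fA)}$) needs this non-unital treatment just as much as the second.
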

\bpf Assume first that $\fA$ is unital. Then the map \eqref{map:cp}
is the same as that of Lemma \ref{lem:presfh}(iii). Hence, it
is bijective. By \eqref{eqcov} and \eqref{eqprodcp},
it is an algebra homomorphism. This proves the first assertion in
the unital case; the second is immediate from the fact that
\eqref{map:cp} is bijective and maps $S(\fA)\#_\cP\fH$ onto
$I_{S(\fA)}$. For not necessarily unital $\fA$, write $\tilde{\fA}$
for its unitalization as a bornological algebra. We have an exact
sequence
\begin{equation}\label{seq:unitalis}
0\to S(\fA)\to S(\tilde{\fA})\to S\to 0.
\end{equation}
Observe that the inclusion $\C\subset\tilde{\fA}$ induces a
$\cP$-module homomorphism $S\to S(\tilde{\fA})$ which splits the
sequence \eqref{seq:unitalis}. Hence we get an exact sequence
\[
0\to S(\fA)\#_{\cP}\fH\to S(\tilde{\fA})\#_{\cP}\fH\to
S\#_{\cP}\fH\to 0.
\]
Combining this sequence with the unital case of the proposition, we
obtain an isomorphism
\[
S(\fA)\#_{\cP}\fH\iso \ker(I_{S(\fA)}\to I_S)=I_{S(\fA)}.
\]
\epf

\section{Homotopy invariance}\label{sec:hit}

\numberwithin{equation}{subsection}

\subsection{Crossed products by the Cohn ring}
The following two elements of $\emb$ will play a central role in
what follows
\begin{gather*}\label{map:cohn}
s_i:\N\to\N\quad (i=1,2)\\
s_i(m)=2m+i-1.
\end{gather*}
We have the following relations
\begin{equation}\label{eq:cohn}
s_i^\dagger s_j=\delta_{i,j}\qquad i=1,2.
\end{equation}
 Following standard conventions, if $\nu$ is a word of length $l$
on $\{1,2\}$, we write $s_\nu=s_{\nu_1}\cdots s_{\nu_l}$ and
$s^\dagger_\nu=(s_\nu)^\dagger$. Thus for the empty word we have
$s_\emptyset=s^\dagger_\emptyset=1$. Observe that if $\mu$ is of length $l$ then
\begin{equation}\label{esemu}
s_\mu(n)=2^ln+\sum_{i=1}^l(\mu_i-1)2^{i-1}.
\end{equation}
Put
\[
W_2^l=\{\text{ words of length $l$ on $\{1,2\}$}\},\quad
W_2=\bigcup_{l=0}^\infty W_2^l.
\]
We write
\[
\cM_2=\{s_\mu (s_\nu)^\dagger:\mu,\nu\in W_2\}.
\]
Thus $\cM_2\subset\emb$ is the inverse submonoid generated by the
$s_i$. Its idempotent submonoid is
\[
E(\cM_2)=\{s_\nu (s_\nu)^\dagger:\nu\in W_2\}.
\]
One checks, using \eqref{esemu} that $s_\mu s^\dagger_\nu=s_{\mu'}s^\dagger_{\nu'}$ if and only if $\mu=\mu'$ and $\nu=\nu'$. It follows that $\cM_2$ is the universal inverse monoid on
generators $s_1,s_2$ subject to the relations \eqref{eq:cohn}. Write
\[
C_2=\Z[\cM_2]\supset \cP_2=\Z[E(\cM_2)].
\]
The algebra $C_2$ is the \emph{Cohn ring} on two generators
(\cite{cohn}). The assignment
\[
E_{s_\mu(1),s_\nu(1)}\mapsto
s_\mu(1-\sum_{i=1}^2s_is^\dagger_i)s^*_\nu.
\]
defines an isomorphism between $M_\infty$ and the ideal of $C_2$
generated by $1-\sum_{i=1}^2s_is^\dagger_i$. We shall identify each
element of $M_\infty$ with its image in $C_2$. If $\fA$ is a
bornological algebra and $S\triqui\elli$ is a symmetric ideal, then
we can consider the action of $\cM_2$ on $S(\fA)$ coming from
restriction of the $\emb$ action, and form the crossed product
$S(\fA)\#\cM_2$. Recall from Section \S\ref{sec:crossp} that
$S(\fA)\#\cM_2=S(\fA)\otimes_\Z\Z[\cM_2]$ equipped with the product
\eqref{eq:cpform}. Put
\[
S(\fA)\#_{\cP_2}C_2=S(\fA)\#\cM_2/\langle\alpha p\#f-\alpha\#pf:p\in
E(\cM_2), f\in \cM_2\rangle.
\]
As a vector space, $S(\fA)\#_{\cP_2}C_2=S(\fA)\otimes_{\cP_2}C_2$;
the product is defined as in \eqref{eq:cpform}. We have an algebra
homomorphism

\begin{equation}\label{map:rhocohn}
\rho:S(\fA)\#_{\cP_2}C_2\to I_{S(\fA)},\ \
\rho(\alpha\#f)=\diag(\alpha)U_f.
\end{equation}

\begin{lem}\label{lem:rhocohn}
The map \eqref{map:rhocohn} is injective.
\end{lem}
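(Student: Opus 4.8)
The plan is to prove injectivity of $\rho$ by a reduction argument entirely parallel to the proof of Lemma \ref{lem:presfh}(iii), now carried out inside the inverse submonoid $\cM_2$ rather than in all of $\emb$. Suppose $\xi=\sum_{i=1}^n\alpha^{(i)}\#f_i\in S(\fA)\#_{\cP_2}C_2$ with $\rho(\xi)=\sum_i\diag(\alpha^{(i)})U_{f_i}=0$. Since each $f_i\in\cM_2$ can be written uniquely as $s_{\mu_i}s^\dagger_{\nu_i}$, and since in $S(\fA)\otimes_{\cP_2}C_2$ we may absorb the idempotent $s_{\mu_i}s^\dagger_{\mu_i}$ on the left of $\alpha^{(i)}$, I would first reduce to the case $\supp(\alpha^{(i)})\subseteq\ran(f_i)$. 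The key structural fact I will use is the one recorded just before the lemma: $s_\mu s^\dagger_\nu = s_{\mu'}s^\dagger_{\nu'}$ in $\emb$ if and only if $\mu=\mu'$ and $\nu=\nu'$, so the $U_{f_i}$ are ``linearly independent modulo common support'' in a precise sense.

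The core of the argument is the same partition-of-$\N$ bookkeeping used in Lemma \ref{lem:presfh}. First, using the idempotents $\chi_{A}$ for $A=\ran(f_i)=\supp(\alpha^{(i)})$ and intersecting, I would reduce to the case where all the $f_i$ have a common domain $A_1=\dots=A_n$ (the sets $A_F=\bigcap_{i\in F}\dom f_i\cap\bigcap_{i\notin F}(\dom f_i)^c$ partition $\N$, and $\xi\equiv\sum_F\xi\chi_{A_F}$; note $\chi_{A_F}\in\cP_2$ provided $A_F$ is a union of sets of the form $s_\nu(\N)$ — here one must be slightly careful, but since each $\dom f_i = s_{\nu_i}(\N)$ is a congruence class mod a power of $2$ by \eqref{esemu}, the relevant intersections and complements are finite unions of such classes, hence their characteristic functions do lie in $\cP_2$, possibly after a further refinement via the relation $\chi_{s_\nu(\N)}=\sum_i\chi_{s_\nu s_i(\N)}$). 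Evaluating $\rho(\xi)=0$ at a basis vector $e_j$ for $j$ in the common domain gives $\sum_{i=1}^n\alpha^{(i)}_j e_{f_i(j)}=0$ in $\ell^2(\fA)$. Grouping the indices $i$ according to the value $f_i(j)$ yields a partition $D(j)$ of $\{1,\dots,n\}$ with $\sum_{i\in D_k}\alpha^{(i)}_j=0$ for each block; since there are only finitely many partitions of $\{1,\dots,n\}$, the sets $J_D=\{j:D(j)=D\}$ partition $\N$, and after replacing $\xi$ by $\xi\chi_{J_D}$ and then by $\chi_{D_l}\xi$ one reduces to the case $f_1=\dots=f_n$. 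In that final case $\sum_i\alpha^{(i)}=0$, whence $\xi=(\sum_i\alpha^{(i)})\#U_{f_1}=0$.

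The one point requiring genuine care — and what I expect to be the main obstacle — is the verification that at each reduction step the idempotents being used actually lie in $\cP_2=\Z[E(\cM_2)]$, since $E(\cM_2)$ consists only of the $\chi_{s_\nu(\N)}$ and their $\Z$-linear combinations, not of $\chi_A$ for arbitrary $A\subseteq\N$. This is where the explicit formula \eqref{esemu} does the work: the domains and ranges of elements of $\cM_2$ are exactly the arithmetic progressions $\{2^l m + c : m\in\N\}$, the lattice they generate under finite intersection, union and complement (within a fixed level) is finite, and each such set decomposes as a disjoint union of progressions $s_\nu(\N)$, so its characteristic function is a $\Z$-combination of elements of $E(\cM_2)$. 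Once this is established, the argument of Lemma \ref{lem:presfh}(iii) transfers verbatim, and injectivity follows. For the non-unital case one proceeds exactly as in Proposition \ref{prop:cpg}, passing to the unitalization $\tilde\fA$ and using the split exact sequence $0\to S(\fA)\to S(\tilde\fA)\to S\to 0$ together with the fact that $\#_{\cP_2}C_2$ preserves this exactness.
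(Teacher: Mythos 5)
Your strategy---transporting the proof of Lemma \ref{lem:presfh}(iii) into the inverse monoid $\cM_2$, with the crux being that every idempotent used in the reductions lies in $\cP_2$---is viable and genuinely different from the paper's argument, but the two assertions you lean on at exactly the flagged point are wrong as stated. First, the relation $\chi_{s_\nu(\N)}=\sum_i\chi_{s_\nu s_i(\N)}$ does not hold: as subsets of $\N$ one has $s_\nu(\N)=\{s_\nu(1)\}\sqcup s_{\nu 1}(\N)\sqcup s_{\nu 2}(\N)$, and in $C_2$ the difference $s_\nu s_\nu^\dagger-s_{\nu 1}s_{\nu 1}^\dagger-s_{\nu 2}s_{\nu 2}^\dagger=s_\nu(1-s_1s_1^\dagger-s_2s_2^\dagger)s_\nu^\dagger$ is a nonzero idempotent, namely the matrix unit $E_{s_\nu(1),s_\nu(1)}$ of the copy of $M_\infty$ inside $C_2$; the failure of your relation is precisely what distinguishes the Cohn ring from its Leavitt quotient, and it also fails after applying the action on $\elli$. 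Second, and for the same reason, it is not true that every set in the Boolean algebra generated by the $\dom f_i$ and $\ran f_i$ is a disjoint union of progressions $s_\kappa(\N)$: finite leftovers such as $\{1\}=\N\setminus(s_1(\N)\cup s_2(\N))$ inevitably occur. What rescues the plan is that $\chi_{\{s_\mu(1)\}}$, realized as $s_\mu(1-s_1s_1^\dagger-s_2s_2^\dagger)s_\mu^\dagger$, lies in $\cP_2$, so each such Boolean combination is a finite disjoint union of progressions \emph{and singletons}, its characteristic function lies in $\cP_2$, and the needed partition identities hold in $C_2$ by telescoping. With that correction you must still carry the finite (matrix-unit) pieces through the reduction (injectivity of $\rho$ on the span of the $\alpha\# E_{ij}$ is immediate), observe via \eqref{esemu} that two distinct monomials $s_\mu s_\kappa^\dagger$ and $s_{\mu'}s_\kappa^\dagger$ with the same domain can still collide at finitely many points, so your sets $J_D$ are only cofinite or finite rather than giving $f_1=\dots=f_n$ outright, and interpret left multiplication by $\chi_A$ for infinite $A$ as a multiplier action, since $\chi_A\notin S(\fA)$ when, say, $S=c_0$ (unlike in Lemma \ref{lem:presfh}(iii), whose coefficients are $\elli(\fA)$ with $\fA$ unital). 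The closing unitalization step is unnecessary: neither the statement nor the argument needs $\fA$ unital.

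For comparison, the paper avoids all of this bookkeeping by a normal-form computation inside $C_2$: using $s_\mu s_\nu^\dagger=s_\mu(1-\sum_i s_is_i^\dagger)s_\nu^\dagger+\sum_i s_{\mu i}s_{\nu i}^\dagger$, a given element of $S(\fA)\#_{\cP_2}C_2$ is rewritten as a finite-matrix part $\sum_{i,j}x_{i,j}\#E_{i,j}$ with $i<2^l$ plus terms $\beta_{\mu,\nu}\#s_\mu s_\nu^\dagger$ in which every $\mu$ has a common length $l$; applying $\rho$, formula \eqref{esemu} shows that the ranges of the operators $U_{s_\mu}U_{s_\nu}^*$ lie in pairwise disjoint congruence classes mod $2^l$ consisting of integers $\ge 2^l$, so each summand must vanish separately, forcing $x_{i,j}=0$ and $\supp\beta_{\mu,\nu}\cap\ran(s_\mu)=\emptyset$, whence $\beta_{\mu,\nu}\#s_\mu s_\nu^\dagger=0$. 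So once the points above are repaired your route does prove the lemma, but it is substantially longer than the paper's length-normalization argument, which sidesteps the domain-intersection and collision-pattern analysis entirely.
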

\bpf
Any nonzero element $x\in C_2$ can be written as a finite sum of nonzero terms
\begin{equation}\label{elequis}
x=\sum_{\mu,\nu}\alpha_{\mu,\nu}\#s_\mu s^\dagger_\nu.
\end{equation}
Let $l$ be the maximum length of all the multi-indices $\mu$ appearing in the
expression above. Remark that we may rewrite \eqref{elequis} as another finite sum
\begin{equation}\label{xrescrito}
x=\sum_{i,j}x_{i,j}\# E_{i,j}+\sum_{l(\mu)=l}\beta_{\mu,\nu}\#
s_{\mu}s^\dagger_{\nu}.
\end{equation}
such that
\begin{equation}\label{eq:xij}
x_{i,j}\ne 0\Rightarrow i<2^l.
\end{equation}
Indeed this follows from \eqref{esemu} and from the
identities
\begin{align*}
s_\mu s^\dagger_\nu=&s_\mu(1-\sum_{i=1}^2s_is_i^\dagger)s^\dagger_\nu+\sum_{i=1}^2s_{\mu i}s_{\nu i}^{\dagger}\\
                   =&E_{\mu(1),\nu(1)}+\sum_{i=1}^2s_{\mu i}s^{\dagger}_{\nu i}.
\end{align*}
Suppose that the element \eqref{xrescrito} is in $\ker\rho$. Observe
that $\rho(\chi_{\{i\}}\otimes E_{i,j})=E_{i,j}$. Hence, we have
\begin{equation}\label{vacero}
0=\sum_{i,j}x_{i,j}E_{i,j}+\sum_{l(\mu)=l,\nu}\diag(\beta_{\mu,\nu})U_{s_\mu}U_{s_\nu}^*.
\end{equation}
But by \eqref{esemu}, for $\mu$ as in \eqref{vacero}, we have
\[
\ran (U_{s_\mu}U_{s_\nu}^*)=\mspan\{e_n: n=2^l
m+\sum_{i=1}^l(\mu_i-1)2^{i-1}\ \ m\in\N\}.
\]
This together with \eqref{eq:xij} imply that each of the summands of \eqref{vacero} vanishes. Thus
\[
x_{i,j}=0 \text{ and } \diag(\beta_{\mu,\nu})U_{s_\mu}U_{s_\nu}^*=0
\]
for all $i,j$ and all $\mu$ and $\nu$ in \eqref{eq:xij}. Hence,
\[
\emptyset=\supp
\beta_{\mu,\nu}\cap(2^l\N+\sum_{i=1}^l(\mu_i-1)2^{i-1}) =\supp(s_\mu
s^\dagger_\mu)_*(\beta_{\mu,\nu}).
\]
It follows that $\beta_{\mu,\nu}\#s_\mu s^\dagger_\nu=0$ and therefore the element \eqref{xrescrito} must be zero, finishing
the proof.
\epf
\begin{rem}\label{rem:complecohn}
Let $S\triqui\elli$ be a nonzero symmetric ideal and let $c_f$ be as in Example \ref{exa:minf}. Then $S$ contains $c_f$ and thus if we identify $S\#_{\cP_2}C_2$ with its
image in $I_S$, we have
\[
I_S\supset S\#_{\cP_2}C_2\supset c_f\#_{\cP_2}C_2=M_\infty.
\]
In particular the completion of $c_0\#_{\cP_2}C_2$ with respect to the operator norm in $\cB(\ell^2)$
coincides with the completion of $M_\infty\C$ and of $I_{c_0}$; it is the ideal $\cK=J_{c_0}$ of compact
operators. Similarly, for $p\ge 1$ the completion of $\ell^p\#_{\cP_2}C_2$ for the $p$-Schatten norm
$||T||_p=Tr(|T|^p)$ coincides with that of $I_{\ell^p}$; it is the Schatten ideal $\Ell^p$.
\end{rem}

\subsection{The Cohn ring and homotopy invariance}
Let $\V$ be a bornolo- gical vector space, $T$ a compact Hausdorff
topological space, $X$ a metric space, and $1\ge\lambda>0$. Put
\begin{gather*}
C(T,\V)=\{f:T\to\V\text{ continuous}\},\\
H^\lambda(X,\V)=\{f:X\to\V \ \ \lambda-\text{H\"older continuous}\}.
\end{gather*}
We refer the reader to \cite{cmr}*{\S 2.1.1 and \S 3.1.4} for the definitions of
continuity and H\"older continuity in the bornological setting, as well as for
those of the canonical uniform bornologies that the above algebras carry.

Let $S\triqui\elli$ be a symmetric ideal and $\fA$ a bornological
algebra. We have a natural inclusion
\[
\inc:\fA\subset S(\fA), a\mapsto (a,0,0,\dots).
\]

\begin{lem}\label{lem:c2} (cf. \cite{cmr}*{Lemma 3.26})
Let $F:\ass\to\ab$ be a split-exact, $M_2$-stable functor, $\fB$ a
bornological algebra, $\ev_t:C([0,1],\fB)\to \fB$ the evaluation
map, and $0<\lambda\le 1$.

\item[i)]
\[
 F\left(C([0,1],\fB)\overset{\ev_t}\to \fB\overset{\inc}\to c_0(\fB)\overset{-\#1}\to
 c_0(\fB)\#_{\cP_2}C_2\right)
\]
is independent of $t$.
\item[ii)]
If $p>1/\lambda$, then
\[
F\left(H^\lambda([0,1],\fB)\overset{\ev_t}\to \fB\overset{\inc}\to
\ellp(\fB)\overset{-\#1}\to \ellp(\fB)\#_{\cP_2}C_2\right)
\]
is independent of $t$.
\end{lem}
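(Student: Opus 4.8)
The plan is to imitate the standard argument that an $M_2$-stable, split-exact functor is homotopy invariant (cf.\ \cite{cmr}*{Lemma 3.26}), but to carry it out \emph{inside} the Cohn-ring crossed product, where the extra room provided by the partial isometries $s_1,s_2$ lets us build the needed ``upper-triangular'' homotopy. First I would treat (i). Write $\fD=c_0(\fB)\#_{\cP_2}C_2$ and let $\iota:c_0(\fB)\to \fD$ be $\alpha\mapsto\alpha\#1$. The key observation is that inside $\fD$ the elements $u_i=1\#s_i$ satisfy $u_i^\dagger u_j=\delta_{ij}$, so that $u_1u_1^\dagger+u_2u_2^\dagger\le 1$ and conjugation by the pair $(u_1,u_2)$ gives a ``block diagonal'' endomorphism. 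Using this I would, following the classical pattern, produce for the composite $g_t=\iota\circ\inc\circ\ev_t: C([0,1],\fB)\to\fD$ a second homomorphism and an idempotent-conjugation argument showing that $F(g_0)$ and $F(g_1)$ differ by something that is killed by $M_2$-stability. Concretely, the route is: (a) reduce, by split-exactness applied to $0\to c_0(\fB)\to c_0(\tilde\fB)\to c_0\to 0$ and its Cohn-crossed-product version (as in Proposition \ref{prop:cpg}), to the case where $\fB$ is unital; (b) over a unital $\fB$, use the two isometries to define an endomorphism $\Psi$ of $\fD$ with $\Psi\cong\mathrm{id}\oplus\Psi$, i.e.\ an ``infinite-sum''-type absorption at the level of the Cohn ring, which forces $F(\Psi)$ and hence the relevant difference to vanish.

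More efficiently, I expect the cleanest packaging is this. Set $\fC=C([0,1],\fB)$ and consider the algebra $c_0([0,1],\fB)\#_{\cP_2}C_2$ together with the two maps $\ev_0,\ev_1$ to $\fD$. It suffices to show $F(\ev_0)=F(\ev_1)$ on the image of $F(\fC)$ under $F$ applied to $\fC\overset{\inc}\to c_0(\fC)=c_0([0,1],\fB)\overset{-\#1}\to c_0([0,1],\fB)\#_{\cP_2}C_2$. For this I would build, using the partial isometries $s_\nu$ indexed by $W_2$, an explicit homotopy-style homomorphism $h:\fC\to \mathrm{something}\#_{\cP_2}C_2$ whose ``coordinates'' interpolate between constant sequences at $\ev_0$ and at $\ev_1$; the point of working in $c_0$ rather than $\elli$ is exactly that such an interpolating family can be arranged to lie in $c_0$ of the relevant space, so that it is actually a morphism in $\ass$. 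Then split-exactness plus $M_2$-stability, applied to the pullback square comparing $h$ with the two evaluations, yields the claim. I would phrase the $M_2$-stability input as: the two orthogonal isometries $u_1,u_2\in\fD$ (resp.\ in the ambient algebra) provide, by the usual argument, that any two homomorphisms $\phi,\psi$ into $\fD$ with orthogonal ranges satisfy $F(\phi\oplus\psi)=F(\phi)+F(\psi)$, and that a homomorphism absorbing itself under $\oplus$ induces $0$.

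For part (ii) the plan is identical, with $c_0$ replaced by $\ellp$ and ``continuous'' by ``$\lambda$-H\"older continuous.'' The only genuinely new point is the quantitative one: an interpolating family between two constant $\ellp$-sequences must again land in $\ellp$ of the relevant H\"older-function space, and this is where the hypothesis $p>1/\lambda$ enters. I would isolate this in a small estimate: if $a\in\fB$ and one interpolates along the ``branches'' $s_\nu$ indexed by words of length $l$, assigning to the $n$-th branch a value that moves a $\lambda$-H\"older distance $O(2^{-\epsilon l})$ per step, then summing the $p$-th powers of the resulting sequence over all $2^l$ branches and all $l$ converges precisely when $p\lambda>1$; H\"older continuity in the $[0,1]$-variable is preserved because along each branch the family varies $\lambda$-H\"older-continuously. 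Granting this, the same split-exactness/$M_2$-stability formalism as in (i) finishes the proof.

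The main obstacle I anticipate is (b)/(ii) combined: constructing the interpolating homomorphism so that it is simultaneously (1) an algebra map compatible with the $\cP_2$-action and the relations \eqref{eq:cohn} defining $C_2$, (2) valued in $c_0$ (resp.\ $\ellp$) of the function algebra, and (3) equal to the constant families at the two endpoints, and then checking that the resulting diagram is a split-exact pullback to which $M_2$-stability applies. Everything else --- the reduction to unital $\fB$ via Proposition \ref{prop:cpg}, the absorption argument from the two isometries, and the bookkeeping with $s_\mu s^\dagger_\nu$ via \eqref{esemu} and Lemma \ref{lem:rhocohn} --- should be routine once that homotopy is in place.
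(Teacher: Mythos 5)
Your overall orientation is the right one, and in fact it is the skeleton of the paper's argument: dyadic sampling indexed by words on $\{1,2\}$, smallness of successive differences giving $c_0$ (uniform continuity) respectively $\ellp$ with $p\lambda>1$ (H\"older continuity), and the two Cohn isometries together with $M_2$-stability to dispose of the correction terms. Concretely, the paper imports from the proof of \cite{cmr}*{Lemma 3.26} the quasi-homomorphisms $(\phi_-,\phi_+)$ and $(\phi^2_0,\phi^2_-)$ from $C([0,1],\fB)$ (resp. $H^\lambda([0,1],\fB)$) to $S(X,\fB)$, $X=\{(l,k):k\le 2^l-1\}$, together with the difference formula \eqref{eq:ralf}; it then transports everything along the bijection $\psi(l,k)=2^l+k$ of \eqref{psi} and observes that the two elements $\bar V,V$ become $s_2$ and the element $f$ of \eqref{elefe} with $f^*f=1$, so that $F(\conj(1\#s_2))$ and $F(\conj(1\#f))$ are the identity because $S(\fB)\#C_2$ is an ideal in $\elli(\tilde{\fB})\#C_2$ and $F$ is $M_2$-stable; the right-hand side of \eqref{eq:ralf} then vanishes. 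Your reduction to unital $\fB$ is unnecessary (the ideal-in-a-unital-algebra trick replaces it), but that is harmless.

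The genuine gap is that neither of the two mechanisms you propose for actually closing the argument works as stated. First, the absorption endomorphism $\Psi$ with $\Psi\cong\mathrm{id}\oplus\Psi$ cannot be defined on $c_0(\fB)\#_{\cP_2}C_2$: elements of the algebraic crossed product are finite sums $\sum_i\alpha_i\#s_{\mu_i}s^\dagger_{\nu_i}$, so an infinite orthogonal sum such as $\sum_k(1\#s_2^ks_1)x(1\#s_1^\dagger (s_2^\dagger)^k)$ simply does not exist there (no completion is taken); moreover, if such a swindle were available it would force $F$ to kill the corner inclusion $\fB\to c_0(\fB)\#_{\cP_2}C_2$, which is incompatible with Lemma \ref{lem:rhocohn}, Remark \ref{rem:stabletop} and Theorem \ref{thm:kh0} iii). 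Second, your ``cleanest packaging'' asks for a single algebra homomorphism $h$ into $c_0$ (or $\ellp$) of the relevant algebra whose coordinates interpolate between the two evaluations; no such multiplicative map exists in general --- a dyadic sampling family $(f(t_n))_n$ is multiplicative but never lies in $c_0(\fB)$ unless it vanishes, and damping it destroys multiplicativity. Only \emph{differences} of nearby samplings are small, which is exactly why the proof must run through quasi-homomorphisms and split-exactness, and why the identity \eqref{eq:ralf} expressing $F(\inc\circ\ev_0)-F(\inc\circ\ev_1)$ in terms of $(F(\bar V_*)-1)$ and $(F(V_*)-1)$ applied to those quasi-homomorphisms is the crux. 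You correctly isolate the quantitative point that $p\lambda>1$ makes the level-$l$ contributions $2^l\cdot O(2^{-\lambda pl})$ summable, but without producing the difference formula (or an equivalent substitute) the orthogonality and stability remarks do not by themselves yield that $F(\ev_0)$ and $F(\ev_1)$ agree after stabilization; as written the proposal stops short of a proof.
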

\begin{proof}
Let $S$ be either $c_0$ or $\ell^p$. In the first case, put
$\fB[0,1]=C([0,1],\fB)$; in the second, let $\lambda>1/p$ and set
$\fB[0,1]=H^\lambda([0,1],\fB)$. Let
\[
\Z_{\ge 0}\times\Z_{\ge 0}\supset X=\{(l,k):k\le 2^l-1\}.
\]
Let $\phi_+,\phi_-,\phi_0^2$ and $\phi_-^2$ be the homomorphisms
$\fB[0,1]\to \elli(X,\fB)$ defined in the proof of \cite{cmr}*{Lemma
3.26}. One checks that $(\phi_+,\phi_-)$ and $(\phi_0^2,\phi_-^2)$
are quasi-homomorphisms $\fB[0,1]\to S(X,\fB)$. Furthermore, it is
shown in loc. cit. that there are elements $V,\bar{V}\in\emb(X)$
such that for
\[
\inc_{0,0}:\fB\to S(X,\fB),\ \
\inc_{0,0}(a)_{l,k}=a\delta_{l,0}\delta_{k,0}
\]
we have
\begin{multline}\label{eq:ralf}
F(\inc_{0,0}\circ\ev_0)-F(\inc_{0,0}\circ\ev_1)=(F(\bar{V}_*)-1)
F(\phi_-,\phi_+)\\
+(F(V_*)-1) F(\phi^2_0,\phi^2_-).
\end{multline}
Consider the bijection $\psi:X\to \N$
\begin{equation}\label{psi}
\psi(l,k)=2^l+k.
\end{equation}
Let $s_1,s_2$ be the generators \eqref{map:cohn} of
$C_2$. Let $v,\bar{v}\in\emb$ be the conjugates of $V$
and $\bar{V}$ under $\psi$. One checks that, for $\rho$ as in \eqref{map:rhocohn},
we have
\begin{gather}
\bar{v}=s_2\mbox{ and}\label{barvs2}\\
U_{v}=\rho(1-s_1s_1^\dagger-s_2s_2^\dagger+s_2s_1^\dagger+s_1s_2^\dagger).\label{velotro}
\end{gather}
Now recall that $C_2=\Z[\cM_2]$ and write $*:C_2\to C_2$ for the involution induced by $\dagger$.
It follows from \eqref{velotro} that the element
\begin{equation}\label{elefe}
C_2\owns f=1-s_1s_1^\dagger-s_2s_2^\dagger+s_2s_1^\dagger+s_1s_2^\dagger
\end{equation}
satisfies $f^*f=1$. Hence if $g$ is any of $1\#
s_2,1\#f\in\elli(\tilde{\fB})\#C_2$, we have an algebra homorphism
\[
\conj(g):S(\fB)\# C_2\to S(\fB)\# C_2,\ \ x\mapsto gxg^*.
\]
Moreover, because $F$ is $M_2$-stable by assumption and
$S(\fB)\#C_2$ is an ideal in $\elli(\tilde{\fB})\#C_2$,
$F(\conj(g))$ is the identity (\cite{friendly}*{Proposition 2.2.6}).
Let $\phi'^2_0$, $\phi'^2_-$, $\phi'_+$ and $\phi'_-$ be the maps
$\fB[0,1]\to S(\fB)$  obtained from $\phi^2_0$, $\phi^2_-$,
$\phi_+$, and $\phi_-$ after conjugating with $U_\psi$. Then
\eqref{eq:ralf} gives the identity
\begin{multline*}
F((\inc\ev_0)\#1)-F((\inc\ev_1)\#1)=\\
(F(\conj(1\#s_2))-1) F(\phi'_-,\phi'_+)+ (F(\conj(1\#f))-1)
F(\phi^2_0,\phi^2_-)=0.
\end{multline*}
We have proved that $F((\inc\circ\ev_0)\#1)=F((\inc\circ\ev_1)\#1)$.
The proposition now follows from the fact that if $t\in [0,1]$ then
$\ev_t$ and $\ev_0$ are linearly homotopic.
\end{proof}

\begin{rem}
The key property of $C_2$ used in the proof of Lemma \ref{lem:c2} is
that it contains the elements \eqref{barvs2} and \eqref{elefe}. In
fact it is not hard to check that they generate $C_2$ as a ring.
Hence taking crossed product with $C_2$ may be regarded as
the smallest construction which makes the proof
given above work.
\end{rem}

\begin{rem}\label{rem:stabletop}
If $\fA$ is a $C^*$-algebra, then $c_0(\fA)=c_0\sotimes\fA$ is the
spatial $C^*$-algebra tensor product. The inclusion $c_0\subset
I_{c_0}\subset\cK$ is equivariant for the action of $\emb$, and so
we get a map $c_0(\fA)\#_{\cP_2}C_2\to \fA\sotimes\cK$. Composing
the latter with the inclusion $\fA\to c_0(\fA)\#_{\cP_2}C_2$ of
Lemma \ref{lem:c2} we obtain the map $\iota:\fA\to\fA\sotimes\cK$,
$a\mapsto a\sotimes E_{1,1}$. Hence, the lemma implies that if
$F:\ass\to\ab$ is split-exact and $M_2$-stable, then, for every
$C^*$-algebra $\fB$, the map
\[
 F\left(C([0,1],\fB)\overset{\ev_t}\to \fB\overset{\iota}\to \fB\sotimes\cK\right)
\]
is independent of $t$. One can use this to prove that $F$ is homotopy invariant on stable $C^*$-algebras, thus obtaining a weak version of Higson's homotopy invariance theorem \cite{hig}*{Theorem 3.2.2}. Indeed it suffices to show that
$F(\iota)$ is injective if $\fB=\fA\sotimes\cK$, and this follows from the fact that there is a map $\cK\sotimes\cK\to M_2\cK$ (in fact an isomorphism) such that the following diagram commutes
\begin{equation}\label{diag:stablek}
\xymatrix{\cK\sotimes\cK\ar[r]&M_2\cK\\
\cK.\ar[u]^\iota\ar[ur]_{E_{1,1}}}
\end{equation}
Next suppose that $\fB$ is any bornological algebra. Write
$\hotimes$ for the projective tensor product. For each $p\ge 1$ we
have a map $\ell^p\hotimes\fB\to \ell^p(\fB)$. This map is an
isomorphism if $p=1$; using this isomorphism as above, we obtain a
map
\[
\ell^1(\fA)\#_{\cP_2}C_2\to \fA\hotimes\Ell^1.
\]
In general $\ell^p\hotimes\fA\to\ell^p(\fA)$ is not an isomorphism.
Note, however, that for every $p\ge 1$, the quotient
$\ell^p(\fA)/\ell^1(\fA)$ is a nilpotent ring. Assume that the
functor $F$ is \emph{strongly nilinvariant} in the sense that if
$f:A\to B$ is a homomorphism with nilpotent kernel, and such that
$f(A)\triqui B$ and $B/f(A)$ is nilpotent, then $F(f)$ is an
isomorphism. Then $F(\ell^1(\fA)\#_{\cP_2}C_2)\to
F(\ell^p(\fA)\#_{\cP_2}C_2)$ and  $F(\fA\hotimes\Ell^1)\to
F(\fA\hotimes\Ell^p)$ are isomorphisms for all $p\ge 1$. Moreover we
also have a commutative diagram
\begin{equation}\label{diag:stablell}
\xymatrix{\Ell^1\hotimes\Ell^1\ar[r]&M_2\Ell^1\\
\Ell^1\ar[u]^\iota\ar[ur]_{E_{1,1}}}
\end{equation}
Let $\bass$ be the category of bornological algebras and bounded
homomorphisms. Using Lemma \ref{lem:c2} together with diagram
\eqref{diag:stablell} above and proceeding as before, one shows that
if $F$ is split-exact, $M_2$-stable, and strongly nilinvariant, then
the functor
\[\bass\to\ab,\ \ \fA\mapsto F(\fA\hotimes\Ell^1),\]
is invariant under H\"older-continuous homotopies. This gives a
(weak) borno- logical version of \cite{cot}*{Theorem 6.1.6}. Observe
that the stability properties \eqref{diag:stablek} and
\eqref{diag:stablell} play a crucial role in the arguments above. We
do not have an analogue stability result for the uncompleted
algebras $c_0(\fA)\#_{\cP_2}C_2$ and $\ell^1(\fA)\#_{\cP_2}C_2$. In
the next subsection we shall prove a version of stability for
crossed products with $\Gamma$. This will enable us to prove a
homotopy invariance theorem in the following subsection.
\end{rem}

\subsection{Stability}

\begin{lem}\label{lem:gamastable}
\item[i)] There is a natural isomorphism $\Gamma(\N\sqcup\N)\cong M_2\Gamma$.
\item[ii)]Let $\fA$ be a bornological algebra and $S\triqui\elli$ a symmetric ideal. Then  $I_{S(\N\sqcup\N,\fA)}\cong M_2I_{S(\fA)}$.
\end{lem}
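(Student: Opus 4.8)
The plan is to obtain both isomorphisms from the canonical block decomposition of matrices indexed by $\N\sqcup\N$, and in fact to prove the slightly more precise statement that for any ring $R$ there is a natural isomorphism $\Gamma(\N\sqcup\N,R)\cong M_2\Gamma(R)$, with (i) the case $R=\Z$. Fix the decomposition $X=\N\sqcup\N$ with both summands equal to $\N$; this identifies
\[
R^{X\times X}=\bigoplus_{a,b\in\{1,2\}}R^{\N\times\N}=M_2\bigl(R^{\N\times\N}\bigr),
\]
sending a matrix $A$ to its block matrix $(A_{ab})$, where $A_{ab}$ collects the entries of $A$ whose row index lies in the $a$-th copy of $\N$ and whose column index lies in the $b$-th copy. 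First I would check that this restricts to the desired isomorphism. The point is that for $i$ in the $a$-th copy one has $r_i(A)=r_i(A_{a1})+r_i(A_{a2})$, since the $i$-th row of $A$ is the concatenation of the $i$-th rows of $A_{a1}$ and $A_{a2}$, and symmetrically $c_j(A)=c_j(A_{1b})+c_j(A_{2b})$; since a finite sum of nonnegative integers is bounded exactly when each summand is, this gives $N(A)<\infty$ iff $N(A_{ab})<\infty$ for all $a,b$. Likewise $\{A_{ij}:i,j\in X\}=\bigcup_{a,b}\{(A_{ab})_{ij}:i,j\in\N\}$ is finite iff each of the four sets is finite. Finally matrix multiplication respects the block structure, so the bijection is an algebra isomorphism, proving (i).

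For (ii) I would use the matrix description of $I_{S(\fA)}$ from Proposition \ref{gamma} and Definition \ref{defi:gaminuni}, which extends verbatim to an arbitrary countable index set $Y$ (transporting along a bijection $\N\to Y$ is harmless because $S$ is symmetric):
\[
I_{S(Y,\fA)}=\bigl\{A\in\fA^{Y\times Y}:\{A_{ij}\}\in S(Y\times Y,\fA)\ \text{and}\ N(A)<\infty\bigr\}.
\]
Applying the block decomposition above with $R=\fA$ and $Y=X=\N\sqcup\N$, the condition $N(A)<\infty$ splits over the four blocks exactly as in (i). For the remaining condition, observe that $X\times X$ is the disjoint union of the four copies of $\N\times\N$ that index the blocks, and invoke the following property of a symmetric ideal $S$: for any decomposition $Z=Z_1\sqcup Z_2$ of a countable set, a $Z$-indexed family $\beta$ lies in $S(Z,\fA)$ iff each restriction $\beta|_{Z_i}$ lies in $S(Z_i,\fA)$. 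Applying this twice shows that $\{A_{ij}\}\in S(X\times X,\fA)$ iff $\{(A_{ab})_{ij}\}\in S(\N\times\N,\fA)$ for all $a,b$, i.e. iff each block $A_{ab}$ lies in $I_{S(\fA)}$. Hence the block decomposition restricts to $I_{S(\N\sqcup\N,\fA)}\cong M_2 I_{S(\fA)}$; since it is bi-bounded it is an isomorphism of bornological algebras.

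The only point that needs genuine care — and which I expect to be the main (though modest) obstacle — is the auxiliary property of $S$ used in the last paragraph. It follows from the fact that a symmetric subspace $S\subset\elli$, being $\emb$-invariant, is stable both under restriction along an injection of index sets and under extension by zero along an injection: conjugating by bijections with $\N$, both operations become instances of the $\emb$-action $f_*$ on $S(\V)$ (restriction is $(g^\dagger)_*$ and zero-extension is $g_*$ for a suitable injection $g:\N\to\N$), hence preserve $S$; one also uses that $S(Z,\fA)$ is a linear subspace and that $S$, being an ideal of the ring $\elli$, is an order ideal. Granting this, if $\beta|_{Z_i}\in S(Z_i,\fA)$ then $\beta$ is the sum of the zero-extensions of $\beta|_{Z_1}$ and $\beta|_{Z_2}$ and so lies in $S(Z,\fA)$, while the converse is immediate from stability under restriction. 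Everything else reduces to the bookkeeping of block matrix multiplication.
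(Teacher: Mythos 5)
Your argument is correct, and it proves the same statement the paper does, but by a genuinely different (more elementary) route. You work entirely with the matrix pictures: $\Gamma(X,R)$ as the matrices over $X\times X$ with $N(A)<\infty$ and finitely many distinct entries, and $I_{S(Y,\fA)}$ via the description of Proposition \ref{gamma} (which Definition \ref{defi:gaminuni} takes as the definition in the non-unital case), and you check that both defining conditions split across the four blocks; the only genuinely new ingredient you need is that membership in $S(Z,\fA)$ can be tested on the pieces of a finite partition of $Z$, which you correctly reduce to $\emb$-invariance of $S(\fA)$ (restriction is $(g^\dagger)_*$, zero-extension is $g_*$) together with $S(\fA)$ being a subspace and $S$ an order ideal of $\elli$. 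The paper instead exploits its structural descriptions: for (i) it uses the presentation $\Gamma(X)=\Z[\emb(X)]/J$ from Lemma \ref{lem:presfh} to define the map $\phi(a)_{ij}=U_{p_i}aU_{p_j}$ (the same corner compression underlying your block decomposition) and checks bijectivity using that $U_{p_1},U_{p_2}$ are orthogonal idempotents summing to $1$; for (ii) it uses the crossed-product identification $I_{S(\fA)}\cong S(\fA)\#_\cP\Gamma$ of Proposition \ref{prop:cpg}, together with $S(\N\sqcup\N)\cong S\oplus S$ and $\cP(\N\sqcup\N)\cong\cP\oplus\cP$, to identify $S(\N\sqcup\N,\fA)\#_{\cP(\N\sqcup\N)}\Gamma(\N\sqcup\N)$ with $M_2(S(\fA)\#_\cP\Gamma)$. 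Your version buys elementarity, works verbatim over any ring $R$ in (i), and avoids the tensor-product bookkeeping; the paper's version has the advantage that the isomorphism comes packaged in crossed-product form, which is exactly how it is consumed in Proposition \ref{prop:semigami} and Theorem \ref{thm:htpy}, so if you adopt your proof you should still record that under the identifications of Proposition \ref{prop:cpg} your block isomorphism is the map $a\mapsto (U_{p_i}aU_{p_j})_{ij}$ used there.
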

\begin{proof}
Let $p_1,p_2\in\emb(\N\sqcup\N)$ be the inclusions of each of the
copies of $\N$. If $f\in\emb(\N\sqcup\N)$, then $p_ifp_j$ identifies
in the obvious way with an element $f_{i,j}\in\emb$. One checks that
the map
\[
\emb(\N\sqcup\N)\to M_2\Gamma,\quad f\mapsto (U_{f_{ij}})
\]
is multiplicative. Hence it induces a homomorphism
\[\Z[\emb(\N\sqcup\N)]\to M_2\Gamma.\]
One checks further that this map kills the ideal \eqref{eq:J}, and
thus descends to a homomorphism
\begin{equation}\label{map:gamam2}
\phi:\Gamma(\N\sqcup\N)\to M_2\Gamma,\quad
\phi(a)_{ij}=U_{p_i}aU_{p_j}.
\end{equation}
Observe that $E_{i,j}U_f$ is in the image of \eqref{map:gamam2} for
all $f\in\emb$. It follows that \eqref{map:gamam2} is surjective.
Moreover because $U_{p_1},U_{p_2}$ are orthogonal idempotents with
$U_{p_1}+U_{p_2}=1$, $a\in\Gamma(\N\sqcup\N)$ is zero if and only if
$U_{p_i}aU_{p_j}=0$ for $1\le i,j\le 2$. Hence \eqref{map:gamam2} is
an isomorphism; this proves part i). To prove part ii) one begins by
observing that the assignment $\alpha\mapsto (\alpha p_1, \alpha
p_2)$ defines isomorphisms $S(\N\sqcup\N)\iso S(\N)\oplus S(\N)$ and
$\cP(\N\sqcup\N)\iso \cP(\N)\oplus\cP(\N)$. Next, note that if we
regard $M_2\Gamma$ as a $\cP\oplus\cP$-module via the diagonal
inclusion, we have an isomorphism of abelian groups
\begin{gather*}
(S(\fA)\oplus S(\fA))\otimes_{\cP\oplus\cP}M_2(\Gamma)\cong
M_2(S(\fA)\#_\cP\Gamma)\\
(\alpha_1,\alpha_2)\otimes x\mapsto \sum_{1\le i,j\le 2}\alpha_i\#
x_{i,j}\otimes E_{i,j}.
\end{gather*}
Finally one checks that the algebra homomorphism
\begin{gather*}
S(\N\sqcup\N,\fA)\#_{\cP(\N\sqcup\N)}\Gamma(\N\sqcup\N)\to
M_2(S(\fA)\#_\cP\Gamma)\\
\alpha\# x\mapsto\sum_{1\le i,j\le 2}\alpha p_i\#U_{p_i}xU_{p_j}\otimes
E_{i,j}
\end{gather*}
coincides with the following composite of isomorphisms of abelian
groups
\begin{align*}
S(\N\sqcup\N,\fA)\#_{\cP(\N\sqcup\N)}\Gamma(\N\sqcup\N)\cong &
(S(\fA)\oplus S(\fA))\otimes_{\cP\oplus\cP}M_2(\Gamma)\\
 \cong &M_2(S(\fA)\#_\cP\Gamma).
\end{align*}
\end{proof}

Let $\fA$ be a bornological algebra and let
$\iota:\elli(\fA)\to\elli(\N\times\N,\fA)$ be the inclusion
\[
\iota(\alpha)(m,n)=\alpha_m\delta_{1,n}.
\]
Also let
$S\triqui\elli$ be a symmetric ideal; put

\begin{gather}\label{map:jota}
\jmath:S(\fA)\#_{\cP}\Gamma\to S(\N\times\N,\fA)\#_{\cP(\N\times\N)}\Gamma(\N\times\N)\\
\jmath(\alpha\#U_f)=\iota(\alpha)\#(U_{f\times
\chi_{\{1\}}}).\nonumber
\end{gather}

\begin{prop}\label{prop:semigami}
Let $\fA$ be a bornological algebra and $S\triqui\elli$ a symmetric
ideal. Then any $M_2$-stable functor $F:\ass\to\ab$ sends the map
$\jmath$ of \eqref{map:jota} to a split monomorphism.
\end{prop}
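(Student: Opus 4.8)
The plan is to identify $\jmath$, after replacing its source and target by canonically isomorphic algebras, with the upper–left corner inclusion $\iota_{1,1}\colon I_{S(\fA)}\to M_2 I_{S(\fA)}$, $a\mapsto\begin{pmatrix}a&0\\0&0\end{pmatrix}$, and then to invoke the $M_2$-stability of $F$.

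First I would pass from the crossed products to ideals of $\Gami$: by Proposition \ref{prop:cpg}, together with its evident analogue for the index set $\N\times\N$ (which follows by conjugating with a bijection $\N\times\N\to\N$), there are algebra isomorphisms $\rho\colon S(\fA)\#_{\cP}\Gamma\iso I_{S(\fA)}$ and $\rho'\colon S(\N\times\N,\fA)\#_{\cP(\N\times\N)}\Gamma(\N\times\N)\iso I_{S(\N\times\N,\fA)}$, each carrying $\alpha\#U_f$ to $\diag(\alpha)U_f$. Consider $\widehat{\jmath}:=\rho'\circ\jmath\circ\rho^{-1}\colon I_{S(\fA)}\to I_{S(\N\times\N,\fA)}$, so that on the spanning elements $\widehat{\jmath}(\diag(\alpha)U_f)=\diag(\iota(\alpha))U_{f\times\chi_{\{1\}}}$. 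Since the domain and range of $f\times\chi_{\{1\}}$ are contained in $\N\times\{1\}$ and $\iota(\alpha)$ is supported there, $\widehat{\jmath}(\diag(\alpha)U_f)$ is supported on the $\N\times\{1\}$ block of $\ell^2(\N\times\N,\fA)$, and under the tautological bijection $(n,1)\leftrightarrow n$ it restricts there to $\diag(\alpha)U_f$. Writing $P=\diag(\chi_{\N\times\{1\}})$ for the corresponding idempotent multiplier, $\widehat{\jmath}$ is thus an isomorphism of $I_{S(\fA)}$ onto the corner $P\,I_{S(\N\times\N,\fA)}\,P$.

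Next I would choose a bijection $\sigma\colon\N\times\N\to\N\sqcup\N$ restricting to $(n,1)\mapsto n$ in the first copy of $\N$ and mapping $\N\times(\N\setminus\{1\})$ onto the second copy — possible since both sets are countably infinite. Conjugation by $U_\sigma$ is an isomorphism $\Theta\colon I_{S(\N\times\N,\fA)}\iso I_{S(\N\sqcup\N,\fA)}$ with $\Theta(P)=p_1$, the projection onto the first copy. Post-composing with the isomorphism $\phi\colon I_{S(\N\sqcup\N,\fA)}\iso M_2 I_{S(\fA)}$ of Lemma \ref{lem:gamastable}(ii), whose formula $\phi(A)_{ij}=U_{p_i}AU_{p_j}$ carries the first–copy corner isomorphically onto the $(1,1)$-entry (and $p_1$ to $E_{11}$), a direct check on the spanning elements $\diag(\alpha)U_f$ gives
\[
\phi\circ\Theta\circ\widehat{\jmath}\,(\diag(\alpha)U_f)=\begin{pmatrix}\diag(\alpha)U_f&0\\0&0\end{pmatrix},
\]
that is, $\phi\circ\Theta\circ\widehat{\jmath}=\iota_{1,1}$. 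Hence $\jmath=\rho'^{-1}\circ\Theta^{-1}\circ\phi^{-1}\circ\iota_{1,1}\circ\rho$ is a composite of algebra isomorphisms with the corner inclusion $\iota_{1,1}$. Applying $F$, all factors except possibly $F(\iota_{1,1})$ are isomorphisms, and $F(\iota_{1,1})$ is an isomorphism because $F$ is $M_2$-stable; therefore $F(\jmath)$ is an isomorphism, in particular a split monomorphism.

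The mathematical content is concentrated in the bookkeeping of the last two paragraphs: one must verify that $f\times\chi_{\{1\}}$ confines everything to the block $\N\times\{1\}$, that the idempotent $P$ is carried to $p_1$ and then to $E_{11}$ along $\Theta$ and $\phi$, and — crucially — that the resulting composite is the \emph{standard} corner map rather than one twisted by an automorphism of $I_{S(\fA)}$. None of this is deep, but it must be done carefully, since only then does the appeal to $M_2$-stability deliver the statement. (Note the argument in fact yields slightly more than claimed: $F(\jmath)$ is an isomorphism.)
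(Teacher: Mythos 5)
Your proof is correct and follows essentially the same route as the paper: choose a bijection $\N\times\N\to\N\sqcup\N$ carrying $\N\times\{1\}$ onto the first copy, use Lemma \ref{lem:gamastable} to identify the target with $M_2$ of the source, recognize $\jmath$ as the standard corner embedding $E_{1,1}\otimes-$, and invoke $M_2$-stability. Transporting everything through $\rho$, $\rho'$ of Proposition \ref{prop:cpg} to work inside $I_{S(\fA)}$ rather than at the crossed-product level is only a cosmetic difference.
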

\begin{proof}
Choose a bijection $\N\times\N\to \N\sqcup\N$ sending
$\N\times\{1\}$ bijectively onto the first copy of $\N$. This
bijection induces an isomorphism
\[
S(\N\times\N,\fA)\#_{\cP(\N\times\N)}\Gamma(\N\times\N)\overset{\cong}\longrightarrow
S(\N\sqcup\N,\fA)\#_{\cP(\N\sqcup\N)}\Gamma(\N\sqcup\N).
\]
Composing this map with the isomorphism of Lemma
\ref{lem:gamastable}, we obtain an isomorphism which fits into a
commutative diagram
\[
\xymatrix{S(\N\times\N,\fA)\#_{\cP(\N\times\N)}\Gamma(\N\times\N)
\ar[r]^(.6)\sim &M_2(S(\fA)\#_{\cP}\Gamma)\\
S(\fA)\#_{\cP}\Gamma\ar[u]^{\jmath}\ar[ur]_{E_{1,1}\otimes-}}
\]
This concludes the proof.
\end{proof}

\subsection{A homotopy invariance theorem}
Let $f_0,f_1:\fA\to\fB$ be homomorphisms of bornological algebras
and $0<\lambda\le 1$. A \emph{$\lambda$-H\"older continuous
homotopy} from $f_0$ to $f_1$ is a homomorphism $H:\fA\to
H^\lambda([0,1],\fB)$ such that $\ev_i H=f_i$ ($i=0,1$). We say that
a functor $F:\bass\to\ab$ is \emph{invariant under
$\lambda$-H\"older homotopies} if it maps $\lambda$-H\"older
homotopic homomorphisms to equal maps.

\begin{thm}\label{thm:htpy}
Let $F:\ass\to\ab$ be a split-exact, $M_2$-stable functor.
\item[i)] The functor
\[
\bass\to\ab, \fB\mapsto F(I_{c_0(\fB)})
\]
is invariant under continuous homotopies.
\item[ii)] If $1\ge\lambda>0$ and $p>1/\lambda$, then the functor
\[
\bass\to\ab, \fB\mapsto F(I_{\ell^p(\fB)})
\]
is invariant under $\lambda$-H\"older homotopies.
\end{thm}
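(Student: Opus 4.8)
The plan is to reduce Theorem \ref{thm:htpy} to Lemma \ref{lem:c2} via the stability result of Proposition \ref{prop:semigami}, using the standard two-step argument for homotopy invariance of split-exact $M_2$-stable functors. Write $S$ for $c_0$ (case i) or $\ell^p$ (case ii), and recall from Proposition \ref{prop:cpg} and Remark \ref{rem:complecohn} the identifications $I_{S(\fB)}\cong S(\fB)\#_\cP\fH \supset S(\fB)\#_{\cP_2}C_2$. The functor on $\bass$ we must study is $G:\fB\mapsto F(I_{S(\fB)})=F(S(\fB)\#_\cP\fH)$. It suffices to show that for every bornological algebra $\fB$, the two maps $\ev_0,\ev_1:\fB[0,1]\to\fB$ (where $\fB[0,1]=C([0,1],\fB)$ in case i, $=H^\lambda([0,1],\fB)$ in case ii) induce the same map $G(\ev_0)=G(\ev_1)$; the general statement about $\lambda$-H\"older homotopic $f_0,f_1:\fA\to\fB$ follows formally by applying this to the classifying homotopy $H:\fA\to\fB[0,1]$ and composing with $G(H)$.

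First I would record that $G$ is split-exact. This follows because $S(-)$ is an exact functor on bornological algebras (it preserves the relevant short exact sequences, as in the proof of Proposition \ref{prop:cpg}, and split injections go to split injections), the crossed-product construction $-\#_\cP\fH$ preserves split-exactness since as a $\cP$-module it is just $-\otimes_\cP\fH$ with $\fH$ flat (Remark \ref{rem:gamar}), and $F$ is split-exact by hypothesis. In particular $G$ carries split injections to split injections. Now the key diagram: by Proposition \ref{prop:semigami} applied with the symmetric ideal $S$, the inclusion-type map
\[
\jmath:S(\fB)\#_\cP\Gamma\longrightarrow S(\N\times\N,\fB)\#_{\cP(\N\times\N)}\Gamma(\N\times\N)
\]
is sent by any $M_2$-stable functor to a split monomorphism; the analogous statement holds with $\fH=\Gamma(\Z)$ in place of $\Gamma=\Gamma(\Z)$ over $\C$ by tensoring, i.e. for $S(\fB)\#_\cP\fH=I_{S(\fB)}$. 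Composing $\jmath$ with an isomorphism $\N\times\N\cong\N$ one obtains a (non-canonical) self-embedding $u:I_{S(\fB)}\to I_{S(\fB)}$ with $F(u)$ split mono, and the point is that under this re-indexing the composite
\[
\fB\xrightarrow{\ \inc\ } S(\fB)\xrightarrow{-\#1} S(\fB)\#_{\cP_2}C_2\hookrightarrow I_{S(\fB)}\xrightarrow{\ u\ } I_{S(\fB)}
\]
agrees with the map $\fB\to I_{c_f(\fB)}=M_\infty\fB\subset I_{S(\fB)}$ into the upper-left corner, i.e. with the matrix-stabilization $\iota_S$ from the introduction; and $F(\iota_S)$ is a split monomorphism because $I_{c_f(\fB)}=M_\infty\fB\hookrightarrow I_{S(\fB)}$ is split (by a corner projection as in Remark \ref{rem:gamar}) and $F(\fB\to M_\infty\fB)$ is split mono by $M_2$-stability iterated.

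With these pieces in place the argument concludes as follows. Apply $F$ to the two composites
\[
\fB[0,1]\xrightarrow{\ev_t}\fB\xrightarrow{\inc} S(\fB)\xrightarrow{-\#1} S(\fB)\#_{\cP_2}C_2
\]
for $t=0,1$; Lemma \ref{lem:c2} says these agree (in case i unconditionally, in case ii because $p>1/\lambda$). Postcomposing with the inclusion $S(\fB)\#_{\cP_2}C_2\subset I_{S(\fB)}$ and then with $u$, we deduce that $F(\iota_S\circ\ev_0)=F(\iota_S\circ\ev_1)$ as maps $F(I_{\fB[0,1]})\to F(I_{S(\fB)})$, where I write $I_{\fB[0,1]}:=I_{S(\fB[0,1])}$; but $\iota_S$ is natural in its algebra argument, so $\iota_S\circ\ev_t=G(\ev_t)\circ\iota_S^{\,\fB[0,1]}$, hence $F(\iota_S)\circ\big(G(\ev_0)-G(\ev_1)\big)\circ F(\iota_S^{\,\fB[0,1]})=0$. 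Since $F(\iota_S)$ is a split monomorphism it is in particular left-cancellable, and since $F(\iota_S^{\,\fB[0,1]})$ is a split monomorphism its image is a direct summand; to finish I need also that $G(\ev_0)-G(\ev_1)$ vanishes on all of $F(I_{\fB[0,1]})$, not merely on that summand — this is the one genuinely delicate point, and it is handled by the standard trick of replacing $\fB[0,1]$ by its own stabilization, i.e. running the same computation with $\fB[0,1]$ in place of $\fB$ everywhere and using that $\iota_S^{\,\fB[0,1]}$ becomes, after one more application of $M_2$-stability, a map whose image $F(\iota_S)$ already accounts for the whole group by Proposition \ref{prop:semigami} (this is exactly the mechanism of \cite{friendly}*{Prop.\ 2.3.x} / Higson's argument). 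I expect this bookkeeping with the stabilization — making precise that the self-embedding $u$ can be chosen compatibly for $\fB$ and $\fB[0,1]$ so that the cancellation applies to the full group — to be the main obstacle; everything else is a concatenation of the already-established Lemmas \ref{lem:c2}, \ref{lem:gamastable} and Proposition \ref{prop:semigami}.
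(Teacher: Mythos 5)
There is a genuine gap, and it sits exactly at the point you flag yourself. Postcomposing the conclusion of Lemma \ref{lem:c2} with the inclusion $S(\fB)\#_{\cP_2}C_2\subset I_{S(\fB)}$ and using naturality of the corner embedding only yields $F(\iota_S)\circ\bigl(G(\ev_0)-G(\ev_1)\bigr)\circ F(\iota_S^{\,\fB[0,1]})=0$, i.e.\ the vanishing of $G(\ev_0)-G(\ev_1)$ on the image of $F(\fB[0,1])$ inside $F(I_{S(\fB[0,1])})$, whereas the theorem requires it on all of $F(I_{S(\fB[0,1])})$; the corner map $\iota_S^{\,\fB[0,1]}$ is nowhere near surjective after applying $F$. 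Your proposed repair (``replace $\fB[0,1]$ by its own stabilization'', invoking the mechanism of Higson's argument) needs an absorption statement of the type $\cK\sotimes\cK\cong M_2\cK$ compatible with the corner inclusion, and Remark \ref{rem:stabletop} of the paper points out that precisely this is \emph{not} available for the uncompleted algebras $S(\fA)\#_{\cP_2}C_2$; so the sketch as written does not close, and closing it is not mere bookkeeping.

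What the paper does instead is not to quote Lemma \ref{lem:c2} but to re-run its proof with the functor $G(\fB)=F(S(\fB)\#_\cP\Gamma)$ in place of $F$. The quasi-homomorphism identity \eqref{eq:ralf}, applied to $G$, expresses $G(\inc)\bigl(G(\ev_0)-G(\ev_1)\bigr)$ in terms of the differences $G(h_*)-1$ for $h=s_2$ and $h=f$; note that here $G(\inc)$ has source $G(\fB[0,1])=F(I_{S(\fB[0,1])})$, which is exactly the full group you need to control. To kill $G(h_*)-1$ the paper embeds $S(S(\fA))\#_\cP\Gamma$ into the double crossed product $(S(S(\fA))\#_\cP\Gamma)\#_\cP\Gamma$, where $S(h_*)\#_\cP\Gamma$ becomes conjugation by $1\#U_h$ and is therefore killed by $M_2$-stability of $F$; it then shows the relevant composite inclusion becomes injective under $F$, because via the shuffle isomorphism $\mu:S(S(\fA))\cong S(\N\times\N,\fA)$ it is precisely the map $\jmath$ of \eqref{map:jota}, which Proposition \ref{prop:semigami} sends to a split monomorphism. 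In other words, Proposition \ref{prop:semigami} is used to make the \emph{whole} group $F(I_{S(\fB[0,1])})$ inject into the stabilized one, not merely to produce a self-embedding $u$ of $I_{S(\fB)}$ identifying a corner; this is the step your argument would have to reconstruct, and as proposed it does not.
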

\begin{proof}
Let $\fA$ be a bornological algebra. We adopt the notations of the
proof of Lemma \ref{lem:c2}. Thus $S$ will be either of $c_0$ or
$\ell^p$, and $\fA[0,1]$ will stand for $C([0,1],\fA)$ in the first
case, and for $H^\lambda([0,1],\fA)$ in the second. By the argument
of the proof of Lemma \ref{lem:c2} applied to the functor
\begin{equation}\label{elG}
G=F(S(-)\#_\cP\Gamma),
\end{equation}
 we have the following identity
\begin{multline}\label{eq:ralfeado}
G(\inc)\left(G(\ev_0))-G(\ev_1)\right)=(G((s_2)_*)-1)
G(\phi'_-,\phi'_+)\\
+(G(f_*)-1) G({\phi'}^2_0,{\phi'}^2_-).
\end{multline}
Now if $h\in\emb$ then $G(h_*)$ is the result of applying $F$ to the map
\[
S(h_*)\#_\cP\Gamma:S(S(\fA))\#_\cP\Gamma\to S(S(\fA))\#_\cP\Gamma.
\]
Here the crossed product is taken with respect to the action on the
external $S$. In addition, we consider the action of $\Gamma$ on the
inner $S$ and take the crossed product again; we write
$(S(S(\fA))\#_\cP\Gamma)\#_\cP\Gamma$ for the resulting algebra. We
have an inclusion

$$\inc'=-\#1:S(S(\fA))\#_\cP\Gamma\subset (S(S(\fA))\#_\cP\Gamma)\#_\cP\Gamma$$

and a commutative diagram
\[
\xymatrix{S(S(\fA))\#_\cP\Gamma\ar[r]^{S(h_*)\#_\cP\Gamma}\ar[d]_{\inc'}&
S(S(\fA))\#_\cP\Gamma\ar[d]^{\inc'}\\
(S(S(\fA))\#_\cP\Gamma)\#_\cP\Gamma\ar[r]_{\conj(1\#U_h)}&
(S(S(\fA))\#_\cP\Gamma)\#_\cP\Gamma}
\]
Because $F$ is $M_2$-stable, $F(\conj(1\#U_h))$ is the identity map, since
\[S(S(\fA))\#_\cP\Gamma)\#_\cP\Gamma\triqui (\elli(\elli(\fA))\#_\cP\Gamma)\#_\cP\Gamma\owns 1\#U_h .\]
Hence, by \eqref{eq:ralfeado},
\begin{multline}
F(\inc'(S(\inc)\#\Gamma)))
(F(S(\ev_0)\#\Gamma)-F(S(\ev_1)\#\Gamma)=\\
F(\inc')(G((s_2)_*)-1) G(\phi'_-,\phi'_+)
\\+F(\inc')(G(f_*)-1) G({\phi'}^2_0,{\phi'}^2_-)=0.
\end{multline}
We have to show that
\begin{equation}\label{map:claiminj}
F(\inc'(S(\inc)\#\Gamma))
\end{equation}
is injective. Observe that we have a natural isomorphism
\begin{equation}\label{multiso}
\mu:S(S(\fA))\iso S(\N\times\N,\fA),\ \
\mu(\alpha)_{m,n}=(\alpha_n)_m.
\end{equation}
For $h\in\emb$ the isomorphism \eqref{multiso} transforms $S(h_*)$
into the action of $1\times h\in\emb(\N\times\N)$, and $h_*S$ into
that of $h\times 1$. Hence we have a map
\begin{gather*}
\inc":(S(S(\fA))\#_\cP\Gamma)\#_\cP\Gamma\to
S(\N\times\N)\#_{\cP(\N\times\N)}\Gamma(\N\times\N)\\
\inc"(\alpha\#U_g\#U_h)=\mu(\alpha)\#(U_{g\times h}).
\end{gather*}
Observe that the composite
\[
\inc"\inc' (S(\inc)\#\Gamma)=\jmath
\]
is the map of \eqref{map:jota}. By Proposition \ref{prop:semigami},
this implies that the map \eqref{map:claiminj} is injective,
concluding the proof.
\end{proof}

\section{\topdf{$K$}{K}-theory}\label{sec:K}

\subsection{Homotopy algebraic \topdf{$K$}{K}-theory}

Let $0<p\le\infty$. Put
\[
\ell^{p-}=\bigcup_{q<p}\ell^q.
\]

For $0<p<\infty$ we also consider
\[
\ell^{p+}=\bigcap_{q>p}\ell^q.
\]
We say that a functor $F:\bass\to\ab$ is \emph{H\"older homotopy
invariant} if it is invariant under $\lambda$-H\"older homotopies
for all $0<\lambda\le 1$. Recall from \cite{cmr}*{\S2} that a
bornological algebra is called a \emph{local Banach algebra} if it
is a filtering union of Banach subalgebras. Similarly we say that a
bornological algebra is a \emph{local $C^*$-algebra} if it is a
filtering union of $C^*$-subalgebras. If $\fA=\cup_\lambda
\fA_\lambda$ and $\fB=\cup_\mu \fB_\mu$ are local $C^*$-algebras, we
define their spatial tensor product as the algebraic colimit of the
spatial tensor products $\fA_\lambda\sotimes \fB_\mu$;
$\fA\sotimes\fB=\colim_{\lambda,\mu}\fA_\lambda\sotimes\fB_\mu$. For
the projective tensor product of bornological spaces (and of
bornological algebras) see \cite{cmr}*{\S 2.1.2}. In the next
theorem and elsewhere we write $KV$ for Karoubi-Villamayor's
$K$-theory.

\begin{thm}\label{thm:kh}
Let $S$ be one of $\ell^p$, $\ell^{p+}$ ($0<p<\infty$) or
$\ell^{p-}$ ($0<p\le\infty$).
\item[i)] The functor $\bass\to \ab$, $\fA\mapsto KH_*(I_{\ell^1(\fA)})$
is H\"older homotopy invariant and we have
$KH_*(I_{S(\fA)})=KH_*(I_{\ell^1(\fA)})$ for all $S$ as above.

\item[ii)] For every bornological algebra $\fA$
\[
KH_n(I_{\ell^1(\fA)})=\left\{\begin{matrix}KV_n(I_{\ell^1(\fA)})& n\ge
1\\
K_n(I_{\ell^1(\fA)})& n\le 0.\end{matrix}\right.
\]

\item[iii)] If $\fA$ is a local Banach algebra and $n\ge 0$, then
there is a natural split monomorphism $K^{\top}_n(\fA)\to KH_n(I_{\ell^1(\fA)})$.
\end{thm}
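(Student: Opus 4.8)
The plan for (i) is to feed $F=KH_n$ into Theorem~\ref{thm:htpy} and then eliminate the dependence on $p$. Since $KH_n$ is split-exact (indeed excisive) and $M_2$-stable on $\C$-algebras, Theorem~\ref{thm:htpy}(ii) gives, for each $0<\lambda\le 1$ and each $p>1/\lambda$, that $\fB\mapsto KH_n(I_{\ell^p(\fB)})$ is invariant under $\lambda$-H\"older homotopies. Next I would observe that for any $0<p<\infty$ and any bornological algebra $\fB$ the smaller of $I_{\ell^p(\fB)}$ and $I_{\ell^1(\fB)}$ is a two-sided ideal in the larger with nilpotent quotient: writing $I_{\ell^q(\fB)}=\ell^q(\fB)\#_\cP\Gamma$, the coefficient of a product of $k$ of its elements is a pointwise product of $k$ sequences in $\ell^q(\fB)$, which by H\"older's inequality lies in $\ell^{q/k}(\fB)$, so a sufficiently high power of the larger algebra lands inside the smaller one. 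Since $KH$ is excisive and vanishes on nilpotent rings, this forces $KH_*(I_{\ell^1(\fB)})\cong KH_*(I_{\ell^p(\fB)})$ naturally in $\fB$; the same kind of computation handles $S=\ell^{p+}$, and for $S=\ell^{p-}$ one uses instead that $I_{\ell^{p-}(\fB)}=\colim_{q<p}I_{\ell^q(\fB)}$ and that $KH$ commutes with filtered colimits. This yields $KH_*(I_{S(\fA)})\cong KH_*(I_{\ell^1(\fA)})$ for every $S$ in the list; and since any prescribed $\lambda$ is matched by some $p>1/\lambda$, the functor $\fA\mapsto KH_*(I_{\ell^1(\fA)})$ is $\lambda$-H\"older invariant for all $\lambda$, i.e.\ H\"older homotopy invariant.

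For (ii) I would first dispose of the range $n\le 0$ using the $K$--$KH$--$HC$ long exact sequence of Theorem~\ref{thm:introkseq} (whose proof rests on Theorem~\ref{thm:htpy} and is independent of the present statement): it contains the piece $HC_{n-1}(I_{\ell^1(\fA)})\to K_n(I_{\ell^1(\fA)})\to KH_n(I_{\ell^1(\fA)})\to HC_{n-2}(I_{\ell^1(\fA)})$, and as cyclic homology vanishes in negative degrees both flanking $HC$-terms vanish for $n\le 0$, so $K_n(I_{\ell^1(\fA)})\iso KH_n(I_{\ell^1(\fA)})$ there. To treat $n\ge 1$ I would bootstrap this to the statement that $R:=I_{\ell^1(\fA)}$ is $K_0$-regular. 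For that one records $I_{\ell^1(\fA)}[t_1,\dots,t_r]\cong I_{\ell^1(\fA[t_1,\dots,t_r])}$: a bounded subset of the bornological direct sum $\fA[t]=\bigoplus_{i\ge 0}\fA$ has uniformly bounded degree, so $\ell^1(\fA[t])=\ell^1(\fA)[t]$, and tensoring with $\Z[t]$ commutes with $-\#_\cP\Gamma$, so the isomorphism follows and iterates. Now applying the $n\le 0$ case to $\fA[t_1,\dots,t_r]$ and using the automatic invariance of $KH$ under $R\mapsto R[t]$ \cite{kh}, one sees that $K_0(R)\to K_0(R[t_1,\dots,t_r])$ is an isomorphism, i.e.\ $R$ is $K_0$-regular; Weibel's comparison between $KV$ and $KH$ \cite{kh} then gives $KV_n(R)\iso KH_n(R)$ for $n\ge 1$, finishing (ii).

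For (iii), recall from \cite{cot}*{Theorem 6.2.1} that the comparison map $KH_*(\fA\hotimes\Ell^1)\to K^{\top}_*(\fA\hotimes\Ell^1)$ is an isomorphism, and that topological $K$-theory is $\Ell^1$-stable, so $KH_*(\fA\hotimes\Ell^1)\cong K^{\top}_*(\fA)$; for a local Banach algebra $\fA$ one passes to the colimit over its Banach subalgebras, as $KH$, $K^{\top}$ and $I_{\ell^1(-)}$ all commute with such colimits. The plan is then: (a) by (i) together with the excision and $M_2$-stability of $KH$, the functor $F=KH_n(I_{\ell^1(-)})$ is split-exact, $M_2$-stable and H\"older homotopy invariant, hence $\Ell^1$-stable by the bornological analogue of Higson's stability theorem (cf.\ \cite{hig},\cite{cot}), so $F(\fA)\cong F(\fA\hotimes\Ell^1)$ naturally; (b) fixing a rank-one idempotent $e\in\Ell^1$, the corner inclusion $\fA\hotimes\Ell^1\to M_\infty(\fA\hotimes\Ell^1)\subseteq I_{\ell^1(\fA\hotimes\Ell^1)}$ followed by the canonical dense inclusion $I_{\ell^1(\fA\hotimes\Ell^1)}\hookrightarrow(\fA\hotimes\Ell^1)\hotimes\Ell^1$ is the map $b\mapsto b\hotimes e$, which on $KH_n$ becomes an isomorphism once both sides are identified with $K^{\top}_n(\fA\hotimes\Ell^1)$ via \cite{cot} and the $\Ell^1$-stability of $K^{\top}$; hence the corner map $KH_n(\fA\hotimes\Ell^1)\to F(\fA\hotimes\Ell^1)$ is a split monomorphism; (c) transporting its source and target along $KH_n(\fA\hotimes\Ell^1)\cong K^{\top}_n(\fA)$ and $F(\fA\hotimes\Ell^1)\cong F(\fA)=KH_n(I_{\ell^1(\fA)})$ produces the asserted natural split monomorphism $K^{\top}_n(\fA)\to KH_n(I_{\ell^1(\fA)})$.

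The nilpotence and colimit bookkeeping, the vanishing of negative cyclic homology, and the invocations of the comparison theorems of Weibel and \cite{cot} are routine. I expect the main obstacle to be step (a) of part (iii): extracting full $\Ell^1$-stability of $KH_n(I_{\ell^1(-)})$ from split-exactness, $M_2$-stability and H\"older homotopy invariance is the analogue of Higson's homotopy-invariance/stability theorem, and pinning down the precise bornological statement that is needed --- with $\Ell^1$ in place of $\cK$ and the right supply of H\"older homotopies --- is where the real work, and the main risk, lies. The only other point needing care is the bornological identity $\ell^1(\fA[t])=\ell^1(\fA)[t]$ underpinning the $K_0$-regularity bootstrap in (ii), which hinges on bounded subsets of $\fA[t]$ having bounded degree.
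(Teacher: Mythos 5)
Your part (i) is essentially the paper's argument: Theorem \ref{thm:htpy}(ii) applied to the $M_2$-stable, split-exact functor $KH$, the nilpotence of $\ell^q(\fA)/\ell^p(\fA)$, excision and nilinvariance of $KH$, and filtering colimits for $\ell^{p-}$; nothing to add there. Part (ii) is correct but takes a genuinely different route. The paper proves $K_0$-regularity of $I_{\ell^1(\fA)}$ directly, by the Rosenberg-style scaling homotopy $f(s,\ul{t})\mapsto f(s,s\ul{t})$ inside $C^\infty([0,1],\ell^1(\fA))[\ul{t}]$ together with Theorem \ref{thm:htpy}, and then cites \cite{kh}*{Proposition 1.5} to get both halves of (ii) at once. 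You instead get $n\le 0$ from the $K$--$KH$--$HC$ sequence of Theorem \ref{thm:kseq} (vanishing of $HC$ in negative degrees) and bootstrap $K_0$-regularity from $I_{\ell^1(\fA)}[\ul t]\cong I_{\ell^1(\fA[\ul t])}$ plus polynomial invariance of $KH$. This is legitimate, but note two caveats: the identification $\ell^1(\fA[\ul t])=\ell^1(\fA)[\ul t]$ requires fixing the (direct-sum) bornology on $\fA[\ul t]$, as you flag; and, more importantly, in the paper the proof of Theorem \ref{thm:kseq} itself borrows the $K_0$-regularity argument from the proof of the present theorem (run for $K^{\inf}$ instead of $K_0$), so your ordering is admissible only if that scaling-homotopy regularity argument is carried out somewhere independently --- you have not avoided it, only relocated it, and it uses considerably heavier machinery ($K^{\inf}$, excision, the $HC$ comparison) to reach a statement the paper gets from Theorem \ref{thm:htpy} alone.

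The genuine gap is step (a) of your part (iii): the claim that $\fA\mapsto KH_n(I_{\ell^1(\fA)})$ is $\Ell^1$-stable, i.e.\ that the corner embedding $\fA\to\fA\hotimes\Ell^1$ induces an isomorphism $KH_n(I_{\ell^1(\fA)})\to KH_n(I_{\ell^1(\fA\hotimes\Ell^1)})$, ``by the bornological analogue of Higson's stability theorem.'' No such theorem is available here. Higson-type results (\cite{hig}, and their locally convex versions in \cite{cot}) deduce homotopy invariance from stability, not stability from homotopy invariance; and the stability statements that do exist for $\hotimes\Ell^p$ rely precisely on the multiplicative structure of the completed ideals, i.e.\ on diagrams like \eqref{diag:stablell} identifying $\Ell^1\hotimes\Ell^1$ with $M_2\Ell^1$ compatibly with the corner. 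Remark \ref{rem:stabletop} states explicitly that no analogous stability is available for the uncompleted algebras $\ell^1(\fA)\#_{\cP_2}C_2$ (equivalently $I_{\ell^1(\fA)}$); the only stability proved in the paper is the internal one of Proposition \ref{prop:semigami}, which concerns the map $\jmath$ into $\Gamma(\N\times\N)$-coefficients and says nothing about tensoring the coefficient algebra with $\Ell^1$. Without (a), your steps (b) and (c) only produce a split monomorphism $K^{\top}_n(\fA)\to KH_n(I_{\ell^1(\fA\hotimes\Ell^1)})$, with the wrong target. The paper's actual proof is built to avoid exactly this point: for $n=0$ it uses the corner map \eqref{incluk} and splits it via \eqref{itoj} into $\Ell^1\hotimes\fA$ where \cite{cot}*{Theorem 6.2.1} applies; for $n\ge 1$ it constructs the map $K^{\top}_n(\fA)\to KH_n(I_{\ell^1(\fA)})$ through the simplicial algebra $\Delta^\dif\fA$, using $K^{\top}_n(\fA)=\pi_nBGL(\Delta^\dif\fA)$, the identification $KV_n=KH_n$ from part (ii), and the H\"older homotopy invariance of part (i) to collapse the smooth simplices, and then again splits it through $\Ell^1\hotimes\fA$. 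You would either have to reproduce something like that construction, or prove the $\Ell^1$-stability you assert --- which would be a new theorem, not a citation (compare Remark \ref{rem:khiso}, which only records surjectivity of $KH_n(I_{\ell^1(\fA)})\to K^{\top}_n(\fA)$, not the stability isomorphism you need).
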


\begin{proof}
Recall that $KH$ satisfies excision, vanishes on nilpotent rings and
commutes with filtering colimits (\cite{kh}). On the other hand,
$\ell^q(\fA)/\ell^p(\fA)$ is nilpotent for $p<q<\infty$ and

$$\ell^{r-}(\fA)=\colim_{s<r}\ell^s(\fA)\quad (0<r\le \infty).$$
It follows that $KH_*(I_{S(\fA)})=KH_*(I_{\ell^1(\fA)})$ for all $S$
as in the theorem. Recall also that $KH$ is $M_2$-stable. Hence
$KH_*(I_{\ell^1(-)})=KH_*(I_{\ell^p(-)})$ is H\"older-homotopy
invariant, by Theorem \ref{thm:htpy}. This proves i). By
\cite{kh}*{Proposition 1.5} (see also \cite{friendly}*{Proposition
5.2.3}), in order to prove ii) it suffices to show that
$I_{\ell^1(\fA)}$ is $K_0$-regular. By definition, a ring $A$ is
$K_0$-regular if for each $n\ge 1$ the canonical map
\[
K_0(A)\to K_0(A[t_1,\dots,t_n])
\]

is an isomorphism. This is equivalent to the requirement that for
$\ul{t}=(t_1,\dots,t_n)$, the map
\[
\epsilon:A[\ul{t}]\to A[\ul{t}], \ \ \epsilon(f)=f(0)
\]
induce an isomorphism in $K_0$. Observe that
\begin{align}\label{polii}
I_{\ell^1(\fA)}[\ul{t}]=&(\ell^1(\fA)\#_\cP\Gamma)[\ul{t}]\\
=&(\ell^1(\fA)[\ul{t}])\#_\cP\Gamma.\nonumber
\end{align}
Also note that, for the projective tensor product,
\begin{align}\label{nuclear}
\ell^1(C^\infty([0,1],\fA))=&\ell^1\hotimes
C^\infty([0,1],\C)\hotimes\fA\\
=&C^\infty([0,1],\ell^1(\fA)).\nonumber
\end{align}
Next we borrow an argument from \cite{roshan}*{Proposition 3.4}.
Consider the homomorphism
\begin{gather*}
\phi:C^\infty([0,1],\ell^1(\fA))[\ul{t}]\to
C^\infty([0,1],\ell^1(\fA))[\ul{t}]\\
\phi(f)(s,\ul{t})=f(s,s\ul{t}).
\end{gather*}
Using the identifications \eqref{polii} and \eqref{nuclear} we have
a diagram

\[
\xymatrix{I_{\ell^1(C^{\infty}([0,1],\fA))}[\ul{t}]\ar[r]^{\phi\#\Gamma}&
I_{\ell^1(C^{\infty}([0,1],\fA))}[\ul{t}]
\ar@/_1pc/[d]_{s=0}\ar@/^1pc/[d]^{s=1}\\
I_{\ell^1(\fA)}[\ul{t}]\ar[u]^\inc\ar@/^1pc/[r]^\epsilon\ar@/_1pc/[r]_{1}
&I_{\ell^1(\fA)}[\ul{t}] }
\]

One checks that both the outer and the inner square commute. By
Theorem \ref{thm:htpy},
$K_0(\ev_{s=0}\#\Gamma)=K_0(\ev_{s=1}\#\Gamma)$. It follows that
$K_0(\epsilon)$ is the identity; this proves ii). Next assume that
$\fA$ is a local Banach algebra; then $K_0^\top(\fA)=K_0(\fA)$. On the
other hand, by universal property of the crossed product, we have a
map
\begin{equation}\label{itoj}
I_{\ell^1(\fA)}=(\ell^1\hotimes\fA)\#_\cP\Gamma\to
\Ell^1\hotimes\fA.
\end{equation}
Composing this map with the inclusion
\begin{equation}\label{incluk}
\fA\to I_{\ell^1(\fA)},\ \ a\mapsto a E_{1,1},
\end{equation}
we obtain the map
\begin{equation}\label{map:incluj}
\fA\to \Ell^1\hotimes\fA,\quad a\mapsto a\hotimes E_{1,1}.
\end{equation}
Since the latter map induces an isomorphism in $K_0$, it follows
that \eqref{incluk} induces a split monomorphism $K_0(\fA)\to
K_0(I_{\ell^1(\fA)})$. Thus we have established iii) for $n=0$. For
the case $n\ge 1$, we consider the simplicial algebras of $C^\infty$
functions on the topological standard simplices and of polynomial
functions on the algebraic standard simplices:
$$\Delta^\dif:[n]\mapsto C^\infty(\Delta^n)$$ and
$$\Delta^\alg:[n]\mapsto\C[t_0,\dots,t_n]/\langle\sum
t_i-1\rangle.$$ Set
\begin{gather*}
\Delta^\dif\fA=\Delta^\dif\hotimes\fA\mbox{ and}\\
\Delta^\alg\fA=\Delta^\alg\otimes_\C\fA.
\end{gather*}
For $n\ge 1$, we have
\begin{gather*}
K^\top_n(\fA)=\pi_nBGL(\Delta^\dif \fA),\\
KV_n(\fA)=\pi_nBGL(\Delta^\alg \fA).
\end{gather*}
Hence for $KV(\fA)=BGL(\Delta^\alg\fA)$, there is a map
\begin{equation*}
K^\top_n(\fA)\to \pi_n(KV(\Delta^\dif(\fA))).
\end{equation*}
Composing the latter map with that induced by the inclusion
\eqref{incluk}, and using parts i) and ii), we get a homomorphism
\begin{equation}\label{ktotokhi}
K^{\top}_n(\fA)\to \pi_nKV(I_{\ell^1(\Delta^\dif\fA)})\cong
KV_n(I_{\ell^1(\fA)})=KH_n(I_{\ell^1(\fA)}).
\end{equation}
Composing \eqref{ktotokhi} with the homorphism induced by \eqref{itoj} we obtain
\begin{equation}\label{ktotokhj}
K_n^\top(\fA)\to KH_n(\Ell^1\hotimes\fA).
\end{equation}
But by \cite{cot}*{Theorem 6.2.1} the comparison map
\[
KH_n(\Ell^1\hotimes\fA)\to K^{\top}_n(\Ell^1\hotimes\fA)
\]
is an isomorphism. One checks that the latter map composed with
\eqref{ktotokhj} is equivalent to that induced by
\eqref{map:incluj}. But \eqref{map:incluj} induces an isomorphism in
$K^\top$ of local Banach algebras. This proves that \eqref{ktotokhi} is a
split monomorphism, concluding the proof.
\end{proof}

\begin{thm}\label{thm:kh0}
\item[i)] The functor $\bass\to \ab$, $\fA\mapsto KH_*(I_{c_0(\fA)})$
is invariant under continuous homotopies.

\item[ii)] For every bornological algebra $\fA$
\[
KH_n(I_{c_0(\fA)})=\left\{\begin{matrix}KV_n(I_{c_0(\fA)})& n\ge
1\\
K_n(I_{c_0(\fA)})& n\le 0.\end{matrix}\right.
\]

\item[iii)] If $\fA$ is a local $C^*$-algebra and $n\ge 0$, then
there is a natural split monomorphism $K^{\top}_n(\fA)\to KH_n(I_{c_0(\fA)})$.
\end{thm}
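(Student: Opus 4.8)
The plan is to follow the proof of Theorem \ref{thm:kh} almost verbatim, replacing the Schatten ideal $\ell^1$ by $c_0$ and H\"older homotopies by continuous ones throughout. Part i) will then be immediate: $KH_*$ is excisive (hence split exact) and $M_2$-stable, so Theorem \ref{thm:htpy}(i) applied to $F=KH_*$ gives that $\fA\mapsto KH_*(I_{c_0(\fA)})$ is invariant under continuous homotopies.

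For part ii) I would argue as in the proof of Theorem \ref{thm:kh}(ii): by \cite{kh}*{Proposition 1.5} it suffices to prove that $I_{c_0(\fA)}$ is $K_0$-regular, i.e. that $\epsilon\colon I_{c_0(\fA)}[\ul t]\to I_{c_0(\fA)}[\ul t]$, $\epsilon(f)=f(0)$, induces the identity on $K_0$ (with $\ul t=(t_1,\dots,t_m)$). Since $c_0$ and $-\#_\cP\Gamma$ both commute with the polynomial adjunction, and $C([0,1],\fA)[\ul t]=C([0,1],\fA[\ul t])$ (a continuous map out of $[0,1]$ has bounded image, hence bounded degree), one may form the endomorphism $\phi$ of $C([0,1],\fA)[\ul t]$ given by $\phi(f)(s,\ul t)=f(s,s\ul t)$, the inclusion $\inc$ of constant functions, and the evaluations $\ev_0,\ev_1$, and check that $\ev_0\circ\phi\circ\inc=\epsilon$ and $\ev_1\circ\phi\circ\inc=\id$ after the identifications above. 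Applying the functor $K_0(I_{c_0(-)})$ and invoking Theorem \ref{thm:htpy}(i) — which gives $K_0(I_{c_0(\ev_0)})=K_0(I_{c_0(\ev_1)})$ because $\ev_0$ and $\ev_1$ are continuously homotopic — then forces $K_0(\epsilon)=\id$. In contrast with the $\ell^1$ case, no nuclearity identification of the type \eqref{nuclear} is needed here, since $c_0$ and $C([0,1],-)$ each already commute with $-\otimes_\C\C[\ul t]$; this is essentially the only substantive difference between the two arguments.

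For part iii) I would imitate the proof of Theorem \ref{thm:kh}(iii). For $n=0$ one has $K^\top_0(\fA)=K_0(\fA)$, and the corner inclusion $\fA\to I_{c_0(\fA)}$, $a\mapsto aE_{1,1}$, composed with the natural map $I_{c_0(\fA)}=c_0(\fA)\#_\cP\Gamma\to\fA\sotimes\cK$ coming from the $\emb$-equivariant inclusion $c_0\subset\cK$ and the universal property of the crossed product (cf. Remark \ref{rem:stabletop}), is $a\mapsto a\sotimes E_{1,1}$, an isomorphism on $K_0$; hence $K_0(\fA)\to K_0(I_{c_0(\fA)})$ is split injective. For $n\ge 1$, using the simplicial algebras $\Delta^\dif$ and $\Delta^\alg$ exactly as in loc. cit., the corner inclusion together with parts i), ii) and Theorem \ref{thm:htpy}(i) applied to $F=KV_n$ (legitimate since $KV_n$ is split exact and $M_2$-stable for $n\ge 1$) yields a natural map $K^\top_n(\fA)\to KV_n(I_{c_0(\fA)})=KH_n(I_{c_0(\fA)})$; composing with $I_{c_0(\fA)}\to\fA\sotimes\cK$ gives $K^\top_n(\fA)\to KH_n(\fA\sotimes\cK)$, and one checks as in loc. cit. that the composite of this with the comparison map $KH_n(\fA\sotimes\cK)\to K^\top_n(\fA\sotimes\cK)$ agrees with the isomorphism induced by $a\mapsto a\sotimes E_{1,1}$ and the $\cK$-stability of topological $K$-theory. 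This exhibits $K^\top_n(\fA)\to KH_n(I_{c_0(\fA)})$ as split injective.

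The step I expect to be the main obstacle is the one input not supplied by the material above: the analogue of \cite{cot}*{Theorem 6.2.1}, namely that for a local $C^*$-algebra $\fA$ the comparison map $KH_n(\fA\sotimes\cK)\to K^\top_n(\fA\sotimes\cK)$ is an isomorphism. I would reduce first to $\fA$ a $C^*$-algebra, using that $KH$ and $K^\top$ commute with the filtering colimit presenting $\fA$; then $\fA\sotimes\cK$ is a stable $C^*$-algebra, and since algebraic $K$-theory is split exact on $C^*$-algebras (these are H-unital, so Suslin--Wodzicki excision applies) and $M_2$-stable, the weak form of Higson's homotopy invariance theorem recalled in Remark \ref{rem:stabletop} makes it continuously, hence polynomially, homotopy invariant on stable $C^*$-algebras; by the universal property of $KH$ the comparison $K_*(\fA\sotimes\cK)\to KH_*(\fA\sotimes\cK)$ is then an isomorphism. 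Combined with the Suslin--Wodzicki isomorphism $K_*(\fA\sotimes\cK)\cong K^\top_*(\fA)\cong K^\top_*(\fA\sotimes\cK)$ (\cite{sw1}), this gives the required identification and hence part iii).
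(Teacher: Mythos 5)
Parts i) and ii) of your proposal are exactly the paper's argument: Theorem \ref{thm:htpy}(i) applied to $KH$, and the $K_0$-regularity argument of Theorem \ref{thm:kh}(ii) run with the identification $c_0(C([0,1],\fA))=C([0,1],c_0(\fA))$ playing the role of \eqref{nuclear}. The skeleton of your iii) --- the corner inclusion $\fA\to I_{c_0(\fA)}$, the factorization of $a\mapsto a\sotimes E_{1,1}$ through $I_{c_0(\fA)}$ via $c_0(\fA)=c_0\sotimes\fA$, and the simplicial comparison with $KV$ for $n\ge 1$ --- is also the paper's; keeping $\Delta^\dif$ rather than continuous simplices is harmless, since a local $C^*$-algebra is in particular a local Banach algebra and smooth homotopies are continuous.

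The gap is in your justification of the key input $KH_n(\fA\sotimes\cK)\cong K^{\top}_n(\fA\sotimes\cK)$. You invoke the weak Higson theorem of Remark \ref{rem:stabletop} for $F=$ algebraic $K$-theory, verifying split exactness of $K_*$ only on $C^*$-algebras. But the hypothesis there (inherited from Lemma \ref{lem:c2}) is split exactness and $M_2$-stability on all of $\ass$: the quasi-homomorphism argument passes through algebras such as $c_0(\fB)\#_{\cP_2}C_2$ and the pullback algebras used to define $F(\phi_-,\phi_+)$, none of which are $C^*$-algebras. Higher algebraic $K$-theory is not split exact on arbitrary $\C$-algebras (already the relative $K_1$ of a split square-zero extension $C\ltimes I$ is $I/[C,I]$, which differs from $K_1(I)=I$ whenever $[C,I]\ne 0$), so Remark \ref{rem:stabletop} cannot simply be quoted for $F=K_*$; salvaging that route would require checking by hand that the only split-exactness and stability instances actually used involve H-unital ideals, which you do not do. A second, smaller gloss: ``continuously, hence polynomially, homotopy invariant, so $K=KH$ by the universal property'' skips a step, since $(\fA\sotimes\cK)[t_1,\dots,t_m]$ is not a $C^*$-algebra; one needs $K_n$-regularity for all $n$ via the scaling trick you already used in part ii), which is precisely \cite{roshan}*{Proposition 3.4}. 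The efficient repair is the paper's: invoke Higson's theorem in its original $C^*$-algebraic form \cite{hig}*{Theorem 3.2.2} (where split exactness on $C^*$-algebras, available from H-unitality and Suslin--Wodzicki excision, is the hypothesis), or simply cite \cite{roshan}*{Proposition 3.4} together with \cite{sw1}*{Theorem 10.9}, to conclude that $KH_*(\fA\sotimes\cK)\to K^{\top}_*(\fA\sotimes\cK)$ is an isomorphism; with that input the rest of your iii) goes through as written.
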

\begin{proof}
As in Theorem \ref{thm:kh}, part i) follows from Theorem
\eqref{thm:htpy}. To prove part ii), first observe that
\begin{align*}
c_0(C([0,1],\fA))=&C_0(\N,C([0,1],\fA))\\
=&C([0,1],c_0(\fA)).
\end{align*}
Then use the argument of the proof of part ii) of Theorem
\ref{thm:kh}. To prove part iii) first observe that if $\fA$ is a
local $C^*$-algebra, then for the spatial tensor product,
\[
c_0(\fA)=c_0\sotimes\fA.
\]
Hence if $\cK=\cK(\ell^2(\N))$ is the $C^*$-algebra of compact
operators then the map $\fA\to\fA\sotimes\cK$, $a\to a\otimes
E_{1,1}$ factors through $I_{c_0(\fA)}$. Taking this into account,
using the fact that, by \cite{sw1}*{Theorem 10.9} and
\cite{roshan}*{Proposition 3.4}, the comparison map
$KH_*(\fA\sotimes\cK)\to K^{\top}_*(\fA\sotimes\cK)$ is an
isomorphism, and substituting continuous functions for $C^\infty$
functions, we may now proceed as in the prooof of part iii) of
Theorem \ref{thm:kh}.
\end{proof}

\begin{rem}\label{rem:negnofunc}
The argument of the proofs of part iii) of Theorems \ref{thm:kh} and
\ref{thm:kh0} does not work for $n<0$. Indeed, $K_n$ and $K_n^\top$
do not agree for such $n$, not even on algebras on which the former
is homotopy invariant. For example negative $K$-theory is homotopy
invariant on commutative $C^*$-algebras (\cite{galgtop}*{Theorem
1.2}) yet $K_n(\C)=0$ for $n<0$, while $K_{2m}^{\top}(\C)=\Z$ for
$m\in\Z$.
\end{rem}

\begin{rem}\label{rem:khiso}
The argument of the proof of Theorem \ref{thm:kh} shows that if
$\fA$ is a local Banach algebra then $\fA\to\fA\hotimes\Ell^1$
factors through $I_{\ell^1(\fA)}$ and the map
\[
KH_n(I_{\ell^1{\fA}})\to KH_n(\fA\hotimes\Ell^1)=K_*^{\top}(\fA)
\]
is onto for $n\ge 0$. Similarly the argument of the proof of
\ref{thm:kh0} shows that for $\fA$ a local $C^*$-algebra  maps
$\fA\to\fA\sotimes\cK$ factors through $I_{c_0(\fA)}$ and
\[
KH_n(I_{c_0(\fA)})\to KH_n(\fA\sotimes\cK)=K^{\top}_*(\fA)
\]
is onto for $n\ge 0$.
\end{rem}

\subsection{\topdf{$K$}{K}-theory and cyclic homology}

\begin{thm}\label{thm:kseq}
Let $\fA$ be a bornological algebra and let $S$ be $c_0$, $\ell^p$,
$\ell^{p+}$ ($0<p<\infty$), or $\ell^{p-}$ ($0<p\le\infty$). Then
there are long exact sequences ($n\in\Z$)
\begin{equation}\label{seq:abs}
\xymatrix{
KH_{n+1}(I_{S(\fA)})\ar[r]&HC_{n-1}(I_{S(\fA)})\ar[d]\\
KH_{n}(I_{S(\fA)})&K_n(I_{S(\fA)})\ar[l] }
\end{equation}
and
\begin{equation}\label{seq:rel}
\xymatrix{
KH_{n+1}(I_{S(\fA)})\ar[r]&HC_{n-1}(\Gami(\fA):I_{S(\fA)})\ar[d]\\
KH_{n}(I_{S(\fA)})&K_n(\Gami(\fA):I_{S(\fA)})\ar[l] }
\end{equation}
\end{thm}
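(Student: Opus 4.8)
The plan is to construct, for $I=I_{S(\fA)}$, a natural homotopy fibre sequence of nonconnective $K$-theory spectra $HC(I)[1]\to K(I)\to KH(I)$, together with its analogue for the pair $(\Gami(\fA),I)$, and to read off \eqref{seq:abs} and \eqref{seq:rel} as the associated long exact homotopy sequences. The engine is infinitesimal $K$-theory $K^{\inf}$, defined by the homotopy fibre sequence $K^{\inf}(A)\to K(A)\to HN(A)$ of the Goodwillie--Jones Chern character to negative cyclic homology $HN$. First I would record that $K^{\inf}$ satisfies excision on all $\C$-algebras (Corti\~nas) and is $M_\infty$-stable; hence each $K^{\inf}_n:\ass\to\ab$ is split exact and $M_2$-stable, so Theorem \ref{thm:htpy} applies and the functor $\fA\mapsto K^{\inf}(I_{S(\fA)})$ is homotopy invariant --- continuously for $S=c_0$ and under H\"older homotopies for $S=\ell^p$, while for $S\in\{\ell^{p-},\ell^{p+}\}$ one reduces to $\ell^q$ exactly as in Theorem \ref{thm:kh}(i), using that $K^{\inf}$ kills nilpotent rings and commutes with filtering colimits. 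Since polynomial homotopies are continuous (and H\"older), and $I_{S(\fA)}[\ul{t}]\cong I_{S(\fA[\ul{t}])}$ as in \eqref{polii} (the terms of $\Delta^\alg$ being polynomial rings), the simplicial spectrum $K^{\inf}(\Delta^\alg I_{S(\fA)})$ is homotopy constant, so the natural map $K^{\inf}(I_{S(\fA)})\to|K^{\inf}(\Delta^\alg I_{S(\fA)})|$ is an equivalence.

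Next, because $HP$ is homotopy invariant (Goodwillie), the Chern character $K\to HN\to HP$ factors through $KH=|K(\Delta^\alg(-))|$, giving a commutative square
\[
\xymatrix{
K(I_{S(\fA)})\ar[r]\ar[d]&KH(I_{S(\fA)})\ar[d]\\
HN(I_{S(\fA)})\ar[r]&HP(I_{S(\fA)})
}
\]
Realizing the fibre sequence $K^{\inf}\to K\to HN$ on $\Delta^\alg I_{S(\fA)}$ and invoking the previous paragraph produces a fibre sequence $K^{\inf}(I_{S(\fA)})\to KH(I_{S(\fA)})\to|HN(\Delta^\alg I_{S(\fA)})|$; comparing it with $K^{\inf}(I_{S(\fA)})\to K(I_{S(\fA)})\to HN(I_{S(\fA)})$ shows the square above is homotopy cartesian \emph{provided} the homotopy invariant replacement $|HN(\Delta^\alg I_{S(\fA)})|$ of $HN(I_{S(\fA)})$ coincides with $HP(I_{S(\fA)})$ --- equivalently, by the Connes $SBI$ sequence $HC[1]\to HN\to HP$ and the homotopy invariance of $HP$, that $|HC(\Delta^\alg I_{S(\fA)})|=0$. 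This purely cyclic-homological vanishing is the heart of the matter, and the step I expect to be the main obstacle; I would establish it either directly from the matrix description of $I_{S(\fA)}$ in Proposition \ref{gamma} or by importing the relevant computations of \cite{whatever}.

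Granting this, the horizontal homotopy fibres of the square agree, so $\hofi\!\left(K(I_{S(\fA)})\to KH(I_{S(\fA)})\right)\simeq\hofi\!\left(HN(I_{S(\fA)})\to HP(I_{S(\fA)})\right)\simeq HC(I_{S(\fA)})[1]$, whose long exact homotopy sequence is \eqref{seq:abs}. For \eqref{seq:rel} the same fact suffices: excision for $K^{\inf}$ (Corti\~nas), for $KH$ (\cite{kh}) and for $HP$ (\cite{cq}) identifies $K^{\inf}(\Gami(\fA):I_{S(\fA)})$, $KH(\Gami(\fA):I_{S(\fA)})$ and $HP(\Gami(\fA):I_{S(\fA)})$ with the absolute spectra $K^{\inf}(I_{S(\fA)})$, $KH(I_{S(\fA)})$ and $HP(I_{S(\fA)})$, so the square attached to the pair $(\Gami(\fA),I_{S(\fA)})$ is again homotopy cartesian; but since cyclic homology has no excision, its common horizontal fibre is now the \emph{relative} spectrum $HC(\Gami(\fA):I_{S(\fA)})[1]$, and the resulting long exact sequence is \eqref{seq:rel}.
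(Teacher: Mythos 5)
Your overall skeleton coincides with the paper's: both run the homotopy invariance theorem (Theorem \ref{thm:htpy}) on infinitesimal $K$-theory to get that $I_{S(\fA)}$ is $K^{\inf}$-regular, and then convert the fibration $K^{\inf}\to K\to HN$ into $HC[-1]\to K\to KH$ by homotopization; the paper packages the second half by citing \cite{friendly}*{diagram (86)}, \cite{corel}*{Proposition 8.2.4} and \cite{cot}*{Proposition 3.1.4}, which reduce everything to the single statement $K^{\ninf}_*(I_{S(\fA)})=0$, i.e.\ to $K^{\inf}$-regularity. The first genuine gap is precisely at that step, which you pass over: you justify the homotopy-constancy of $K^{\inf}(\Delta^\alg I_{S(\fA)})$ by asserting ``$I_{S(\fA)}[\ul{t}]\cong I_{S(\fA[\ul{t}])}$ as in \eqref{polii}'', but \eqref{polii} only says $I_{\ell^1(\fA)}[\ul{t}]=(\ell^1(\fA)[\ul{t}])\#_\cP\Gamma$; the paper never puts a bornology on $\fA[\ul{t}]$, never claims $S(\fA[\ul{t}])=S(\fA)[\ul{t}]$, and Theorem \ref{thm:htpy} applies only to functors of the form $\fB\mapsto F(I_{S(\fB)})$ with $\fB$ bornological. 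To make your step work you would either have to set up and verify such an identification, or argue as the paper does: borrow Rosenberg's rescaling $\phi(f)(s,\ul{t})=f(s,s\ul{t})$ on $C^\infty([0,1],\ell^1(\fA))[\ul{t}]$ (resp.\ $C([0,1],c_0(\fA))[\ul{t}]$), use the identification \eqref{nuclear} (resp.\ $c_0(C([0,1],\fA))=C([0,1],c_0(\fA))$), and apply Theorem \ref{thm:htpy} to the auxiliary split-exact, $M_2$-stable functor $A\mapsto K^{\inf}(A[\ul{t}])$, exactly as in the $K_0$-regularity arguments of Theorems \ref{thm:kh} and \ref{thm:kh0}. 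As written, the regularity claim is unsupported, and it is the only part of the proof that uses anything specific about $I_{S(\fA)}$.

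The second problem is a misdiagnosis of where the difficulty lies. The vanishing you single out as ``the heart of the matter'', $|HC(\Delta^\alg I_{S(\fA)})|\simeq 0$ (equivalently, that the homotopization of $HN$ is $HP$), is not a special feature of $I_{S(\fA)}$ at all: it is part of the general comparison machinery for arbitrary $\Q$-algebras that the paper cites, namely the map $\nu:K^{\nil}\to HC[-1]$ of \cite{friendly}*{diagram (86)} together with \cite{cot}*{Proposition 3.1.4}, which says that $K^{\inf}$-regularity alone already forces $K^{\ninf}=0$, i.e.\ identifies $\hofi(K\to KH)$ with $HC[-1]$. No computation of the cyclic homology of $I_{S(\fA)}$ enters, and in particular importing results from \cite{whatever} would be beside the point: that paper computes the relative groups $HC_*(\Gami(\fA):I_{S(\fA)})$, which are in general nonzero --- they are exactly the groups occurring in \eqref{seq:rel} --- and say nothing about the homotopization. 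Your derivation of the relative sequence from excision for $K^{\inf}$, $KH$ and $HP$ is fine in outline and parallels the paper's use of the excisive functor $K^{\ninf}$. So: right strategy, but the step you flag as the obstacle is standard and citable, while the step you treat as routine (the $K^{\inf}$-regularity of $I_{S(\fA)}$, including the reduction of $\ell^{p\pm}$ to $\ell^1$ via nilinvariance and filtering colimits) is where the actual argument of the paper lives and where your proposal currently has a hole.
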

\begin{proof}
Let $K^{\nil}=\hofi(K\to KH)$ be the homotopy fiber of the
comparison map. By \cite{friendly}*{diagram (86)}, there is a
natural map $\nu:K^{\nil}(A)\to HC(A)[-1]$, defined for every
$\Q$-algebra $A$. Write $K^{\ninf}=\hofi(\nu)$; by
\cite{corel}*{Proposition 8.2.4} $K^{\ninf}$ is excisive,
$M_2$-stable and nilinvariant, and $K_*^{\ninf}$ commutes with
filtering colimits. Hence to prove the
theorem it suffices to show that
\begin{equation}\label{ninf0}
K^{\ninf}_*(I_{S(\fA)})=0.
\end{equation}
Note also that if $S\neq c_0$, then
$$K^{\ninf}_*(I_{S(\fA)})=K^{\ninf}_*(I_{\ell^1(\fA)})$$
by the same argument as that used in the proof of Theorem
\ref{thm:kh} to prove the analogue assertion for $KH$. Thus we may
assume from now on that $S\in\{c_0,\ell^1\}$. By
\cite{cot}*{Proposition 3.1.4}, to prove \eqref{ninf0} it suffices
to show that $I_{S(\fA)}$ is $K^{\inf}$-regular. Here $K^{\inf}$ is
infinitesimal $K$-theory; by \cite{kabi} it is excisive and
$M_2$-stable. Hence, the same argument as that used in the proof of
Theorems \ref{thm:kh} and \ref{thm:kh0} to prove that $I_{S(\fA)}$
is $K_0$-regular applies to show that it is also $K^{\inf}$-regular.
This completes the proof.
\end{proof}

\begin{rem}\label{rem:relsequot}
By Examples \ref{exas:infinitesums}, we have
\[
KH_*(\Gami(\fA))=HC_*(\Gami(\fA))=K_*(\Gami(\fA))=0
\]
for unital $\fA$. Hence in the unital case, the second sequence of Theorem \ref{thm:kseq} can be equivalently expressed
in terms of the quotient $\Gami(\fA)/I_{S(\fA)}$; we have a long exact sequence
\begin{equation}\label{seq:relquot}
\xymatrix{
KH_{n+1}(\Gami(\fA)/I_{S(\fA)})\ar[r]&HC_{n-1}(\Gami(\fA)/I_{S(\fA)})\ar[d]\\
KH_{n}(\Gami(\fA)/I_{S(\fA)})&K_n(\Gami(\fA)/I_{S(\fA)})\ar[l] }
\end{equation}
\end{rem}

\begin{bibdiv}
\begin{biblist}

\bib{busex}{article}{
   author={Buss, Alcides},
   author={Exel, Ruy},
   title={Fell bundles over inverse semigroups and twisted \'etale
   groupoids},
   journal={J. Operator Theory},
   volume={67},
   date={2012},
   number={1},
   pages={153--205},
}

\bib{calk}{article}{
   author={Calkin, J. W.},
   title={Two-sided ideals and congruences in the ring of bounded operators
   in Hilbert space},
   journal={Ann. of Math. (2)},
   volume={42},
   date={1941},
   pages={839--873},
}
\bib{cohn}{article}{
   author={Cohn, P. M.},
   title={Some remarks on the invariant basis property},
   journal={Topology},
   volume={5},
   date={1966},
   pages={215--228},
}
\bib{kabi}{article}{
   author={Corti{\~n}as, Guillermo},
   title={The obstruction to excision in $K$-theory and in cyclic homology},
   journal={Invent. Math.},
   volume={164},
   date={2006},
   number={1},
   pages={143--173},
}

\bib{friendly}{article}{
   author={Corti{\~n}as, Guillermo},
   title={Algebraic v. topological $K$-theory: a friendly match},
   conference={
      title={Topics in algebraic and topological $K$-theory},
   },
   book={
      series={Lecture Notes in Math.},
      volume={2008},
      publisher={Springer},
      place={Berlin},
   },
   date={2011},
   pages={103--165},
}

\bib{whatever}{article}{
   author={Corti{\~n}as, Guillermo},
   title={Cyclic homology, tight crossed products, and small stabilizations},
status={Preprint},
}

\bib{corel}{article}{
author={Corti{\~n}as, Guillermo},
author={Ellis, Eugenia},
title={Isomorphism conjectures with proper coefficients},
eprint={arXiv:1108.5196v3},
}

\bib{biva}{article}{
   author={Corti{\~n}as, Guillermo},
   author={Thom, Andreas},
   title={Bivariant algebraic $K$-theory},
   journal={J. Reine Angew. Math.},
   volume={610},
   date={2007},
   pages={71--123},
}

\bib{cot}{article}{
   author={Corti{\~n}as, Guillermo},
   author={Thom, Andreas},
   title={Comparison between algebraic and topological $K$-theory of locally
   convex algebras},
   journal={Adv. Math.},
   volume={218},
   date={2008},
   number={1},
   pages={266--307},
}

\bib{galgtop}{article}{
   author={Corti{\~n}as, Guillermo},
   author={Thom, Andreas},
   title={Algebraic geometry of topological spaces I.},
}
\bib{cmr}{book}{
   author={Cuntz, Joachim},
   author={Meyer, Ralf},
   author={Rosenberg, Jonathan M.},
   title={Topological and bivariant $K$-theory},
   series={Oberwolfach Seminars},
   volume={36},
   publisher={Birkh\"auser Verlag},
   place={Basel},
   date={2007},
   pages={xii+262},
}
\bib{cq}{article}{
   author={Cuntz, Joachim},
   author={Quillen, Daniel},
   title={Excision in bivariant periodic cyclic cohomology},
   journal={Invent. Math.},
   volume={127},
   date={1997},
   number={1},
   pages={67--98},
}

\bib{kenetal2}{article}{
   author={Dykema, Ken},
   author={Figiel, Tadeusz},
   author={Weiss, Gary},
   author={Wodzicki, Mariusz},
   title={Commutator structure of operator ideals},
   journal={Adv. Math.},
   volume={185},
   date={2004},
   number={1},
   pages={1--79},
}

\bib{ruy}{article}{
   author={Exel, Ruy},
   title={Inverse semigroups and combinatorial $C\sp \ast$-algebras},
   journal={Bull. Braz. Math. Soc. (N.S.)},
   volume={39},
   date={2008},
   number={2},
   pages={191--313},
}

\bib{garling}{article}{
   author={Garling, D. J. H.},
   title={On ideals of operators in Hilbert space},
   journal={Proc. London Math. Soc. (3)},
   volume={17},
   date={1967},
   pages={115--138},
}

\bib{hig}{article}{
   author={Higson, Nigel},
   title={Algebraic $K$-theory of stable $C\sp *$-algebras},
   journal={Adv. in Math.},
   volume={67},
   date={1988},
   number={1},
   pages={140},
}

\bib{kv}{article}{
   author={Karoubi, Max},
   author={Villamayor, Orlando},
   title={$K$-th\'eorie alg\'ebrique et $K$-th\'eorie topologique. I},
   journal={Math. Scand.},
   volume={28},
   date={1971},
   pages={265--307 (1972)},
}




\bib{roshan}{article}{
   author={Rosenberg, Jonathan},
   title={Comparison between algebraic and topological $K$-theory for Banach
   algebras and $C\sp *$-algebras},
   conference={
      title={Handbook of $K$-theory. Vol. 1, 2},
   },
   book={
      publisher={Springer},
      place={Berlin},
   },
   date={2005},
   pages={843--874},
}

\bib{simon}{book}{
   author={Simon, Barry },
   title={Trace Ideals and their Applications: Second Edition},
   series={Mathematical Surveys and Monographs},
   volume={120},
   publisher={AMS},
   date={2005},
}

\bib{sw1}{article}{
   author={Suslin, Andrei A.},
   author={Wodzicki, Mariusz},
   title={Excision in algebraic $K$-theory},
   journal={Ann. of Math. (2)},
   volume={136},
   date={1992},
   number={1},
   pages={51--122},
}

\bib{wag}{article}{
   author={Wagoner, J. B.},
   title={Delooping classifying spaces in algebraic $K$-theory},
   journal={Topology},
   volume={11},
   date={1972},
   pages={349--370},
}

\bib{kh}{article}{
   author={Weibel, Charles A.},
   title={Homotopy algebraic $K$-theory},
   conference={
      title={Algebraic $K$-theory and algebraic number theory (Honolulu, HI,
      1987)},
   },
   book={
      series={Contemp. Math.},
      volume={83},
      publisher={Amer. Math. Soc.},
      place={Providence, RI},
   },
   date={1989},
   pages={461--488},
}
\bib{wodk}{article}{
   author={Wodzicki, Mariusz},
   title={Algebraic $K$-theory and functional analysis},
   conference={
      title={First European Congress of Mathematics, Vol.\ II},
      address={Paris},
      date={1992},
   },
   book={
      series={Progr. Math.},
      volume={120},
      publisher={Birkh\"auser},
      place={Basel},
  },
   date={1994},
   pages={485--496},
}

\end{biblist}
\end{bibdiv}
\end{document}